\newcolumntype{P}[1]{>{\centering\arraybackslash}p{#1}}
\newtheorem*{corollary*}{Corollary}
\newtheorem{theorem}{Theorem}[section]
\newtheorem{lemma}[theorem]{Lemma}
\newtheorem{proposition}[theorem]{Proposition}
\newtheorem{corollary}[theorem]{Corollary}
\newtheorem{definition-theorem}[theorem]{Definition-Theorem}
\theoremstyle{definition}
\newtheorem{example}[theorem]{Example}
\theoremstyle{remark}
\newtheorem{remark}[theorem]{Remark}
\def\l@subsection{\@tocline{2}{0pt}{2.5pc}{5pc}{}} 
\numberwithin{equation}{section}
\title{On the rank of the Thurston pullback map}
\author{Khashayar Filom}
\address{Khashayar Filom, Department of Mathematics, University of Michigan;
 530 Church St, Ann Arbor, MI 48109,
USA}
\email{filom@umich.edu}
\date{}
\begin{document}
\vspace*{-1cm}
\maketitle
{\centering\footnotesize  Dedicated to Professor Mehrdad Shahshahani.\par}
\begin{abstract}
Under some mild assumptions, an orientation-preserving branched covering map of marked $2$-spheres induces a pullback map between the corresponding  Teichm\"uller spaces. 
By analyzing the associated pushforward operator acting on integrable quadratic differentials,  we obtain a global lower bound on the rank of the derivative of the pullback map in terms of the action of the cover on the marked points. In the dynamical context, the two sets of marked points in the target and source coincide with the postcritical set. Investigating the resulting pullback map is the central part of Thurston's topological characterization of postcritically finite rational maps.  
Postcritically finite maps with constant pullback have been studied by various authors. In that direction, our approach provides upper bounds on the size of the postcritical set of a map with constant pullback, and shows that the postcritical dynamics is highly restricted.  
\end{abstract}

\section{Introduction}
\subsection{Motivation and summary of results}
The goal of this paper is to provide lower bounds for the rank of certain maps between Teichmüller spaces modeled on marked $2$-spheres, and also to study cases where these maps are constant. These Teichmüller maps are associated with branched covering maps between spheres through a construction that appears in the seminal work of W. Thurston on topological characterization of rational maps (see the paper \cite{MR1251582} by Douady and Hubbard for an account of this theory).  To elaborate, consider an orientation-preserving branched cover 
$f:\left(S^2,A\right)\rightarrow\left(S^2,B\right)$. The cover is called to be \textit{admissible} if 
\begin{itemize}
\item $A$ and  $B$ are finite subsets of the sphere each with at least three points; 
\item $f(A)\subseteq B$;
\item $B$ contains the critical values of $f$.  
\end{itemize}
The corresponding \textit{Thurston pullback map} is denoted by 
$\sigma_f:\mathcal{T}\left(S^2,B\right)\rightarrow\mathcal{T}\left(S^2,A\right)$ and satisfies the following: 
if the map $\sigma_f$ takes a Teichm\"uller point $\tau$ to another Teichm\"uller point $\tau'$, then any two homeomorphisms $\phi$ and $\phi'$ representing $\tau$ and $\tau'$ fit in a commutative diagram 
\begin{equation}\label{diagram}
\xymatrixcolsep{4pc}\xymatrix{\left(S^2,A\right)\ar[r]^{\phi'}\ar[d]_f &\left(\hat{\Bbb{C}},\phi'(A)\right)\ar[d]^g\\ 
\left(S^2,B\right)\ar[r]_\phi &\left(\hat{\Bbb{C}},\phi(B)\right)}
\end{equation}
where  $g:\hat{\Bbb{C}}\rightarrow\hat{\Bbb{C}}$ is a suitable rational map (see \cite{MR2508269,MR3107522} for details of this construction).

\begin{theorem}\label{rank bound}
With the notation as above, at any point $\tau$ of the Teichm\"uller space, the rank of the derivative ${\rm{D}}\sigma_f(\tau)$ admits the lower bound 
\begin{equation}\label{inequality1}
{\rm{rank}}\, {\rm{D}}\sigma_f(\tau)\geq 
\min\left\{\ell_1+\ell_2,|A|-3\right\}
\end{equation}
where 
\begin{equation}\label{ell1}
\ell_1:=\left|\{b\in B\mid b \text{ is not a critical value of } f \text{ and } |A\cap f^{-1}(b)|=1\}\right|,
\end{equation}
and 
\begin{equation}\label{ell2}
\ell_2:=\left|\left\{c\in S^2-A \,\Bigg|\,
\begin{array}{l}
c \text{ is  a } \text{simple critical point of  } f \text{ with } f^{-1}(f(c))\cap A=\emptyset\\
\text{and no  other point of } f^{-1}(f(c))\text{ is critical}. 
\end{array}
\right\}\right|.
\end{equation}
\end{theorem}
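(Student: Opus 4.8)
The plan is to reduce \eqref{inequality1} to a rank computation for a pushforward operator acting on integrable quadratic differentials on $\hat{\Bbb C}$, and then to the elementary theory of Lagrange interpolation.

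\textbf{Step 1: passage to the pushforward operator.} Fix $\tau\in\mathcal{T}(S^2,B)$, set $\tau'=\sigma_f(\tau)$, and choose representatives $\phi,\phi'$ and a rational map $g$ filling in the diagram \eqref{diagram}; write $A'=\phi'(A)$ and $B'=\phi(B)$. The cotangent spaces $T^*_{\tau}\mathcal{T}(S^2,B)$ and $T^*_{\tau'}\mathcal{T}(S^2,A)$ are canonically identified with the spaces $Q(\hat{\Bbb C},B')$ and $Q(\hat{\Bbb C},A')$ of meromorphic quadratic differentials on $\hat{\Bbb C}$ with at most simple poles, situated on $B'$, resp.\ $A'$ (these are automatically integrable); their dimensions are $|B|-3$ and $|A|-3$. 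By the standard description of the derivative of the Thurston map (\cite{MR1251582}; see also \cite{MR2508269,MR3107522}), the coderivative $\big(\mathrm{D}\sigma_f(\tau)\big)^{*}$ is the pushforward $g_{*}\colon Q(\hat{\Bbb C},A')\to Q(\hat{\Bbb C},B')$, which on the locus where $g$ is unramified and $q=\varphi\,dz^{2}$ is regular is given by $g_{*}\big(\varphi\,dz^{2}\big)=\big(\sum_{g(z)=w}\varphi(z)/g'(z)^{2}\big)\,dw^{2}$. The only possible poles of $g_{*}q$ lie over critical values of $g$ or over $g(A')$; by commutativity of \eqref{diagram} together with the admissibility hypotheses $B\supseteq\{\text{critical values of }f\}$ and $f(A)\subseteq B$, both of these finite sets are contained in $B'$, and Step 2 shows the poles there are simple, so $g_{*}$ indeed lands in $Q(\hat{\Bbb C},B')$. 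Since a linear map and its transpose have equal rank, $\mathrm{rank}\,\mathrm{D}\sigma_f(\tau)=\mathrm{rank}\,g_{*}$, so it suffices to prove $\mathrm{rank}\,g_{*}\ge\min\{\ell_1+\ell_2,|A|-3\}$.

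\textbf{Step 2: the residue of a pushforward.} Fix $w_0\in B'$ with $g^{-1}(w_0)=\{z_1,\dots,z_k\}$ of local degrees $d_1,\dots,d_k$. In local coordinates $s$ at $z_i$ and $t$ at $w_0$ for which $g(s)=s^{d_i}$, a quadratic differential $q=\big(\sum_{j\ge -1}a_j s^{j}\big)\,ds^{2}$ near $z_i$ pushes forward, after summing over the $d_i$ branches $s=\zeta t^{1/d_i}$ and using $\sum_{\zeta^{d_i}=1}\zeta^{m}=d_i\cdot[\,d_i\mid m\,]$, to $\tfrac{1}{d_i}\sum_{j\ge -1,\ d_i\mid j+2}a_j\,t^{(j+2)/d_i-2}\,dt^{2}$. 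The least exponent occurring is $-1$, attained only at $j=d_i-2$; in particular this contribution has at worst a simple pole at $w_0$, whose residue is a fixed nonzero multiple of $\mathrm{Res}_{z_i}q$ when $d_i=1$ (and this vanishes unless $z_i$ is a pole of $q$, i.e.\ $z_i\in A'$) and of the value $q(z_i)$ when $d_i=2$ (a functional of $q$ well defined up to a nonzero scalar, since then $z_i\notin A'$ and $q$ is regular at $z_i$). Now specialize. If $w_0=\phi(b)$ with $b$ counted by $\ell_1$, then $b$ is not a critical value, so every $d_i=1$, and exactly one $z_i$ — namely $\phi'(a)$ for the unique $a\in A\cap f^{-1}(b)$ — lies in $A'$; hence $\mathrm{Res}_{w_0}(g_{*}q)$ is a nonzero multiple of $\mathrm{Res}_{\phi'(a)}q$. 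If $w_0=\phi(f(c))$ with $c$ counted by $\ell_2$, then over $w_0$ the only ramified preimage is $\phi'(c)$, of local degree $2$, and no preimage of $w_0$ lies in $A'$; hence $\mathrm{Res}_{w_0}(g_{*}q)$ is a nonzero multiple of the value $q(\phi'(c))$.

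\textbf{Step 3: Lagrange interpolation.} Let $w_1,\dots,w_{\ell_1+\ell_2}\in B'$ be the points isolated in Step 2. These are pairwise distinct: an $\ell_1$-type point is not a critical value of $g$ while an $\ell_2$-type point is, distinct $\ell_1$-type $b$'s give distinct points, and distinct $\ell_2$-type $c$'s have distinct images $f(c)$ (otherwise one of them would be a further critical preimage of the other). Consider the composite
\[
Q(\hat{\Bbb C},A')\ \xrightarrow{\ g_{*}\ }\ Q(\hat{\Bbb C},B')\ \xrightarrow{\ \psi\mapsto(\mathrm{Res}_{w_l}\psi)_l\ }\ \Bbb C^{\ell_1+\ell_2}.
\]
By Step 2 this composite is $q\mapsto\big(\gamma_l L_l(q)\big)_l$ with each $\gamma_l\ne 0$, where each $L_l$ is either the residue of $q$ at a point $\phi'(a)\in A'$ ($a$ the unique $A$-preimage of an $\ell_1$-type $b$) or the value of $q$ at a point $\phi'(c)\notin A'$ ($c$ of $\ell_2$-type). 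Since distinct $\ell_1$-type $b$'s give distinct $a\in A$ and distinct $\ell_2$-type $c$'s are themselves distinct and avoid $A$, these $\ell_1+\ell_2$ points of $\hat{\Bbb C}$, and hence their $\phi'$-images, are pairwise distinct. Choose an affine coordinate $z$ on the source $\hat{\Bbb C}$ whose point at infinity lies outside $A'$ and outside the $\ell_2$-type preimages; then $Q(\hat{\Bbb C},A')=\big\{\,N(z)\,dz^{2}\big/\!\prod_{a\in A'}(z-a)\ :\ N\in\Bbb C[z],\ \deg N\le |A|-4\,\big\}$, of dimension $|A|-3$, and under this identification each $L_l$ becomes a nonzero multiple of the evaluation $N\mapsto N(x_l)$ at the corresponding point $x_l$. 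Evaluation at $\ell_1+\ell_2$ distinct points has rank $\min\{\ell_1+\ell_2,|A|-3\}$ on polynomials of degree at most $|A|-4$ (Lagrange interpolation); hence the composite above, and therefore $g_{*}$, has rank at least $\min\{\ell_1+\ell_2,|A|-3\}$, which is \eqref{inequality1}.

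The crux is Step 2: one must verify that the pushforward of an integrable quadratic differential stays integrable (poles remain simple and remain inside $B'$) and pin down exactly which jet of $q$ controls $\mathrm{Res}_{w_0}(g_{*}q)$ — in particular that a simple pole of $q$ lying over a genuinely ramified point contributes nothing, while a simple critical preimage promotes the \emph{value} of $q$ to a residue of $g_{*}q$. Once the relevant functionals are recognized as point-evaluations, Step 3 is routine; the only mild care needed there is the coordinate choice making all $\ell_1+\ell_2$ test points finite so that the quadratic differentials acquire the stated rational normal form.
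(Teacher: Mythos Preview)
Your proof is correct, and it takes a cleaner route than the paper's. Both arguments begin by identifying the coderivative with the pushforward $g_*$ on integrable quadratic differentials, and both hinge on the same local computation: over an $\ell_1$-type point the residue of $g_*q$ picks out the residue of $q$ at the lone marked preimage, while over an $\ell_2$-type point the simple critical preimage promotes the \emph{value} of $q$ there to a residue of $g_*q$ (this is the paper's Lemma~\ref{technical}). The organizational difference is that the paper bounds $\dim\ker g_*$ from above --- it studies the set $X\subseteq\phi'(A)$ of poles occurring among kernel elements, argues via a growth-rate analysis that $|X|\le |A|-\ell_1$ and that every numerator polynomial must vanish at the $\ell_2$ critical points, and then subtracts --- whereas you bound ${\rm rank}\,g_*$ from below directly, by composing with the $\ell_1+\ell_2$ residue functionals and recognizing the composite (in the $N$-parametrization) as evaluation at $\ell_1+\ell_2$ distinct points. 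Your approach avoids the auxiliary choice of a single kernel differential with maximal pole set and the limiting argument along a ray $z=t+v_*$; the price is only the mild coordinate bookkeeping in Step~3 to make all test points finite. Either way the same bound $\min\{\ell_1+\ell_2,\,|A|-3\}$ drops out, but your packaging via Lagrange interpolation is more transparent about why.
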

\noindent
Recall that  a critical point $c$ is called \textit{simple} (nondegenerate) if the multiplicity of $f$ at it is two (i.e. $f$ is two-to-one near $c$).  Notice that  the sets from \eqref{ell1} and \eqref{ell2} are in a sense complementary: in the former, $b$ is not a critical value of $f$ while in the latter, $f(c)$ is a critical value. 

The lower bound $\ell_1$ for the rank of the derivative clearly puts restrictions on admissible covers whose pullback maps become of rank zero at a point. 
\begin{corollary}\label{ell1 corollary}
With the notation as above, if the rank of the pullback map $\sigma_f$ is zero at a point and $|A|\geq 4$, then for any $a\in A$ either $f(a)$ is a critical value or there exists another element $a'$ of $A$ for which $f(a)=f(a')$.
\end{corollary}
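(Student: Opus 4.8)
The plan is to deduce the corollary directly from Theorem \ref{rank bound} by a short contrapositive argument, using only the term $\ell_1$. First I would note that the hypothesis $|A|\geq 4$ gives $|A|-3\geq 1>0$, so at the point $\tau$ where the rank vanishes the inequality \eqref{inequality1} reads
\[
0=\mathrm{rank}\,\mathrm{D}\sigma_f(\tau)\geq\min\{\ell_1+\ell_2,\,|A|-3\}.
\]
Since the second entry of the minimum is strictly positive, the minimum must be achieved by the first entry, i.e. $\ell_1+\ell_2\leq 0$. As $\ell_1$ and $\ell_2$ are cardinalities of finite sets they are nonnegative, so this forces $\ell_1=\ell_2=0$; for the corollary I only need $\ell_1=0$.

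Next I would unwind the definition \eqref{ell1} of $\ell_1$. Arguing by contradiction, suppose there is an $a\in A$ such that $f(a)$ is not a critical value of $f$ and no other element $a'\in A$ satisfies $f(a')=f(a)$. Put $b:=f(a)$; by admissibility $f(A)\subseteq B$, so $b\in B$. By assumption $b$ is not a critical value of $f$, and the fiber $f^{-1}(b)$ meets $A$ in $a$ but in no other point, so $|A\cap f^{-1}(b)|=1$. Hence $b$ belongs to the set counted in \eqref{ell1}, giving $\ell_1\geq 1$ and contradicting $\ell_1=0$. Therefore the asserted dichotomy holds for every $a\in A$.

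The corollary is thus an immediate formal consequence of Theorem \ref{rank bound}, so there is no genuine obstacle; the only points needing a moment's care are the bookkeeping that $\min\{\ell_1+\ell_2,|A|-3\}=0$ together with $|A|-3\geq 1$ really forces $\ell_1=0$ (not merely that $\ell_1+\ell_2$ is small), and the trivial but essential observation that $f(a)\in B$, so that $f(a)$ is eligible to be counted by $\ell_1$ when the dichotomy fails.
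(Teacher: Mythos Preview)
Your proof is correct and essentially identical to the paper's: both argue by contradiction, observe that a failure of the dichotomy at some $a\in A$ forces $\ell_1\geq 1$, and then invoke Theorem~\ref{rank bound} together with $|A|-3\geq 1$ to conclude the rank is at least $1$. The only cosmetic difference is that the paper drops $\ell_2$ immediately (using $\mathrm{rank}\,\mathrm{D}\sigma_f\geq\min\{\ell_1,|A|-3\}$) rather than first deducing $\ell_1+\ell_2=0$ as you do.
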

\noindent
Also in the case of the derivative vanishing at a point, we derive  an upper bound in terms of the degree for the number of marked points in the domain of $f$. 
\begin{theorem}\label{degree bound}
With the notation as above and denoting the degree of $f$ by $d$, the derivative of $\sigma_f$ does not vanish at any point of $\mathcal{T}\left(S^2,B\right)$ unless 
$|A|\leq d+2$. 
Moreover, if there exists a critical value $v$ of $f$ such that $f^{-1}(v)\cap A=\emptyset$ and 
all critical points in $f^{-1}(v)$ are simple,  then the inequality can be improved to
$$
|A|\leq\#\left(\text{critical points in } f^{-1}(v)\right)+2.
$$
\end{theorem}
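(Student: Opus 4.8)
The plan is to exploit the standard description of the codifferential of $\sigma_f$ as a pushforward operator on integrable quadratic differentials — the very operator whose analysis underlies Theorem~\ref{rank bound} — and to estimate its rank directly at the level of residues. Fix $\tau$ and let $\phi,\phi'$ and $g$ be as in diagram~\eqref{diagram}, so $g\colon\hat{\Bbb{C}}\to\hat{\Bbb{C}}$ is a rational map of degree $d$ with the same local branching data as $f$ and with $g(\phi'(A))=\phi(f(A))\subseteq\phi(B)$. The codifferential of $\sigma_f$ at $\tau$ is the pushforward $g_*\colon Q(\hat{\Bbb{C}},\phi'(A))\to Q(\hat{\Bbb{C}},\phi(B))$ (a short local computation shows $g_*q$ has at worst simple poles, lying in $\phi(f(A))\cup(\text{critical values})\subseteq\phi(B)$), hence $\operatorname{rank}{\rm D}\sigma_f(\tau)=\operatorname{rank}g_*$, and it suffices to prove $g_*\not\equiv 0$ under the stated bounds on $|A|$. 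We may assume $|A|\ge 4$, since for $|A|=3$ the space $\mathcal{T}(S^2,A)$ is a point and $3\le d+2$. Normalizing so $\infty\notin\phi'(A)$, write each $q$ as $q=\dfrac{N(z)\,dz^2}{\prod_{a\in\phi'(A)}(z-a)}$ with $\deg N\le|A|-4$, which identifies $Q(\hat{\Bbb{C}},\phi'(A))$ with the space $\mathcal P$ of such polynomials.

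The key local input is an explicit residue formula: for $b\in\hat{\Bbb{C}}$ one has $\operatorname{res}_b(g_*q)=\sum_{z\in g^{-1}(b)}\psi_z(q)$, where $\psi_z(q)=\operatorname{Res}_z q/g'(z)$ if $z$ is a regular preimage lying in $\phi'(A)$, $\psi_z(q)=0$ if $z$ is a regular preimage outside $\phi'(A)$, and at a critical preimage $c$ of multiplicity $m$ the quantity $\psi_z(q)$ is a fixed linear combination of the expansion coefficients of $q$ at $c$ of order at most $m-1$ (at most $m-2$ if $c\notin\phi'(A)$) in which the top coefficient has nonzero weight; this comes from averaging $q/(g')^2$ over the $m$ sheets in a chart where $g$ reads $z\mapsto z^m$ and reading off the coefficient of $dw^2/w$. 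In the identification above, each expansion coefficient of $q$ at a point $p$ is a nonzero multiple of a value $N^{(j)}(p)$, and the total number of such data $N^{(j)}(p)$ entering $\operatorname{res}_b(g_*q)$ is at most $|R(b)|+\sum_{c\in C(b)}m_c=d$, where $R(b),C(b)$ are the regular and critical preimages of $b$.

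For the first bound, pick any $b\in f(A)$ (so $g^{-1}(b)\cap\phi'(A)\ne\emptyset$) and a preimage $z_0\in g^{-1}(b)\cap\phi'(A)$, regular if one exists, otherwise critical of some multiplicity $m_0$; let $L$ denote the top datum at $z_0$, namely $N\mapsto N(z_0)$ (resp.\ $N\mapsto N^{(m_0-1)}(z_0)$), which enters $\operatorname{res}_b(g_*q)$ with nonzero weight. The remaining data entering $\operatorname{res}_b(g_*q)$ — values $N^{(j)}(p)$ at preimages $p\ne z_0$, together with the lower-order data at $z_0$ — number at most $d-1$, hence all vanish on a polynomial $D$ of degree at most $d-1\le|A|-4$ obtained by imposing the corresponding vanishing orders; by construction $D$ does not vanish at $z_0$ to the order seen by $L$, so $L(D)\ne 0$. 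Taking $N=D\in\mathcal P$ collapses $\operatorname{res}_b(g_*q)$ to $(\text{nonzero weight})\cdot L(D)\ne 0$, whence $g_*\not\equiv 0$ as soon as $|A|\ge d+3$; this is the first assertion. For the refinement, let $v$ be a critical value with $g^{-1}(v)\cap\phi'(A)=\emptyset$ and all critical points $c_1,\dots,c_k$ over it simple. Then every preimage of $v$ contributes to $\operatorname{res}_v(g_*q)$ either $0$ or a nonzero multiple of $q(c_i)$, so $\operatorname{res}_v(g_*q)=\sum_{i=1}^k\gamma_i'\,N(c_i)$ with all $\gamma_i'\ne 0$; the evaluations at the $k$ distinct points $c_1,\dots,c_k$ are linearly independent on polynomials of degree $\le|A|-4$ once $|A|-4\ge k-1$, so this functional is nonzero when $|A|\ge k+3$, again giving $g_*\not\equiv 0$. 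Hence ${\rm D}\sigma_f$ vanishing somewhere forces $|A|\le d+2$, and $|A|\le k+2$ whenever such a $v$ exists. (The sub-cases $\ell_1\ge1$, and $k=1$ — i.e.\ $\ell_2\ge1$ — are already handled by Theorem~\ref{rank bound}, so one may if desired reduce to $\ell_1=\ell_2=0$ first, though the argument above does not require this.)

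The step demanding the most care is the local residue computation together with its bookkeeping: identifying precisely which expansion coefficient of $q$ at a critical point governs the residue of $g_*q$, verifying that its weight remains nonzero after passing from the standardizing chart $w=z^m$ to the global coordinate, and confirming the "at most $d$ data" count in every configuration of regular and critical preimages lying in or outside $\phi'(A)$. One should also double-check the degenerate situations (small $|A|$, a totally ramified critical value, or $k$ close to $d$) and make sure the $\tau$-independence is used correctly, since the argument in fact produces a nonvanishing $g_*$ for every rational $g$ realizing $f$.
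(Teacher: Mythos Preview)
Your argument is correct and follows a genuinely different route from the paper's own proof. Both proofs identify ${\rm D}^*\sigma_f(\tau)$ with the pushforward $g_*$ and normalize so that $Q^1(\hat{\Bbb C}-\phi'(A))$ is parametrized by polynomials $N$ of degree at most $|A|-4$, but from there they diverge.

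The paper evaluates $g_*q$ at a \emph{generic regular value} $z_*$. Taking $q$ to run over the family of four-pole differentials $\frac{{\rm d}z^2}{(z-u_1)(z-u_2)(z-u_t)}$ (with $t$ varying) produces a homogeneous linear system in the unknowns $\bigl(1/g'(w_j)\bigr)^2$ whose coefficient matrix is an explicit Cauchy matrix; its nonsingularity forces $|A|-3<d$. For the refinement, the paper lets $z_*\to v_*$ along a ray, invokes Lemma~\ref{technical} to isolate the $1/t$ coefficients, and repeats the Cauchy-determinant argument with the simple critical points $c_i$ in place of the regular preimages $w_j$.

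You instead compute the \emph{residue} of $g_*q$ at a single point $b$ (chosen in $g(\phi'(A))$ for the first part, at $v_*$ for the second). The local formula ${\rm res}_b(g_*q)=\sum_{z\in g^{-1}(b)}\psi_z(q)$, with $\psi_c$ depending on the $(m_c{-}2)$-jet of $q$ at $c$ (the paper's own post-Lemma~\ref{technical} remark records exactly this in the model chart), shows this residue is a linear functional in at most $d$ jet data of $N$, with the top datum at any chosen $z_0\in g^{-1}(b)\cap\phi'(A)$ entering with nonzero weight. A single explicit polynomial $D=\prod(z-p_i)^{e_i}$ of degree $\le d-1$ kills all the other data while keeping the top one nonzero, so $g_*\ne 0$ once $d-1\le|A|-4$. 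For the refinement the residue at $v_*$ collapses to $\sum_i\gamma_i' N(c_i)$ with all $\gamma_i'\ne 0$, and Vandermonde (rather than an explicit witness) finishes.

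What each approach buys: the paper's method is entirely elementary in its first half (no local analysis near critical points is needed --- one simply evaluates at a regular fiber), at the cost of the limiting argument via Lemma~\ref{technical} and the Cauchy-determinant computation. Your residue approach is more uniform --- the same local residue formula handles both parts --- and replaces the determinant calculation by a transparent dimension count; it also makes the role of the bound $d$ (as the total jet length over a fiber) more visible. The paper's Cauchy matrices, on the other hand, could in principle yield finer rank information beyond mere nonvanishing.
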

\noindent
The results above on the rank of $\sigma_f$ will be established in \S3 via studying its cotangent map ${\rm{D}}^*\sigma_f$ which can be identified with the pushforward action of $f$ on quadratic differentials. 

In the dynamical setting, one takes  $f:S^2\rightarrow S^2$ to be a \textit{Thurston map}; that is,  an orientation-preserving topological branched cover whose critical points (i.e. the points in a vicinity of which $f$ fails to be injective) are eventually periodic. Denoting the sets of critical points and critical values of $f$ by ${\rm{C}}_f$ and ${\rm{V}}_f:=f({\rm{C}}_f)$, the finite sets $A$ and $B$ are taken to be the \textit{postcritical set} ${\rm{P}}_f$ defined as 
$\bigcup_{n\geq 0}f^{\circ n}({\rm{V}}_f)$. 
Thurston's characterization states that, except for a handful of well-studied Euclidean examples, $f$ is combinatorially equivalent to a \textit{postcritically finite} (PCF) rational map -- a family of maps which is of great interest in complex dynamics -- if and only if it does not admit any \textit{obstructing multicurve} (see \cite{MR1251582}).
This is established by showing that rational maps equivalent to $f$ correspond to fixed points of the associated pullback map 
$\sigma_f:\mathcal{T}\left(S^2,{\rm{P}}_f\right)\rightarrow\mathcal{T}\left(S^2,{\rm{P}}_f\right)$, and then studying the fixed points of $\sigma_f$. In this dynamical setting, Corollary \ref{ell1 corollary} and  Theorem \ref{degree bound} mentioned above, together with a careful investigation of the combinatorics of the map $f\restriction_{{\rm{P}}_f}:{\rm{P}}_f\rightarrow {\rm{P}}_f$  in Proposition \ref{combinatorics}, yield the following:
\begin{corollary}\label{main corollary}
Let $f:S^2\rightarrow S^2$ be a Thurston map of degree $d$ with $|{\rm{P}}_f|\geq 4$ for which the derivative of the pullback map $\sigma_f$ vanishes at a point. 
Then  one has $|f^{-1}(p)\cap {\rm{P}}_f|\geq 2$ for any $p\in {\rm{P}}_f-{\rm{V}}_f$. Moreover:
\begin{equation}\label{inequality2}
|{\rm{P}}_f|\leq \min\left\{2\,|{\rm{V}}_f|, d+2\right\}.
\end{equation}
\end{corollary}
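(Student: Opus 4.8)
The plan is to specialize Corollary \ref{ell1 corollary} and Theorem \ref{degree bound} to the dynamical setting $A=B={\rm{P}}_f$ and then convert the resulting pointwise constraints on fibers into a global count, with the combinatorial description of $f\restriction_{{\rm{P}}_f}:{\rm{P}}_f\rightarrow{\rm{P}}_f$ (Proposition \ref{combinatorics}) serving as bookkeeping. Note first that the hypothesis ``${\rm{D}}\sigma_f$ vanishes at a point'' says precisely that ${\rm{rank}}\,{\rm{D}}\sigma_f=0$ there, so both Corollary \ref{ell1 corollary} (legitimate since $|{\rm{P}}_f|\geq 4$) and Theorem \ref{degree bound} are applicable.

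For the first assertion, recall that ${\rm{P}}_f=\bigcup_{n\geq 0}f^{\circ n}({\rm{V}}_f)$ yields the decomposition ${\rm{P}}_f={\rm{V}}_f\cup f({\rm{P}}_f)$; hence every $p\in{\rm{P}}_f-{\rm{V}}_f$ equals $f(a)$ for some $a\in{\rm{P}}_f$. Since $p$ is not a critical value, the first option in Corollary \ref{ell1 corollary} is excluded for this $a$, so there is $a'\in{\rm{P}}_f$ with $a'\neq a$ and $f(a')=p$. Therefore $|f^{-1}(p)\cap{\rm{P}}_f|\geq 2$ for every $p\in{\rm{P}}_f-{\rm{V}}_f$, which is the first claim.

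Writing $g:=f\restriction_{{\rm{P}}_f}$ and summing fiber cardinalities over the target now gives
\[
|{\rm{P}}_f|=\sum_{q\in{\rm{P}}_f}\bigl|g^{-1}(q)\bigr|\;\geq\;\sum_{q\in{\rm{P}}_f-{\rm{V}}_f}\bigl|g^{-1}(q)\bigr|\;\geq\;2\bigl(|{\rm{P}}_f|-|{\rm{V}}_f|\bigr),
\]
using ${\rm{V}}_f\subseteq{\rm{P}}_f$ together with the fiber bound just proved; this rearranges to $|{\rm{P}}_f|\leq 2\,|{\rm{V}}_f|$. Independently, since ${\rm{D}}\sigma_f$ vanishes somewhere on $\mathcal{T}\left(S^2,{\rm{P}}_f\right)$, Theorem \ref{degree bound} applied with $A=B={\rm{P}}_f$ yields $|{\rm{P}}_f|\leq d+2$. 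Taking the minimum of the two bounds establishes \eqref{inequality2}.

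The delicate point is not the arithmetic but arranging matters so that Corollary \ref{ell1 corollary} can be invoked uniformly along the fibers of $f\restriction_{{\rm{P}}_f}$ while correctly tracking which postcritical points are themselves critical points or critical values — this is the role of Proposition \ref{combinatorics}, and once it is in place the count above needs no case analysis. I would still separately verify the degenerate situation ${\rm{P}}_f={\rm{V}}_f$ (where the first assertion is vacuous and $|{\rm{P}}_f|\leq 2|{\rm{V}}_f|$ is automatic), and note that non-surjectivity of $f\restriction_{{\rm{P}}_f}$ is harmless, since any empty fibers occur over points of ${\rm{V}}_f$ and merely drop out of the displayed sum.
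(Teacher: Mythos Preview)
Your argument is correct and follows the same overall strategy as the paper: apply Corollary~\ref{ell1 corollary} (with $A=B={\rm{P}}_f$) to obtain the fiber condition, apply Theorem~\ref{degree bound} for the bound $|{\rm{P}}_f|\leq d+2$, and then deduce $|{\rm{P}}_f|\leq 2|{\rm{V}}_f|$ from the fiber condition.

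The one noteworthy difference is in this last step. The paper delegates it to Proposition~\ref{combinatorics}, which analyzes the functional graph $\mathcal{G}_f$ component by component (cycles plus attached rooted trees) and, as a byproduct, characterizes the equality case $|{\rm{P}}_f|=2|{\rm{V}}_f|$. Your displayed one-line count
\[
|{\rm{P}}_f|=\sum_{q\in{\rm{P}}_f}|g^{-1}(q)|\ \geq\ \sum_{q\in{\rm{P}}_f-{\rm{V}}_f}|g^{-1}(q)|\ \geq\ 2\bigl(|{\rm{P}}_f|-|{\rm{V}}_f|\bigr)
\]
is exactly the indegree identity $\sum_p\deg^-(p)=|{\rm{P}}_f|$ stripped of the graph language, and it already suffices for the inequality claimed in Corollary~\ref{main corollary}. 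So your route is slightly more economical here; the paper's Proposition~\ref{combinatorics} is doing extra work (the equality characterization) that is used elsewhere but is not needed for this corollary. Your closing paragraph somewhat overstates the role of Proposition~\ref{combinatorics}: your own counting argument makes it unnecessary, and the ``delicate point'' you flag is already handled by the observation ${\rm{P}}_f-{\rm{V}}_f\subseteq f({\rm{P}}_f)$.
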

\noindent
A very special case of having a fixed point is when 
$\sigma_f:\mathcal{T}\left(S^2,{\rm{P}}_f\right)\rightarrow\mathcal{T}\left(S^2,{\rm{P}}_f\right)$
is constant. Thurston maps with constant pullback have been of interest in the literature \cite{MR2508269,MR2958932,MRCui,MR3692494,MR3456156,MRArturo,MR3994789}. The corollary above restricts both the size and the dynamics on the postcritical set for such maps. The lower bound $\ell_2$ for the rank of the pullback can be used to exclude certain Thurston maps from this family, e.g. maps for which the subset of $A=B={\rm{P}}_f$  appearing in \eqref{ell2} is nonvacuous. 
\begin{corollary}\label{ell2 corollary}
Let $f$ be a Thurston map with $|{\rm{P}}_f|\geq 4$ admitting a strictly preperiodic critical value $v$ which is not in the forward orbit of any other critical value, i.e. $v\notin f({\rm{P}}_f)$. Suppose there is only one critical point in $f^{-1}(v)$, and that critical point is simple. Then the pullback map $\sigma_f$ is not constant.   
\end{corollary}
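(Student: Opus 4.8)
The plan is to apply Theorem~\ref{rank bound} to the admissible cover $f:\left(S^2,{\rm{P}}_f\right)\rightarrow\left(S^2,{\rm{P}}_f\right)$ --- admissible because $|{\rm{P}}_f|\geq 4\geq 3$, $f({\rm{P}}_f)\subseteq{\rm{P}}_f$, and ${\rm{V}}_f\subseteq{\rm{P}}_f$ --- and to show that the invariant $\ell_2$ from \eqref{ell2} (taken with $A={\rm{P}}_f$) satisfies $\ell_2\geq 1$. Since $|{\rm{P}}_f|\geq 4$ also gives $|{\rm{P}}_f|-3\geq 1$, the bound \eqref{inequality1} then yields ${\rm{rank}}\,{\rm{D}}\sigma_f(\tau)\geq\min\{\ell_1+\ell_2,\,|{\rm{P}}_f|-3\}\geq 1$ at every point $\tau$, so $\sigma_f$ has nowhere-vanishing derivative and in particular cannot be constant.

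To obtain $\ell_2\geq 1$, I would verify that the unique critical point $c$ lying above $v$ belongs to the set defining $\ell_2$. The first move is to unpack the hypothesis $v\notin f({\rm{P}}_f)$: it says that no point of ${\rm{P}}_f$ is mapped to $v$ by $f$, i.e.\ $f^{-1}(v)\cap{\rm{P}}_f=\emptyset$. Hence $c\in S^2-{\rm{P}}_f$ (otherwise $v=f(c)\in f({\rm{P}}_f)$), and $f^{-1}(f(c))\cap{\rm{P}}_f=f^{-1}(v)\cap{\rm{P}}_f=\emptyset$. By hypothesis $c$ is a simple critical point of $f$, and it is the only critical point of $f$ in $f^{-1}(v)=f^{-1}(f(c))$, so no other point of that fiber is critical. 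These are precisely the three clauses in the definition of the set in \eqref{ell2}, so $c$ is one of its elements and $\ell_2\geq 1$.

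I do not expect a genuine obstacle here: the whole weight of the statement rests on Theorem~\ref{rank bound}, and the corollary is essentially the remark that the hypotheses imposed on $v$ are engineered exactly to populate the set defining $\ell_2$, together with the numerical input $|{\rm{P}}_f|-3\geq 1$. The only points deserving care are the bookkeeping translation of $v\notin f({\rm{P}}_f)$ into $f^{-1}(v)\cap{\rm{P}}_f=\emptyset$ and the resulting confirmation that $c\notin{\rm{P}}_f$. (One may also note that the hypothesis ``strictly preperiodic'' is redundant: a periodic critical value $v$ would satisfy $v=f\!\left(f^{\circ(k-1)}(v)\right)$ with $f^{\circ(k-1)}(v)\in{\rm{P}}_f$, forcing $v\in f({\rm{P}}_f)$ and contradicting the assumption; stating it, however, does no harm.)
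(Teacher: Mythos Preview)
Your proof is correct and follows essentially the same route as the paper: verify that the unique critical point $c$ above $v$ lies in the set defining $\ell_2$ (using $v\notin f({\rm{P}}_f)$ to get $f^{-1}(v)\cap{\rm{P}}_f=\emptyset$), and then invoke Theorem~\ref{rank bound} together with $|{\rm{P}}_f|-3\geq 1$. Your write-up is in fact more careful in spelling out the admissibility check and the redundancy of ``strictly preperiodic'', but the argument is the same.
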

\noindent
By a trick due to McMullen (\cite{MR2508269}), one can obtain examples of maps with constant Thurston pullback through composing maps of lower degrees. An interesting example with constant pullback which cannot be obtained in this manner is a quartic rational map with three critical values introduced in \cite[Appendix D]{MRArturo}.  We shall utilize our results to investigate if the pullback can be constant for Thurston maps
with at most three critical values, or for Thurston maps of degree three. 
It turns out that if $f$ is \textit{bicritical} (i.e. has only two critical points) or \textit{cubic} (i.e. $\deg f=3$), 
the pullback map $\sigma_f:\mathcal{T}\left(S^2,{\rm{P}}_f\right)\rightarrow\mathcal{T}\left(S^2,{\rm{P}}_f\right)$
is not constant
except  in the dull case of $|{\rm{P}}_f|=3$ where $\dim\mathcal{T}\left(S^2,{\rm{P}}_f\right)=0$.
\begin{theorem}\label{bicritical or cubic}
Let $f$ be a Thurston map with at least four postcritical points. 
The pullback map $\sigma_f$ is not constant if $f$ is bicritical or cubic. 
\end{theorem}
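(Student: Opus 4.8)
The plan is to argue by contradiction. If $\sigma_f$ is constant then ${\rm{D}}\sigma_f$ vanishes identically, hence at some point, so Corollary \ref{main corollary} applies and yields $|{\rm{P}}_f|\le\min\{2|{\rm{V}}_f|,\,d+2\}$ together with the dynamical constraint $|f^{-1}(p)\cap{\rm{P}}_f|\ge 2$ for every $p\in{\rm{P}}_f\setminus{\rm{V}}_f$. The idea is then, separately in the two regimes, to enumerate the finitely many ramification portraits and postcritical-dynamics types consistent with these inequalities and with $|{\rm{P}}_f|\ge 4$, and to eliminate each, either by re-applying the rank estimates of Theorem \ref{rank bound}, Corollary \ref{ell1 corollary}, and Corollary \ref{ell2 corollary} (fed by the combinatorial information of Proposition \ref{combinatorics}), or, in the residual cases where those estimates degenerate to the trivial bound, by a direct computation of the cotangent map ${\rm{D}}^{*}\sigma_f$, i.e.\ of the pushforward $f_*$ on integrable quadratic differentials.

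Consider first the bicritical case. By Riemann--Hurwitz both critical points are totally ramified of local degree $d$, so $|{\rm{V}}_f|\le 2$ and each critical value has a single preimage. If $|{\rm{V}}_f|=1$ then $|{\rm{P}}_f|\le 2$, contradicting $|{\rm{P}}_f|\ge 4$; hence $|{\rm{V}}_f|=2$ and $|{\rm{P}}_f|=4$. Writing ${\rm{P}}_f\setminus{\rm{V}}_f=\{p_1,p_2\}$ and using $\sum_{p\in{\rm{P}}_f}|f^{-1}(p)\cap{\rm{P}}_f|=|{\rm{P}}_f|=4$ while the contributions of $p_1$ and $p_2$ already total at least $4$, one sees that the contributions of the two critical values must vanish: neither critical point lies in ${\rm{P}}_f$ and neither critical value lies in $f({\rm{P}}_f)$, so in particular both critical values are strictly preperiodic. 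For $d=2$ the unique critical point above each critical value is simple, so Corollary \ref{ell2 corollary} applies verbatim (equivalently, the refined inequality of Theorem \ref{degree bound} forces $|{\rm{P}}_f|\le 3$), contradicting constancy. For $d\ge 3$ the critical points are not simple, $\ell_1=\ell_2=0$, and the rank estimates become vacuous; here I would invoke the normal form in which, after a Möbius change of coordinates placing the critical points at $0$ and $\infty$, one has $f\colon z\mapsto M(z^{d})$ for a Möbius transformation $M$, and compute $f_*=M_*\circ(z\mapsto z^{d})_*$ on the one-dimensional space of integrable quadratic differentials with at most simple poles along ${\rm{P}}_f$, checking that it is not the zero operator for any admissible placement of the four marked points.

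In the cubic case $d=3$ gives $|{\rm{P}}_f|\le 5$, so $|{\rm{P}}_f|\in\{4,5\}$, and by Riemann--Hurwitz the ramification portrait is one of: two triple critical points (bicritical, already settled); one triple and two simple critical points; or four simple critical points. Since a degree-$3$ map cannot carry two critical points over a single value, in the latter two portraits each critical value carries exactly one simple critical point, the other point of its fibre being unramified. Using the combinatorics of $f\restriction_{{\rm{P}}_f}$ from Proposition \ref{combinatorics}, the plan is to observe that a simple critical point $c$ with $f^{-1}(f(c))\cap{\rm{P}}_f=\emptyset$ contributes to $\ell_2$, while a non-critical value $b\in{\rm{P}}_f$ with a singleton ${\rm{P}}_f$-fibre contributes to $\ell_1$, and to show that in every surviving portrait $\ell_1+\ell_2\ge|{\rm{P}}_f|-3$, so that Theorem \ref{rank bound} yields ${\rm{rank}}\,{\rm{D}}\sigma_f\ge|{\rm{P}}_f|-3\ge 1$. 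The delicate configurations are precisely those in which $f$ permutes ${\rm{P}}_f$ --- then ${\rm{P}}_f\setminus{\rm{V}}_f=\emptyset$ and every fibre over a critical value meets ${\rm{P}}_f$, so both $\ell_1$ and $\ell_2$ vanish; for those one must again go back to the operator $f_*$ on the low-dimensional space $Q(\hat{\mathbb{C}},{\rm{P}}_f)$ (invoking, where the orbifold is Euclidean, the classical fact that $\sigma_f$ is then an affine self-map of Teichmüller space with nonzero linear part, hence non-constant).

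In short, the skeleton is: constancy $\Rightarrow$ Corollary \ref{main corollary} $\Rightarrow$ a short list of ramification/dynamics portraits $\Rightarrow$ contradiction via the rank estimates, with two residual families --- the higher-degree bicritical maps and the cubic maps permuting their postcritical set --- dispatched by a direct pushforward computation. I expect the heaviest step to be the portrait bookkeeping in the cubic case (tracking compatibility with $f({\rm{P}}_f)\subseteq{\rm{P}}_f$, ${\rm{V}}_f\subseteq{\rm{P}}_f$, and the $\ge 2$-fibre constraint), while the genuinely delicate step, and the main obstacle, is the $d\ge 3$ bicritical analysis: there every stated lower bound degenerates and one is thrown back on verifying that the pushforward operator on quadratic differentials is nonzero.
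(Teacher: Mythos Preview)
Your overall architecture matches the paper's: assume constancy, invoke Corollary~\ref{main corollary} to bound $|{\rm P}_f|$ and force the fibre condition, then enumerate portraits. But the cubic analysis has a real gap.

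You assert that the ``delicate configurations'' in degree three are \emph{precisely} those where $f$ permutes ${\rm P}_f$, and that elsewhere $\ell_1+\ell_2\ge|{\rm P}_f|-3$. This is false. Consider the portrait with one triple critical point (over $v_3$) and two simple critical points (over $v_1,v_2$). The paper shows, using Corollaries~\ref{ell2 corollary} and~\ref{periodic bound}, that necessarily $v_3\notin f({\rm P}_f)$ while $v_1,v_2\in f({\rm P}_f)$, and an indegree count forces $|{\rm P}_f|=4$ with indegrees $1,1,0,2$ at $v_1,v_2,v_3,t$ respectively. Here $f$ does \emph{not} permute ${\rm P}_f$ (since $v_3$ has indegree zero), yet $\ell_1=\ell_2=0$: the only non-critical value $t$ has two ${\rm P}_f$-preimages, and the fibres over $v_1,v_2$ meet ${\rm P}_f$ while the critical point over $v_3$ is not simple. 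So Theorem~\ref{rank bound} gives nothing, and your fallback (direct pushforward computation) was only offered for the permuting case. The paper closes this case with a genuinely different tool, Proposition~\ref{Belyi}: since $|{\rm P}_f\cap f^{-1}({\rm V}_f)|=2$, a constant $\sigma_f$ would force $f$ to admit a nontrivial deck transformation, which a degree-three cover with three critical values cannot have.

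Two smaller points. For cubics with four simple critical points permuting ${\rm P}_f$, the paper does not compute $f_*$ but observes that $f$ is then a NET map of degree three and cites the known result that such maps have non-constant pullback. For the bicritical case with $d\ge 3$, your proposal to compute $f_*$ directly on the one-dimensional $Q^1$ is plausible but unexecuted; the paper instead writes the Hurwitz correspondence explicitly via the normal form $g(z)=\bigl((z+x)/(z+y)\bigr)^d$ and checks that the projection $\rho_1$ to the second moduli coordinate is generically two-to-one, hence non-constant. Either approach should work, but neither is as light as you suggest.
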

\noindent
For cubic Thurston maps, this result was previously proved by Koch through an extensive study of Hurwitz classes of ramified covers of degree three \cite{MRKoch}.

\subsection{Outline.}
Section 2 is devoted to the background material: \S2.1 briefly discusses Thurston's work in the dynamical context which is concerned with Thurston maps $f:\left(S^2,{\rm{P}}_f\right)\rightarrow\left(S^2,{\rm{P}}_f\right)$,
and also the more general nondynamical setting of admissible branched covers 
$f:\left(S^2,A\right)\rightarrow\left(S^2,B\right)$; \S2.2 reviews the literature on branched covers whose associated Thurston pullback map is constant. Our main results, Theorems \ref{rank bound} and \ref{degree bound}, are proved in respectively \S3.1 and \S3.2 through a careful analysis of the action of the pushforward operator on integrable quadratic differentials. The next subsection, \S3.3, establishes numerous corollaries to these theorems, in particular, Corollary \ref{main corollary}. Results from \S3 are invoked extensively in the final section, \S4, to investigate Thurston maps with constant pullback. The section begins by discussing Thurston maps with three critical values and bicritical Thurston maps in \S4.1 and \S4.2, and culminates with a proof of Theorem \ref{bicritical or cubic} in \S4.3.

\subsection{Notation and terminology}
Every branched cover is considered to be orientation-preserving and of degree $d\geq 2$. We often work with ramified covering maps $\left(S^2,A\right)\rightarrow\left(S^2,B\right)$ that are admissible, meaning that each of $A$ and $B$ is of size at least three, and $B$ includes the critical values of the cover. 
\begin{itemize}
\item $f^{\circ n}$: the $n^{\rm{th}}$ iterate of $f$;
\item $X\hookrightarrow Y$ and  $X\stackrel{inc}{\hookrightarrow}Y$: an injection from $X$ to $Y$ or the inclusion map if
$X\subseteq Y$;
\item $S^2$ and $\hat{\Bbb{C}}$: the $2$-sphere and the Riemann sphere $\hat{\Bbb{C}}=\Bbb{C}\cup\{\infty\}$;
\item $T(z_1,z_2,z_3,z_4)$: the cross-ratio of four pairwise distinct points $z_1,z_2,z_3,z_4$ of  $\hat{\Bbb{C}}$ defined as 
$T(z_1,z_2,z_3,z_4)=\frac{(z_1-z_2)(z_3-z_4)}{(z_1-z_3)(z_2-z_4)}$;
\item ${\rm{PSL}}_2(\Bbb{C})$: the group of Möbius transformations; 
\item $Q^1\left(\hat{\Bbb{C}}-C\right)$: the space of integrable holomorphic quadratic differentials on $\hat{\Bbb{C}}-C$, i.e. the space of those quadratic differentials that at worst have simple poles at the points of $C$;  
\item ${\rm{T}}_xM$ (${\rm{T}}^*_xM$): the tangent (resp. cotangent space) of a smooth manifold $M$ at a point $x$;
\item  ${\rm{D}}F(x)$ (${\rm{D}}^*F(x)$): the derivative ${\rm{T}}_xM\rightarrow{\rm{T}}_{F(x)}N$ (resp. the coderivative ${\rm{T}}^*_{F(x)}N\rightarrow{\rm{T}}^*_xM$) of a morphism $F:M\rightarrow N$ of smooth manifolds at a point $x\in M$; 
\item ${\rm{C}}_f$ and ${\rm{V}}_f$: the sets of critical points and critical values (branch points) of a branched cover $f$ respectively ($f({\rm{C}}_f)={\rm{V}}_f$);
\item ${\rm{P}}_f$: the postcritical set of a Thurston map $f:S^2\rightarrow S^2$ (a branched cover with eventually periodic critical values) defined as $\bigcup_{n\geq 0}f^{\circ n}({\rm{V}}_f)$;
\item $\mathcal{G}_f$: the functional graph capturing the postcritical dynamics 
$f\restriction_{{\rm{P}}_f}:{\rm{P}}_f\rightarrow{\rm{P}}_f$ of a Thurston map $f$; the vertices represent elements of ${\rm{P}}_f$ and there is an edge from $p$ to $q$ if $f(p)=q$;
\item ${\rm{deg}}^+(p)$ (${\rm{deg}}^-(p)$): the outdegree (resp. indegree) of a vertex $p$ of a directed graph; 
\item ${\rm{Deck}}(f)$: the group of deck transformations of a branched cover $f$;
\item ${\rm{mult}}_{p}(f)$: the multiplicity with which $f$ attains  the value $f(p)$ at $p$;
\item $\mathcal{T}\left(S^2,C\right)$: the Teichm\"uller space modeled on the marked sphere $\left(S^2,C\right)$;
\item $\mathcal{M}_{0,C}$: the moduli space of genus zero Riemann surfaces marked by $\left(S^2,C\right)$; that is, the space of injections $i:C\hookrightarrow\hat{\Bbb{C}}$ modulo postcomposition by Möbius transformations; 
\item $\sigma_f$: the Thurston pullback map 
$\mathcal{T}\left(S^2,B\right)\rightarrow\mathcal{T}\left(S^2,A\right)$ 
(resp. $\mathcal{T}\left(S^2,{\rm{P}}_f\right)\rightarrow\mathcal{T}\left(S^2,{\rm{P}}_f\right)$)
induced by $f:\left(S^2,A\right)\rightarrow\left(S^2,B\right)$ (resp. by a Thurston map 
$\left(S^2,{\rm{P}}_f\right)\rightarrow\left(S^2,{\rm{P}}_f\right)$);
\item $\mathcal{W}_f$ and $\mathcal{R}_f$: the Hurwitz space and the correspondence associated with a branched cover $f:\left(S^2,A\right)\rightarrow\left(S^2,B\right)$ (cf. \S2.1);
\item $\ell_1$ and $\ell_2$: quantities associated with a branched cover $f:\left(S^2,A\right)\rightarrow\left(S^2,B\right)$ as in \eqref{ell1} and \eqref{ell2};
\item $P\oplus Q$: the sum of points $P$ and $Q$ of an elliptic curve.

\end{itemize}

\section{Preliminaries and known results}
This section begins with a brief discussion on Thurston's theorem on topological characterization of rational maps which is contingent on a careful analysis of the corresponding pullback map between  Teichm\"uller spaces. The second subsection reviews some known results on branched covers with constant pullback and compares them with our results in \S4.

\subsection{The Thurston pullback map}
Let $f:S^2\rightarrow S^2$ be a Thurston map, an orientation-preserving branched cover whose critical values are preperiodic. 
We denote its set of critical values by ${\rm{V}}_f$ and its postcritical set $\bigcup_{n\geq 0}f^{\circ n}({\rm{V}}_f)$ by 
${\rm{P}}_f$. 
Given an orientation-preserving homeomorphism $\phi:S^2\rightarrow\hat{\Bbb{C}}$, one can pullback the complex structure of $\hat{\Bbb{C}}$ first via $\phi$ and then via the branched cover $f:S^2\rightarrow S^2$. The resulting complex structure on the domain of $f$ can be uniformized by a suitable orientation-preserving homeomorphism $\phi':S^2\rightarrow\hat{\Bbb{C}}$. We thus arrive at the commutative diagram
\begin{equation}\label{diagram1}
\xymatrixcolsep{4pc}\xymatrix{\left(S^2,{\rm{P}}_f\right)\ar[r]^{\phi'}\ar[d]_f &\left(\hat{\Bbb{C}},\phi'\left({\rm{P}}_f\right)\right)\ar[d]^g\\ 
\left(S^2,{\rm{P}}_f\right)\ar[r]_\phi &\left(\hat{\Bbb{C}},\phi\left({\rm{P}}_f\right)\right)}
\end{equation}
in which $g:\hat{\Bbb{C}}\rightarrow\hat{\Bbb{C}}$ is holomorphic, i.e. a rational map. 
Perturbing the marked homeomorphism $\phi:\left(S^2,{\rm{P}}_f\right)\rightarrow\left(\hat{\Bbb{C}},\phi\left({\rm{P}}_f\right)\right)$ within its isotopy class does not change the isotopy class of  the homeomorphism
$\phi':\left(S^2,{\rm{P}}_f\right)\rightarrow\left(\hat{\Bbb{C}},\phi'\left({\rm{P}}_f\right)\right)$
due to the homotopy lifting property. Therefore, by the construction above, the branched cover 
$f:\left(S^2,{\rm{P}}_f\right)\rightarrow\left(S^2,{\rm{P}}_f\right)$
induces a holomorphic map 
$\sigma_f:\mathcal{T}\left(S^2,{\rm{P}}_f\right)\rightarrow\mathcal{T}\left(S^2,{\rm{P}}_f\right)$
through the assignment $[\phi]\mapsto[\phi']$.
We call this the Thurston pullback map associated with the cover. Thurston's theorem elucidates when $f$ is \textit{Thurston equivalent} to a rational map, meaning that $f$ fits in a diagram such as \eqref{diagram1} in which homeomorphisms
 $\phi,\phi':S^2\rightarrow\hat{\Bbb{C}}$ agree on ${\rm{P}}_f$ and are isotopic through homeomorphisms agreeing on ${\rm{P}}_f$. 
This requires the rational map $g$ from \eqref{diagram1} to be postcritically finite; and in terms of the pullback map, it amounts to $[\phi]=[\phi']$ being a fixed point of $\sigma_f$. 
\begin{theorem}[\cite{MR1251582}]
Let $f$ be a Thurston map with hyperbolic orbifold. Then $f$ is equivalent to a PCF rational map
if and only if for any  $f$-stable multicurve $\Gamma$ the Thurston eigenvalue $\lambda_\Gamma$ is smaller than $1$.
Moreover, if $f$ is equivalent to a PCF rational map, such a map is unique up to Möbius conjugation (Thurston rigidity).   
\end{theorem}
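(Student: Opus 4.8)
The plan is to reduce the theorem to a statement about fixed points of the pullback map $\sigma_f:\mathcal{T}\left(S^2,{\rm{P}}_f\right)\rightarrow\mathcal{T}\left(S^2,{\rm{P}}_f\right)$ and then to analyze the dynamics of $\sigma_f$ under iteration. The construction recorded in \eqref{diagram1} already shows that a Teichm\"uller point $\tau$ with $\sigma_f(\tau)=\tau$ furnishes homeomorphisms $\phi,\phi'$ that agree (up to isotopy rel ${\rm{P}}_f$) and make the map $g$ holomorphic, hence a PCF rational map Thurston equivalent to $f$; conversely, a rational realization produces such a fixed point. Thus the theorem reduces to the assertion that $\sigma_f$ has a (necessarily unique) fixed point if and only if every $f$-stable multicurve $\Gamma$ satisfies $\lambda_\Gamma<1$.

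First I would pin down the metric behavior of $\sigma_f$ for the Teichm\"uller metric. Since $f(\,{\rm{P}}_f)\subseteq{\rm{P}}_f$ and ${\rm{V}}_f\subseteq{\rm{P}}_f$, one has ${\rm{P}}_f\subseteq f^{-1}({\rm{P}}_f)$ and $f$ restricts to an unbranched covering $S^2-f^{-1}({\rm{P}}_f)\rightarrow S^2-{\rm{P}}_f$. I would then factor $\sigma_f$ as the composition of (i) pullback of complex structure through this covering, which preserves the dilatation of Beltrami forms and so gives a holomorphic \emph{isometric} embedding $\mathcal{T}\left(S^2,{\rm{P}}_f\right)\hookrightarrow\mathcal{T}\left(S^2,f^{-1}({\rm{P}}_f)\right)$, followed by (ii) the forgetful projection $\mathcal{T}\left(S^2,f^{-1}({\rm{P}}_f)\right)\rightarrow\mathcal{T}\left(S^2,{\rm{P}}_f\right)$ that drops the extra marked points of $f^{-1}({\rm{P}}_f)-{\rm{P}}_f$. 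By Royden's theorem identifying the Teichm\"uller and Kobayashi metrics, the Schwarz lemma makes the forgetful map distance non-increasing, and it strictly shrinks the distance between distinct points once $f^{-1}({\rm{P}}_f)-{\rm{P}}_f\neq\emptyset$; the hyperbolic orbifold hypothesis is what excludes the degenerate Euclidean cases in which this strict contraction can fail. Consequently $\sigma_f$ is weakly contracting and strictly decreases the distance between any two distinct points.

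Thurston rigidity then follows at once: two distinct fixed points would be moved strictly closer, a contradiction, so the realizing rational map is unique up to the M\"obius ambiguity built into $\mathcal{T}\left(S^2,{\rm{P}}_f\right)$. For existence under the eigenvalue hypothesis I would iterate $\sigma_f$ from a basepoint and track the orbit $\tau_n=\sigma_f^{\circ n}(\tau_0)$ in $\mathcal{M}_{0,{\rm{P}}_f}$. If the orbit stays in a compact set, the contraction becomes uniform there and forces $\{\tau_n\}$ to be Cauchy, hence to converge to a fixed point, producing the rational realization. If instead the orbit leaves every compact set, the Mumford compactness criterion (thick--thin decomposition) yields simple closed curves whose extremal lengths tend to zero; by controlling how these lengths transform under the pullback via the Gr\"otzsch and extremal-length inequalities, I would organize the degenerating curves into a canonical $f$-stable multicurve $\Gamma$ and extract the bound $\lambda_\Gamma\geq 1$. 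Taking the contrapositive gives: if every $\lambda_\Gamma<1$, the orbit is bounded, so a fixed point exists.

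For the reverse implication I would show directly that an $f$-stable $\Gamma$ with $\lambda_\Gamma\geq 1$ obstructs rationality. Assuming a realizing rational map $g$, I would surround each curve of $\Gamma$ by an essential annulus, pull these annuli back through $g$, and invoke superadditivity of moduli under coverings to obtain a vector inequality $m\geq f_\Gamma\,m$ for the moduli vector $m$ indexed by $\Gamma$; iterating and comparing with the Perron--Frobenius eigenvector of $f_\Gamma$ forces the moduli to grow without bound, which is impossible for disjoint annuli on the compact sphere $\hat{\Bbb{C}}$. I expect the main obstacle to be the degeneration analysis in the existence step: one must prove that the curves pinched along an escaping orbit assemble into a single $f$-invariant multicurve and that the relevant length ratios converge \emph{exactly} to the entries of the Thurston transformation $f_\Gamma$, so that the threshold $\lambda_\Gamma=1$ emerges sharply rather than as a soft inequality. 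This is precisely where the hyperbolic orbifold hypothesis is indispensable, since it guarantees the strict contraction underlying the whole dichotomy.
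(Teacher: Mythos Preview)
The paper does not prove this theorem: it appears in \S2.1 purely as background, attributed to Douady--Hubbard \cite{MR1251582}, with only the surrounding definitions ($f$-stable multicurve, Thurston eigenvalue, hyperbolic orbifold) spelled out afterward. There is therefore no proof in the paper to compare your argument against.

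That said, your outline is a faithful sketch of the Douady--Hubbard proof the paper is citing: the reduction to fixed points of $\sigma_f$, the factorization of $\sigma_f$ through the covering pullback and the forgetful map, the weak contraction for the Teichm\"uller metric and the resulting rigidity, the dichotomy between a bounded orbit in moduli space (hence convergence to a fixed point) and an escaping orbit producing an obstructing multicurve via short curves and Gr\"otzsch-type inequalities, and the annulus--modulus argument for the converse. One refinement worth noting: the hyperbolic-orbifold hypothesis is not literally the condition $f^{-1}({\rm{P}}_f)\supsetneq{\rm{P}}_f$, and in the cited proof the strict contraction is obtained by bounding the norm of the pushforward operator on integrable quadratic differentials (exactly the operator this paper analyzes in \S3); when $|{\rm{P}}_f|=4$ one in general needs the second iterate $\sigma_f^{\circ 2}$ to get strict contraction. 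Your identification of the degeneration analysis as the delicate step is accurate.
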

\noindent
Here, an \textit{$f$-stable multicurve} $\Gamma$ is a finite collection $\{\gamma_1,\dots,\gamma_n\}$ of disjoint nonhomotopic simple closed curves in $S^2-{\rm{P}}_f$ which are not peripheral (i.e. each component $S^2-\gamma_i$ contains at least two points of ${\rm{P}}_f$); and  every nonperipheral component of $f^{-1}(\gamma_i)$ is  homotopic to an element of $\Gamma$ relative to  ${\rm{P}}_f$.
The \textit{Thurston eigenvalue} 
$\lambda_\Gamma$ then denotes the leading eigenvalue of a linear map  
$f_\Gamma:\Bbb{R}^\Gamma\rightarrow\Bbb{R}^\Gamma$ defined as 
$$f_\Gamma(\gamma)=\sum_{\delta\in\Gamma}\left(\sum_{\eta\text{ a component of } f^{-1}(\gamma) \text{ homotopic to } \delta \text{ rel. } P_f }
\left(\frac{1}{\deg(f\restriction_\eta:\eta\rightarrow\gamma)}\right)\right)\delta.
$$
Finally, the hyperbolicity condition for the orbifold of $f$ is to rule out certain well-known Euclidean examples such as \textit{Lattès maps} (see \cite[\S9]{MR1251582} for nonhyperbolic cases).   

In a more general point of view adapted in \cite{MR2508269,MR3107522}, one divorces the domain and codomain and considers orientation-preserving branched covers $f:\left(S^2,A\right)\rightarrow\left(S^2,B\right)$ instead of Thurston maps. 
As mentioned before, we always take $f$ to be admissible; namely, $B\supseteq {\rm{V}}_f$ (so that the homotopy lifting property holds), and finite sets $A$ and $B$ each have at least three elements (so that $S^2-A$, $S^2-B$ are hyperbolic).   
Now the commutative diagram \eqref{diagram1} should be replaced with \eqref{diagram}, and the pullback map would be in the form of $\sigma_f:\mathcal{T}\left(S^2,B\right)\rightarrow\mathcal{T}\left(S^2,A\right)$.

Various aspects of pullback maps have been studied in  \cite{MR4100123,MREpstein,MR3668372}.
In general, it is hard to describe a Thurston pullback map $\sigma_f$ since it is a map between transcendental, non-algebraic spaces. The article \cite{MR2508269} provides examples of Thurston maps $f$ for which $\sigma_f$ is a Galois ramified cover or an unramified cover onto a dense open subset. This is based on the work of Koch which shows that  the transcendental map $\sigma_f$ descends to an algebraic correspondence on the level of moduli spaces \cite{MR3107522}. To elaborate, let
$f:\left(S^2,A\right)\rightarrow\left(S^2,B\right)$ 
be an admissible cover. Then $\sigma_f$ fits in a commutative diagram such as 
\begin{equation}\label{correspondence}
\xymatrix{\mathcal{T}\left(S^2,B\right)\ar[dd]_{\pi_B} \ar[rd]^{\omega_f}  \ar[rr]^{\sigma_f}& &\mathcal{T}\left(S^2,A\right)\ar[dd]^{\pi_A}\\
&\mathcal{W}_f \ar[ld]^{\rho_B}\ar[rd]_{\rho_A}&\\
\mathcal{M}_{0,B}  & &\mathcal{M}_{0,A}}
\end{equation}
where
\begin{itemize}
\item $\mathcal{W}_f$ is the \textit{Hurwitz space} of $f$; each point of $\mathcal{W}_f$ is a rational map 
$g:\hat{\Bbb{C}}\rightarrow\hat{\Bbb{C}}$ which (as a marked branched cover) is \textit{Hurwitz equivalent} to 
$f:S^2\rightarrow S^2$ (i.e. a diagram such as \eqref{diagram} commutes), and is considered modulo conformal changes of coordinates in domain and codomain. 
\item A point of $\mathcal{W}_f$ determined by a rational map $g$ Hurwitz equivalent to $f$ as in \eqref{diagram} is sent by $\rho_A$ and $\rho_B$ to classes of injections $A\hookrightarrow\hat{\Bbb{C}}$ and $B\hookrightarrow\hat{\Bbb{C}}$ determined by the rows of \eqref{diagram}.   
\item $\omega_f:\mathcal{T}\left(S^2,B\right)\rightarrow\mathcal{W}_f$ and 
$\rho_B:\mathcal{W}_f\rightarrow\mathcal{M}_{0,B}$
are unramified covering maps (hence surjective) and the latter is finite \cite[Theorem 2.6]{MR3107522}.
\end{itemize}
All in all, $\mathcal{R}_f:=\rho_A\circ\rho_B^{-1}:\mathcal{M}_{0,B}\rightrightarrows\mathcal{M}_{0,A}$ is a multi-valued algebraic map -- a correspondence. One interesting case, discussed in \cite[\S 3]{MR3107522}, is when $\rho_A$ is injective and $\sigma_f$ induces a moduli space map 
$\mathcal{R}_f^{-1}:\mathcal{M}_{0,A}\dashrightarrow\mathcal{M}_{0,B}$ 
in the opposite direction. The other extreme is when $\rho_A$ is constant which is equivalent to the constancy of $\sigma_f$. 
The article  \cite{MR2508269} also provides examples of Thurston maps $f$ (where $A=B={\rm{P}}_f$) for which $\sigma_f$ is constant.  We shall review some known results regarding such maps in the next subsection.

\subsection{The case of constant pullback}
A characterization of covers $f:\left(S^2,A\right)\rightarrow\left(S^2,B\right)$
with $\sigma_f$ constant in terms of various objects associated with $f$ can be found in \cite[Theorem 5.1]{MR3456156} (the result originally appeared in \cite{MR2508269}). In particular, $\sigma_f$ is constant if and only if for all simple closed curves 
$\gamma\subset S^2-B$ any component of $f^{-1}(\gamma)\subset S^2-A$ is peripheral.  

Alternatively, one can construct Thurston maps of low rank by a composition trick due to McMullen (\cite{MR2508269}): if 
$f:\left(S^2,A\right)\rightarrow\left(S^2,B\right)$ factors through $(S^2,C)$, then the pullback map
 $\sigma_f:\mathcal{T}\left(S^2,B\right)\rightarrow\mathcal{T}\left(S^2,A\right)$ 
factors through $\mathcal{T}\left(S^2,C\right)$, hence ${\rm{rank}}\, {\rm{D}}\sigma_f\leq |C|-3$. 
In particular, $\sigma_f$ is constant if $|C|=3$. It is possible to find such examples in the dynamical context too; see \cite{MR2508269} for details. Nevertheless, there is an example of a Thurston map of degree four with constant pullback which cannot be obtained this way \cite[Appendix D]{MRArturo}. In \S4.1, we shall discuss this example and some possible generalizations of it. It is an open question if the pullback can be constant for a Thurston map of a prime degree \cite{MR4472056}.

The question of constancy of the pullback can be furthermore considered for \textit{nearly Euclidean Thurston} (NET) maps, which is a very well-studied family \cite{MR2958932,MR3692494}.   A Thurston map $f$ is a NET map if $|{\rm{P}}_f|=4$ and every critical point of $f$ is simple (nondegenerate). See \cite[Theorem 5]{MR3692494} for a characterization of NET maps with constant pullback. It is known that there does exist a NET map of degree $d$ with constant pullback if 
$d>2$ is divisible by $2$ or $9$ (\cite[Theorem 10.8]{MR2958932}) whereas there is no such an example if 
$d$ is squarefree and odd (\cite[Theorem 10.12]{MR2958932}), or if $d$ is the square or the cube of an odd prime (with certain exceptions), or has no prime divisor less than $13$ (\cite[Corollaries 4.6 and 4.9 and 4.10]{MR3994789}). 
In particular, it is known that the pullback cannot be constant for any NET map of degree two (more generally, for any Thurston map of degree two; cf. \cite[Appendix B]{MRArturo}) or three. Theorem \ref{bicritical or cubic} greatly generalizes this by considering all bicritical or cubic Thurston maps (NET or otherwise). The case of cubic Thurston maps can alternatively be addressed by an exhaustive examination of all combinatorial possibilities \cite{MRKoch}.

\section{Investigating the pushforward operator}
Our main tool for proving Theorems \ref{rank bound} and \ref{degree bound} is to study the coderivative of the 
pullback map $\sigma_f:\mathcal{T}\left(S^2,B\right)\rightarrow\mathcal{T}\left(S^2,A\right)$ induced by an admissible cover $f:\left(S^2,A\right)\rightarrow\left(S^2,B\right)$. 
Suppose 
\begin{equation}\label{points}
\sigma_f\left(\tau=\left[\phi:\left(S^2,B\right)\rightarrow \left(\hat{\Bbb{C}},\phi(B)\right)\right]\right)
=\left[\phi':\left(S^2,A\right)\rightarrow \left(\hat{\Bbb{C}},\phi'(A)\right)\right]
\end{equation}
where $\phi$ and $\phi'$ fit in the commutative diagram \eqref{diagram} for an appropriate rational map $g$.
Recall that the cotangent spaces to $\mathcal{T}\left(S^2,A\right)$ and $\mathcal{T}\left(S^2,B\right)$ at $\tau'=[\phi']$ and $\tau=[\phi]$ can be respectively identified with the spaces $Q^1\left(\hat{\Bbb{C}}-\phi'(A)\right)$ and $Q^1\left(\hat{\Bbb{C}}-\phi(B)\right)$ of integrable quadratic differentials.
In that case, the coderivative
${\rm{D}}^*\sigma_f(\tau)$
is the pushforward map $g_*$ acting on quadratic differentials:
\begin{equation}\label{pushforward}
\begin{cases}
g_*:Q^1\left(\hat{\Bbb{C}}-\phi'(A)\right)\rightarrow Q^1\left(\hat{\Bbb{C}}-\phi(B)\right)\\
q(z)\,{\rm{d}}z^2\mapsto \left[\mathlarger{\sum}_{w\in g^{-1}(z)}q(w)\left(\frac{1}{g'(w)}\right)^2\right]\,{\rm{d}}z^2.
\end{cases}
\end{equation}
See \cite[\S3]{MR1251582} for more details on the derivative of $\sigma_f$, and consult 
\cite{MR2245223,MR3675959} for the necessary background from Teichmüller theory.

\subsection{Proof of Theorem \ref{rank bound}}
\begin{proof}[Proof of Theorem \ref{rank bound}]
With the notation as above, assuming that $\sigma_f$ takes $\tau=[\phi]$ to $\tau'=[\phi']$ as in \eqref{points} where $f$ and the
representatives $\phi$ and $\phi'$ make the diagram \eqref{diagram} commutative for an appropriate rational function $g$, we identify 
${\rm{D}}^*\sigma_f(\tau):{\rm{T}}^*_{\tau'}\mathcal{T}\left(S^2,A\right)\rightarrow{\rm{T}}^*_{\tau}\mathcal{T}\left(S^2,B\right)$
with the pushforward map $g_*:Q^1\left(\hat{\Bbb{C}}-\phi'(A)\right)\rightarrow Q^1\left(\hat{\Bbb{C}}-\phi(B)\right)$ from \eqref{pushforward}.
The spaces $Q^1\left(\hat{\Bbb{C}}-\phi'(A)\right)$ and $Q^1\left(\hat{\Bbb{C}}-\phi(B)\right)$ of integrable quadratic differentials are of dimensions $|A|-3$ and $|B|-3$ respectively.  Without any loss of generality, we may pick the representatives $\phi$ and $\phi'$ of the Teichm\"uller points $\tau$ and $\tau'$ such that 
$\phi'(A),\phi(B)\subset\Bbb{C}$. Denote $\dim\, \ker{\rm{D}}^*\sigma_f(\tau)$ by $k$. If $k=0$, then 
$$
{\rm{rank}}\, {\rm{D}}^*\sigma_f(\tau)={\rm{rank}}\, {\rm{D}}\sigma_f(\tau)=\dim\, \mathcal{T}\left(S^2,A\right)=|A|-3,
$$
and inequality \eqref{inequality1} clearly holds.  So suppose 
$\{q_1(z)\,{\rm{d}}z^2,\dots,q_k(z)\,{\rm{d}}z^2\}$  
is a basis for $\ker g_*$ where $q_1(z),\dots,q_k(z)$ are linearly independent rational functions on $\hat{\Bbb{C}}$.
Denote the set of poles of the quadratic differentials $q_j(z)\,{\rm{d}}z^2\, (1\leq j\leq k)$ by 
$X:=\{u_1,\dots,u_n\}\subseteq\phi'(A)$ where the $u_i$'s are distinct complex numbers, and none of them is a pole of $g$.
Notice that we must have $n\geq 4$ because any integrable quadratic differential has at least four poles. \\
\indent
Each $q_j(z)\,{\rm{d}}z^2$ can be written as 
$$
\frac{p_j(z)}{\prod_{i=1}^n (z-u_i)}\,{\rm{d}}z^2
$$
where $p_j(z)$ is a polynomial with $\deg p_j\leq n-4$  (since otherwise there would be a pole at infinity). 
Let $c_*$ be a simple critical point of $g$  such that all other points of 
$g^{-1}(g(c_*))$ are regular, and this fiber does not intersect $\phi'(A)$.  
We shall show that any quadratic differentials $q_j(z)\,{\rm{d}}z^2$ vanishes at $c_*$. 
Let us first see what implication this would have for the rank.
The set of critical points $c_*$ with the property mentioned above is the image under $\phi'$ of the subset from \eqref{ell2}, i.e. 
$$
\left\{c\in S^2-A \,\Bigg|\,
\begin{array}{l}
c \text{ is  a } \text{simple critical point of  } f \text{ with } f^{-1}(f(c))\cap A=\emptyset\\
\text{and no  other point of } f^{-1}(f(c))\text{ is critical}. 
\end{array}
\right\}
$$
where $c_*=\phi'(c)$. The size of this set is denoted by $\ell_2$ in \eqref{ell2}. Therefore, the aforementioned claim requires polynomials $p_1(z),\dots,p_k(z)$ to vanish on a set of size $\ell_2$. The dimension of the space of polynomials of degree at most $n-4$ that vanish on a set of size $\ell_2$ is $n-3-\ell_2$ (if $c_*=\infty$, then $\deg p_j\leq n-4$ must be replaced with $\deg p_j\leq n-5$). 
We deduce that 
\begin{equation}\label{auxiliary3}
{\rm{rank}}\, {\rm{D}}\sigma_f(\tau)={\rm{rank}}\, {\rm{D}}^*\sigma_f(\tau)=|A|-3-k\geq (|A|-n)+\ell_2.
\end{equation}
The other part of the proof is to provide an upper bound for $n$ by showing that 
\begin{equation}\label{auxiliary4}
n\leq \left|\left\{a\in A\mid a \text{ is in a critical fiber or }|A\cap f^{-1}(f(a))|>1\right\}\right|.
\end{equation}
Substituting in \eqref{auxiliary3}, this would yield the desired lower bound \eqref{inequality1} for the rank:
\begin{equation*}
\begin{split}
{\rm{rank}}\, {\rm{D}}\sigma_f(\tau)
&\geq\left(|A|-\left|\left\{a\in A\mid a \text{ is in a critical fiber or }|A\cap f^{-1}(f(a))|>1\right\}\right|\right)+\ell_2\\
&=\left|\left\{a\in A\mid a \text{ is not in a critical fiber and }|A\cap f^{-1}(f(a))|=1\right\}\right|+\ell_2\\
&=\left|\{b\in B\mid b \text{ is not a critical value of } f \text{ and } |A\cap f^{-1}(b)|=1\}\right|+\ell_2\\
&=\ell_1+\ell_2.
\end{split}
\end{equation*}
To conclude the proof, we first establish \eqref{auxiliary4} and then the claim made on $c_*$. \\
\indent
Inequality \eqref{auxiliary4} will be obtained by investigating  $X=\{u_1,\dots,u_n\}$. Each element $u_i$ of this set should occur as a pole of one of quadratic differentials  
\begin{equation}\label{differentials}
q_1(z)\,{\rm{d}}z^2=\frac{p_1(z)}{\prod_{i=1}^n (z-u_i)}\,{\rm{d}}z^2,\dots,
q_k(z)\,{\rm{d}}z^2=\frac{p_k(z)}{\prod_{i=1}^n (z-u_i)}\,{\rm{d}}z^2.
\end{equation}
Thus one of polynomials $p_1(z),\dots,p_k(z)$ must be nonzero at $u_i$, i.e. these polynomials do not simultaneously vanish at any point from $X=\{u_1,\dots,u_n\}$. Hence one can pick a polynomial  $p(z)$ belonging to 
${\rm{Span}}\{p_1(z),\dots,p_k(z)\}$ which is nonzero at every element of $X$. Now the quadratic differential
\begin{equation}\label{differential}
q(z)\,{\rm{d}}z^2:=\frac{p(z)}{\prod_{i=1}^n (z-u_i)}\,{\rm{d}}z^2
\end{equation}
has $X$ as the set of its (simple) poles, and also lies in $\ker g_*$. 
So we should have 
\begin{equation}\label{pushzero}
\mathlarger{\sum}_{w\in g^{-1}(z)}\frac{p(w)}{\prod_{i=1}^n (w-u_i)}\left(\frac{1}{g'(w)}\right)^2=0\quad \forall z.
\end{equation}
The key point is to notice that  since the sum is identically zero, if a summand tends to infinity, some other summand must blow up as well.
Fix an 
$i_*\in\{1,\dots,n\}$. As $z\to g(u_{i_*})$, one has $w=g^{-1}(z)\to u_{i_*}$ for an appropriate branch of $g^{-1}$; the corresponding summand $\frac{p(w)}{\prod_{i=1}^n (w-u_i)}\left(\frac{1}{g'(w)}\right)^2$ thus tends to infinity. (Keep in mind that $p(u_{i_*})\neq 0$, and $u_{i_*}\in\Bbb{C}$ is not a pole of $g$ or $g'$.) Thus there should also be another branch $w=g^{-1}(z)$ for which $\frac{p(w)}{\prod_{i=1}^n (w-u_i)}\left(\frac{1}{g'(w)}\right)^2\to\infty$ as $z\to g(u_{i_*})$. If $w=g^{-1}(z)\to u_{i_*}$ for this branch too, then $u_{i_*}$ must be a critical point of $g$, and the term $g'(w)$ in the denominator tends to $0$. Now suppose $g(u_{i_*})$ is not a critical value of $g$. Then, as 
$z\to g(u_{i_*})$, this other branch $w=g^{-1}(z)$ should tend to a point of the fiber $g^{-1}(g(u_{i_*}))$ different from $u_{i_*}$, and $g'$ is nonzero at that limit. Consequently, the limit of  $w=g^{-1}(z)$ should be another  point 
of $X=\{u_1,\dots,u_n\}$ different from $u_{i_*}$ since otherwise  
$\frac{p(w)}{\prod_{i=1}^n (w-u_i)}\left(\frac{1}{g'(w)}\right)^2$
cannot blow up as $z\to g(u_{i_*})$. But then the value of $g$ at that point coincides with $g(u_{i_*})$.
We conclude that: each point of $X$ either lies above a critical value of $g$, or the fiber of $g$ it belongs to contains more than one element of $X$. 
Pulling back via homeomorphisms $\phi$ and $\phi'$, we obtain the subset 
$X':=\phi'^{-1}(X)\subseteq A$ of the same size $n$ which satisfies a similar property: for any $a\in X'$, either $f(a)\in B$ is a critical value of $f:\left(S^2,A\right)\rightarrow\left(S^2,B\right)$, or $A\cap f^{-1}(f(a))$ has more than one element. Inequality \eqref{auxiliary4} then follows. \\
\indent
We finally turn to the claim made about the critical point $c_*$: if ${\rm{mult}}_{c_*}(g)=2$,  and the multiplicity of any other point above $v_*:=g(c_*)$ is $1$, and no point above $v_*$ lies in $A$,
then all elements of the basis \eqref{differentials} for $\ker g_*$ must vanish at $c_*$. 
There is no loss of generality in assuming that $v_*\neq\infty$ and $\infty\notin g^{-1}(v_*)$.
Aiming for a contradiction, suppose one of the polynomials $p_j$ is nonzero at $c_*\in\Bbb{C}$. Repeating the argument used before, one can pick a polynomial
$p(z)\in{\rm{Span}}\{p_1(z),\dots,p_k(z)\}$ that is nonzero not only for elements of $X=\{u_1,\dots,u_n\}$, but also at $c_*$. 
We shall again work with the quadratic differential 
$\frac{p(z)}{\prod_{i=1}^n (z-u_i)}\,{\rm{d}}z^2$ from \eqref{differential} that
belongs to $\ker g_*$, so \eqref{pushzero} is satisfied. 
We next rewrite the sum \eqref{pushzero}. 
Let $V$ be a small open subset around $v_*\in\Bbb{C}$ whose preimage $g^{-1}(V)$ may be written as a union of disjoint open sets  $U_1,\dots,U_r$ such that each $U_j$ contains exactly one element of $g^{-1}(v_*)$. 
Let us denote the unique element of $U_j\cap g^{-1}(v_*)$ by $c_{*j}\in\Bbb{C}$. By shrinking $V$ if necessary, in a suitable coordinate system, $g\restriction_{U_j}:U_j\rightarrow V$ is in the form of $\tilde{w}\mapsto \tilde{w}^{m_j}$ where   
$m_j:={\rm{mult}}_{c_{*j}}(g)$.
We then have:
\begin{equation}\label{sum} 
0=\mathlarger{\sum}_{w\in g^{-1}(z)}\frac{p(w)}{\prod_{i=1}^n (w-u_i)}\left(\frac{1}{g'(w)}\right)^2
=\mathlarger{\sum}_{j=1}^r
\mathlarger{\sum}_{w\in g^{-1}(z)\cap U_j}\frac{p(w)}{\prod_{i=1}^n (w-u_i)}\left(\frac{1}{g'(w)}\right)^2
\quad\forall z\in V.
\end{equation}
As $z\to v_*\in\Bbb{C}$, in \eqref{sum}, the points $w$ lying in 
$g^{-1}(z)\cap U_j$ tend to $c_{*j}\in\Bbb{C}$ along $m_j$ different branches of $g^{-1}$. 
The corresponding sum 
$\mathlarger{\sum}_{w\in g^{-1}(z)\cap U_j}\frac{p(w)}{\prod_{i=1}^n (w-u_i)}\left(\frac{1}{g'(w)}\right)^2$
can blow up only if $c_{*j}$ is a critical point or is one of the $u_i$'s. 
Terms $p(w)$ in numerators remain bounded; so one should discuss the growth rate of 
$\mathlarger{\sum}_{w\in g^{-1}(z)\cap U_j}\frac{1}{\prod_{i=1}^n (w-u_i)}\left(\frac{1}{g'(w)}\right)^2$
as $z$ approaches $v_*$.
Lemma \ref{technical} below investigates the growth rate of this fraction 
as $z$ approaches $v_*$ along a line segment, i.e. $z=t+v_*\to v_*$ where $t\to 0^+$.  
Given that $p(c_*)\neq 0$, the lemma implies  that 
when $c_{*j}$ is the critical point $c_*\in \Bbb{C}-\{u_1,\dots,u_n\}$, the sum   
$\mathlarger{\sum}_{w\in g^{-1}(z)\cap U_j}\frac{p(w)}{\prod_{i=1}^n (w-u_i)}\left(\frac{1}{g'(w)}\right)^2$ 
is $\frac{C}{t}+O(1)$ as $t\to 0^+$ for some nonzero constant $C$. 
All other points $c_{*j}$ are regular and outside $\phi'(A)\supseteq X=\{u_1,\dots,u_n\}$; 
therefore,  their corresponding sums are $O(1)$. 
In summary, as  $z=t+v_*\to v_*$, among the $r$ summations appearing on the right-hand side of \eqref{sum}, there is a single one which is $\frac{C}{t}+O(1)$ while the rest are bounded. This is absurd because they should add up to zero.
\end{proof}

Here is the lemma used in the proof above:
\begin{lemma}\label{technical}
Let $g:\hat{\Bbb{C}}\rightarrow\hat{\Bbb{C}}$ be a rational map. Suppose $v_*$ is a complex number,  and $c_*\in\hat{\Bbb{C}}$ is a simple critical point belonging to $f^{-1}(v_*)$.  Take $U$ to be an open subset that intersects $g^{-1}(v_*)$ only at $c_*$. Assume that we are tending to $v_*$ along the line segment 
\begin{equation}\label{path}
\mathbf{r}(t):=t+v_*; 
\end{equation}
and take $X$ to be $\{u_1,\dots,u_n\}$ where $u_1,\dots,u_n$ are pairwise distinct complex numbers different from $c_*$.  
Then, as $t\to 0^+$, 
\begin{equation}\label{local sum}
\mathlarger{\sum}_{w\in U,\, g(w)=\mathbf{r}(t)}\frac{1}{\prod_{i=1}^n (w-u_i)}\left(\frac{1}{g'(w)}\right)^2 
=\frac{C}{t}+O(1)
\end{equation}
where $C:=\frac{1}{\prod_{i=1}^n (c_*-u_i)}\frac{1}{g''(c_*)}$
is a nonzero constant.
\end{lemma}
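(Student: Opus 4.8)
The plan is to work entirely in the local coordinate near $c_*$ provided by the hypothesis that $c_*$ is a simple critical point over $v_*$. Since ${\rm{mult}}_{c_*}(g)=2$, one can write $g(w) - v_* = \tfrac{1}{2}g''(c_*)(w-c_*)^2 + O\big((w-c_*)^3\big)$ in a neighborhood of $c_*$, with $g''(c_*)\neq 0$. Setting $s := w - c_*$, the equation $g(w) = \mathbf{r}(t) = t+v_*$ becomes $\tfrac{1}{2}g''(c_*)s^2\big(1 + O(s)\big) = t$, so the two branches of $g^{-1}$ landing in $U$ as $t\to 0^+$ satisfy $s = \pm\sqrt{2t/g''(c_*)}\,\big(1 + O(\sqrt{t})\big)$; denote them $w_{\pm}(t) = c_* \pm \sqrt{2t/g''(c_*)} + O(t)$. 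First I would record these asymptotics carefully, including the next-order term, since the cancellation between the two branches is exactly what produces the clean $C/t$ behavior.

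Next I would expand the summand $F(w) := \dfrac{1}{\prod_{i=1}^n (w-u_i)}\Big(\dfrac{1}{g'(w)}\Big)^2$ about $w = c_*$. Because $u_i \neq c_*$ for all $i$, the factor $\frac{1}{\prod_i(w-u_i)}$ is holomorphic and nonzero at $c_*$, equal to $\frac{1}{\prod_i(c_*-u_i)} + O(s)$. The delicate factor is $\big(g'(w)\big)^{-2}$: since $g'(c_*) = 0$ and $g'(w) = g''(c_*)s + O(s^2) = g''(c_*)s\big(1+O(s)\big)$, we get $\big(g'(w)\big)^{-2} = \dfrac{1}{g''(c_*)^2 s^2}\big(1 + O(s)\big)$. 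Hence $F(w_{\pm}(t)) = \dfrac{1}{\prod_i(c_*-u_i)}\cdot\dfrac{1}{g''(c_*)^2 s_{\pm}^2}\big(1 + O(s_{\pm})\big)$, where $s_{\pm}^2 = \dfrac{2t}{g''(c_*)}\big(1 + O(\sqrt{t})\big)$. Substituting, each branch contributes $\dfrac{1}{\prod_i(c_*-u_i)}\cdot\dfrac{1}{g''(c_*)^2}\cdot\dfrac{g''(c_*)}{2t}\big(1+O(\sqrt{t})\big) = \dfrac{1}{2\,\prod_i(c_*-u_i)\,g''(c_*)}\cdot\dfrac{1}{t}\big(1+O(\sqrt{t})\big)$. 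Summing the two branches gives $\dfrac{1}{\prod_i(c_*-u_i)\,g''(c_*)}\cdot\dfrac{1}{t} + (\text{lower order})$, which is the asserted leading term $C/t$ with $C = \dfrac{1}{\prod_{i=1}^n(c_*-u_i)}\dfrac{1}{g''(c_*)}$, and $C\neq 0$ since $g''(c_*)\neq 0$ and no $u_i$ equals $c_*$.

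The one point requiring care is the claim that the error is genuinely $O(1)$ and not, say, $O(1/\sqrt{t})$. The naive bound on each branch is $\dfrac{1}{t}\big(1+O(\sqrt t)\big) = \dfrac{1}{t} + O(1/\sqrt t)$, so the individual $O(1/\sqrt t)$ terms must cancel when the two branches are added. This is where I would be most careful: I would write $s_{\pm} = \pm\alpha\sqrt t + \beta t + O(t^{3/2})$ with $\alpha = \sqrt{2/g''(c_*)}$ (fixing a branch of the square root) and $\beta$ determined by the cubic term of $g$, then expand $F(w_+) + F(w_-)$ and observe that every half-integer power of $t$ enters with opposite signs from the two branches. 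Concretely, $F(w_\pm) = \dfrac{1}{g''(c_*)^2}\big(a_0 + a_1 s_\pm + \cdots\big) s_\pm^{-2}$ with the $a_j$ the Taylor coefficients of $w\mapsto \prod_i(w-u_i)^{-1}\cdot\big(g'(w)/(g''(c_*)(w-c_*))\big)^{-2}$ at $c_*$; the $s_\pm^{-2}$ and $s_\pm^{-1}$ pieces are the only potentially unbounded ones, and writing $s_\pm^{-2} = \dfrac{1}{\alpha^2 t}\big(1 \mp 2\beta\alpha^{-1}\sqrt t + O(t)\big)$, $s_\pm^{-1} = \pm\dfrac{1}{\alpha\sqrt t}\big(1 + O(\sqrt t)\big)$, one checks directly that the $t^{-1/2}$ contributions are odd in the branch label and cancel in the sum, leaving exactly $C/t + O(1)$. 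That bookkeeping of the odd/even powers is the main obstacle; everything else is a routine local computation with the Taylor expansion of $g$ at a nondegenerate critical point.
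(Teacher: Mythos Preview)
Your proposal is correct and follows the same local-analysis strategy as the paper. The one difference worth noting concerns exactly the point you flag as ``the main obstacle'': the paper sidesteps your odd/even bookkeeping by passing to an \emph{exact} normal form rather than a truncated Taylor expansion. Since $c_*$ is a simple critical point, there is a holomorphic chart $\eta$ on $U$ with $\eta(c_*)=0$ and $g(w)=\eta(w)^2+v_*$ identically, so the two preimages of $\mathbf{r}(t)$ in $U$ are precisely $\eta^{-1}(\pm\sqrt{t})$. The local sum then takes the form $\dfrac{1}{t}\bigl[G(\sqrt{t})+G(-\sqrt{t})\bigr]$ with $G$ holomorphic at $0$; the bracket is automatically even in $\sqrt{t}$, hence holomorphic in $t$, and equals $2G(0)+O(t)$. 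This yields $C/t+O(1)$ immediately, with the identity $2\bigl(\eta'(c_*)\bigr)^2=g''(c_*)$ recovering your constant. It is exactly the cancellation you verify by hand, but packaged so that no half-integer powers ever appear.
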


\begin{proof}
The local behavior of $g$ near $c_*$ can be described as $\tilde{w}\mapsto \tilde{w}^2$ in appropriate coordinate systems. 
In other words, by shrinking $U$ if necessary, one can find a complex chart $\eta$ on $U$ with $\eta(c_*)=0$ so that  
\begin{equation}\label{normal form}
g(w)=\left(\eta(w)\right)^2+v_*\quad \forall w\in U.
\end{equation} 
The curve $\mathbf{r}$ from \eqref{path} lifts to two curves in $U$ passing through $g^{-1}(v_*)\cap U=\{c_*\}$. They can be described as 
\begin{equation}\label{paths}
t\mapsto\eta^{-1}\left(\pm\sqrt{t}\right)\quad t\geq 0.
\end{equation} 
Substituting in \eqref{local sum}, one needs to analyze 
\begin{equation}\label{local sum'}
\begin{split}
&\mathlarger{\sum}_{j=0}^{1}\,\frac{1}
{\prod_{i=1}^n \left(\eta^{-1}\left((-1)^j\sqrt{t}\right)-u_i\right)}
\left(\frac{1}{2\,\eta'\left(\eta^{-1}\left((-1)^j\sqrt{t}\right)\right)
\eta\left(\eta^{-1}\left((-1)^j\sqrt{t}\right)\right)}\right)^2\\
&=\left[\frac{1}{\prod_{i=1}^n \left(\eta^{-1}(\sqrt{t})-u_i\right)}\frac{1}{4\,\left(\eta'\left(\eta^{-1}\left(\sqrt{t}\right)\right)\right)^2}
+\frac{1}{\prod_{i=1}^n \left(\eta^{-1}(-\sqrt{t})-u_i\right)}\frac{1}{4\,\left(\eta'\left(\eta^{-1}\left(-\sqrt{t}\right)\right)\right)^2}\right]\frac{1}{t}
\end{split}
\end{equation}
as $t\to 0^+$. 
The term in brackets is continuous at $t=0$ (the point at which $\eta^{-1}=c_*$) and tends to  
\begin{equation}\label{C}
C:=\frac{1}{\prod_{i=1}^n (c_*-u_i)}\frac{1}{2\left(\eta'(c_*)\right)^2}\neq 0
\end{equation}
which is finite and nonzero (recall that $c_*\notin\{u_1,\dots,u_n\}$).
We conclude that \eqref{local sum} goes to infinity like 
$$\left(C+O(t)\right)\frac{1}{t}=\frac{C}{t}+O(1).$$ 
The constant $C$ above is the desired one because differentiating \eqref{normal form} along with $\eta(c_*)=0$ yield 
$2\left(\eta'(c_*)\right)^2=g''(c_*)\neq 0$.
\end{proof}

\begin{remark}
Possible poles of the pushforward of an integrable quadratic differential $q(z)\,{\rm{d}}z^2$ by a rational map $g$  are critical values of $g$ or the images of poles of $q(z)\,{\rm{d}}z^2$ under $g$. In Lemma \ref{technical}, the assumption on the nondegeneracy of the critical point  is to ensure that the pushforward admits a pole. There does not seem to be any easy formulation of the lemma for degenerate critical points. Indeed, in local coordinates and in terms of Laurent series, the pushforward of 
$\left(\sum_{k=-1}^\infty a_k\tilde{w}^k\right)\,{\rm{d}}\tilde{w}^2$
by $\tilde{w}\mapsto\tilde{w}^m$ is given by
$\left(\sum_{j=-1}^\infty\frac{a_{m(j+2)-2}}{m}\tilde{w}^{j}\right)\,{\rm{d}}\tilde{w}^2$. 
For this pushforward to have a pole, the coefficient $a_{m-2}$ should be nonzero. 
This has a global interpretation in terms of the original quadratic differential only when $m=2$: in that case, one can just require the differential to be holomorphic and nonzero at the point under consideration (i.e. $a_{-1}=0$ and $a_{0}\neq 0$).
\end{remark}

\subsection{Proof of Theorem \ref{degree bound}}

\begin{proof}[Proof of Theorem \ref{degree bound}]
We first obtain the  inequality $|A|\leq d+2$. Similar to the beginning of the proof of Theorem \ref{rank bound}, we have 
$$\sigma_f\left(\tau=\left[\phi:\left(S^2,B\right)\rightarrow \left(\hat{\Bbb{C}},\phi(B)\right)\right]\right)
=\left[\phi':\left(S^2,A\right)\rightarrow \left(\hat{\Bbb{C}},\phi'(A)\right)\right]
$$
which yields the commutative diagram \eqref{diagram}.
The coderivative ${\rm{D}}^*\sigma_f(\tau)$, identified  with the pushforward map $g_*:Q^1\left(\hat{\Bbb{C}}-\phi'(A)\right)\rightarrow Q^1\left(\hat{\Bbb{C}}-\phi(B)\right)$ as before, is identically zero. 
The key idea is to focus on the action of $g_*$ on those integrable quadratic differentials which have exactly four poles.
Without any loss of generality, we may assume $\infty\in\phi'(A)$. Write the points in $\phi'(A)$ as 
$\phi'(A)=\{u_1,\dots,u_n,\infty\}$ where  the $u_i$'s are distinct points in $\Bbb{C}$ and $n:=|A|-1\geq 3$ (nothing to prove when $|A|=3$).
A quadratic differential with at worst simple poles at the points of $\phi'(A)$ is of the form 
$\frac{p(z)}{\prod_{i=1}^n (z-u_i)}\,{\rm{d}}z^2$
where $p$ is a polynomial of degree not greater than $n-3$ (a larger degree causes a pole of higher order at infinity).
One can take $p$ to be a polynomial which has $n-3$ of the points 
$u_1,\dots,u_n$ as its roots to obtain quadratic differentials of the form 
$\frac{1}{(z-u_r)(z-u_s)(z-u_t)}\,{\rm{d}}z^2$
where $1\leq r<s<t\leq n$.
Notice that such quadratic differentials span the cotangent space:
$$Q^1\left(\hat{\Bbb{C}}-\phi'(A)\right)={\rm{Span}}\left\{\frac{\prod_{i\in I}(z-u_i)}{\prod_{i=1}^n (z-u_i)}\,{\rm{d}}z^2\right\}_{I\subset \{1,\dots, n\}, |I|=n-3},$$
because $\left\{\prod_{i\in I}(z-u_i)\right\}_{I\subset \{1,\dots, n\}, |I|=n-3}$
spans the vector space of polynomials of degree not greater than $n-3$. 
We conclude that ${\rm{D}}^*\sigma_f(\tau)=0$ may be rephrased as 
$g_*\left(\frac{1}{(z-u_r)(z-u_s)(z-u_t)}\,{\rm{d}}z^2\right)=0$ for any $1\leq r<s<t\leq n$. 
By the definition of $g_*$ from \eqref{pushforward}: 
$$
\mathlarger{\sum}_{w\in g^{-1}(z)}\frac{1}{(w-u_r)(w-u_s)(w-u_t)}\left(\frac{1}{g'(w)}\right)^2=0\quad \forall z.
$$
Next,  set $r=1$, $s=2$ and $t>2$ in the equation above. Let $z_*\in\Bbb{C}-\{g(\infty),g(u_1),\dots,g(u_t)\}$ be a regular value for the rational function $g$ and write the elements of the fiber $g^{-1}(z_*)$ as $\{w_1,\dots,w_d\}$;   the $w_j$'s are distinct complex numbers not belonging to $\{u_1,\dots,u_n\}$. We then have:
\begin{equation}\label{auxiliary6}
\mathlarger{\sum}_{j=1}^d\frac{1}{(w_j-u_1)(w_j-u_2)(w_j-u_t)}\left(\frac{1}{g'(w_j)}\right)^2=0 \quad 
\forall t\in\{3,\dots,n\}\quad \left(\{w_1,\dots,w_d\}=g^{-1}(z_*)\right).
\end{equation}
This can be thought of as a homogeneous linear system of equations with $d$ unknowns 
$$\left(\frac{1}{g'(w_1)}\right)^2,\dots,\left(\frac{1}{g'(w_d)}\right)^2$$ 
and $n-2$ equations indexed by $t\in\{3,\dots,n\}$. But here the complex numbers $\left(\frac{1}{g'(w_j)}\right)^2\neq 0$ provide a nontrivial solution (keep in mind that $w_j$ is neither a pole nor a critical point). This is possible only if the rank of  coefficient matrix 
$$
\left[
\frac{1}{(w_j-u_1)(w_j-u_2)(w_j-u_t)}
\right]_{\substack{3\leq t\leq n\\ 1\leq j\leq d}}
$$
is smaller than the number of unknowns $d$. But the matrix above -- which is of size $(n-2)\times d$ -- is full rank; its rank is $m:=\min\{n-2,d\}$. To see this, we just need to show that the minor 
$$
\left[\frac{1}{(w_j-u_1)(w_j-u_2)(w_j-u_t)}\right]_{\substack{3\leq t\leq m+2\\ 1\leq j\leq m}}
$$
has nonzero determinant. 
By taking out the term
$\frac{1}{(w_j-u_1)(w_j-u_2)}$
from every entry of the $j^{\rm{th}}$ column for all $1\leq j\leq m$, and then using the classical formula for \textit{Cauchy determinants}, we obtain the determinant as 
\begin{equation*}
\begin{split}
&\frac{1}{\prod_{j=1}^m(w_j-u_1)\prod_{j=1}^m(w_j-u_2)}.\det\left(\left[\frac{1}{(w_j-u_t)}\right]_{\substack{3\leq t\leq m+2\\ 1\leq j\leq m}}\right)\\
&=\frac{1}{\prod_{j=1}^m(w_j-u_1)\prod_{j=1}^m(w_j-u_2)}.\frac{\prod_{t=4}^{m+2}\prod_{j=1}^{t-3}
(w_{t-2}-w_j)(u_{j+2}-u_t)}{\prod_{t=3}^{m+2}\prod_{j=1}^m(w_j-u_t)}.
\end{split}
\end{equation*}
This is nonzero since $u_1,u_2,u_3,\dots,u_{m+2},\dots,u_n,w_1,\dots,w_m,\dots,w_d$ are pairwise distinct complex numbers.
Consequently, the rank $m$ is equal to $\min\{n-2=|A|-3,d\}$. As mentioned before, this must be smaller than $d$ which concludes the proof of $|A|\leq d+2$.  \\
\indent
We next turn to the second part: $v$ is a critical value of $f$ whose fiber is disjoint from $A$ and does not contain any degenerate critical points. By changing the representative $\phi$ of $\tau$ if necessary, we may assume that $v_*:=\phi(v)\neq \infty$. By the commutative diagram \eqref{diagram}, $v_*\in\Bbb{C}$ is a critical value of $g$, the fiber $g^{-1}(v_*)$ does not intersect $\phi'(A)=\{u_1,\dots,u_n,\infty\}$, and the critical points belonging to $g^{-1}(v_*)$ are all simple.  
Equation \eqref{auxiliary6} is satisfied for  regular values $z_*$ near $v_*$.
Suppose $z_*=t+v_*$ where $t\to 0^+$.  As $z_*$ tends to $v_*$, elements $w_j$ of $g^{-1}(z_*)$ tend to points in  
$$g^{-1}(v_*)\subset\hat{\Bbb{C}}-\left(g^{-1}(\infty)\cup\{u_1,\dots,u_n,\infty\}\right).$$
We now invoke Lemma \ref{technical}. 
As $t\to 0^+$, the limit of a $w_j\in g^{-1}(z_*)$ is either a regular point in which case the corresponding summand in \eqref{auxiliary6} is $O(1)$; or that limit is a simple critical point $c_*$ to which two different preimages $w_j=g^{-1}(z_*)$ converge, and the corresponding summands from \eqref{auxiliary6} add up to 
$$
\left(\frac{1}{(c_*-u_1)(c_*-u_2)(c_*-u_t)}\frac{1}{g''(c_*)}\right)\frac{1}{t}+O(1).
$$
The sum of coefficients of $\frac{1}{t}$ in Laurent series must be zero:
\begin{equation}\label{auxiliary7}
\sum_{c_*\in g^{-1}(v_*)\text{ a critical point}}\frac{1}{(c_*-u_1)(c_*-u_2)(c_*-u_t)}\frac{1}{g''(c_*)}=0
\quad\forall t\in\{3,\dots,n\}.
\end{equation}
The logic applied to \eqref{auxiliary6} may be repeated: \eqref{auxiliary7} defines a homogeneous linear system with 
$n-2=|A|-3$ equations and 
$$
\left|g^{-1}(v_*)\cap{\rm{C}}_g\right|=\left|f^{-1}(v)\cap{\rm{C}}_f\right|
$$
unknowns. The unknowns $\frac{1}{g''(c_*)}$ are nonzero (keep in mind that $c_*$ is a simple critical point, and not a pole); and the matrix of coefficients is full rank.  Consequently, the number of unknowns must be greater than the number of equations:
$$
\left|f^{-1}(v)\cap{\rm{C}}_f\right|>|A|-3.
$$
This is the desired improved inequality from Theorem \ref{degree bound}.
\end{proof}

\begin{remark}
If $|B|=3$, then for any admissible branched cover $f:\left(S^2,A\right)\rightarrow\left(S^2,B\right)$ of degree $d$ 
the associated $\sigma_f$ is constant because $\mathcal{T}\left(S^2,B\right)$ is zero-dimensional. 
So one should always have $|A|\leq d+2$ according to Theorem \ref{degree bound}. 
This is indeed true since $A\subseteq f^{-1}(B)$, and 
$|f^{-1}(B)|=d+2$ due to the Riemann-Hurwitz formula.
\end{remark}

\subsection{Corollaries of Theorems \ref{rank bound} and \ref{degree bound}}
We begin with some immediate corollaries of Theorem \ref{rank bound}. The first one was mentioned in \S1. 
\begin{proof}[Proof of Corollary \ref{ell1 corollary}]
Assume the contrary. If there exists an element $a$ of $A$ belonging to a regular fiber that intersects  $A$ only once, then $\ell_1$ from \eqref{ell1} is positive and therefore, Theorem \ref{rank bound} yields 
${\rm{rank}}\, {\rm{D}}\sigma_f\geq\min\{\ell_1,|A|-3\}\geq 1$
at all points of $\mathcal{T}\left(S^2,B\right)$; a contradiction. 
\end{proof}

The two corollaries below follow instantly from Theorem \ref{rank bound}:
\begin{corollary}\label{immersion}
If $f$ has exactly three critical values, $f\restriction_A:A\rightarrow B$ is surjective, and $A$ does not intersect any regular fiber of $f$ at more than one point, then $\sigma_f$ is an immersion.
\end{corollary}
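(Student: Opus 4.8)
The plan is to derive this directly from Theorem \ref{rank bound} by showing that, under the stated hypotheses, the lower bound it provides already equals the dimension of the source $\mathcal{T}\left(S^2,B\right)$. Recall that $\sigma_f$ is an immersion exactly when ${\rm{D}}\sigma_f(\tau):{\rm{T}}_\tau\mathcal{T}\left(S^2,B\right)\rightarrow{\rm{T}}_{\sigma_f(\tau)}\mathcal{T}\left(S^2,A\right)$ is injective at every $\tau$, i.e. when ${\rm{rank}}\,{\rm{D}}\sigma_f(\tau)=\dim\mathcal{T}\left(S^2,B\right)=|B|-3$; and since the rank can never exceed the dimension $|B|-3$ of the domain of this linear map, it suffices to prove ${\rm{rank}}\,{\rm{D}}\sigma_f(\tau)\geq|B|-3$ at every point.

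First I would compute $\ell_1$. Admissibility gives ${\rm{V}}_f\subseteq B$, and the hypothesis that $f$ has exactly three critical values says $|{\rm{V}}_f|=3$, so $B\setminus{\rm{V}}_f$ has precisely $|B|-3$ elements. For any $b\in B\setminus{\rm{V}}_f$ the fiber $f^{-1}(b)$ is regular, hence meets $A$ in at most one point by hypothesis; on the other hand $b\in B=f(A)$ forces $A\cap f^{-1}(b)\neq\emptyset$. Thus $|A\cap f^{-1}(b)|=1$ for every such $b$, and comparing with \eqref{ell1} we conclude $\ell_1=|B|-3$.

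Next I would verify $|A|\geq|B|$. Splitting $A=\bigl(A\setminus f^{-1}({\rm{V}}_f)\bigr)\sqcup\bigl(A\cap f^{-1}({\rm{V}}_f)\bigr)$, the restriction of $f$ to the first part is injective, because its fibers are regular and regular fibers meet $A$ in at most one point, and its image is $f(A)\setminus{\rm{V}}_f=B\setminus{\rm{V}}_f$; hence that part has exactly $|B|-3$ elements. Since each of the three critical values lies in $B=f(A)$, the second part contains at least three points of $A$. Therefore $|A|\geq(|B|-3)+3=|B|$, that is, $|A|-3\geq|B|-3$.

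Combining these two computations with Theorem \ref{rank bound} and the trivial bound $\ell_2\geq0$,
\[
{\rm{rank}}\,{\rm{D}}\sigma_f(\tau)\ \geq\ \min\bigl\{\ell_1+\ell_2,\ |A|-3\bigr\}\ \geq\ \min\bigl\{|B|-3,\ |B|-3\bigr\}\ =\ |B|-3,
\]
so the rank is exactly $|B|-3$ at every $\tau$ and $\sigma_f$ is an immersion. I do not expect a genuine obstacle here: the only point requiring care is the bookkeeping in the two preceding paragraphs that identifies $\ell_1$ with $|B|-3$ and bounds $|A|$ below by $|B|$, both of which rest on surjectivity of $f\restriction_A:A\to B$ together with the fact that regular fibers meet $A$ in at most one point. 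The degenerate case $|B|=3$, where $\mathcal{T}\left(S^2,B\right)$ is a point, is covered trivially by the same inequality.
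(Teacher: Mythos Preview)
Your proof is correct and follows essentially the same route as the paper: both apply Theorem \ref{rank bound} (ignoring $\ell_2$), establish $\ell_1=|B|-3$ and $|A|\geq|B|$, and conclude that the rank is at least $|B|-3$ everywhere. You simply supply the bookkeeping for those two claims, which the paper asserts without justification.
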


\begin{corollary}\label{submersion}
If  $f\restriction_A:A\rightarrow B$ is injective, and $A$ contains at most three points from the critical fibers of $f$, then $\sigma_f$ is a submersion.
\end{corollary}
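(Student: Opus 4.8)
The plan is to read this off Theorem~\ref{rank bound}: I will show that the quantity $\ell_1$ alone already reaches the dimension $|A|-3$ of the codomain $\mathcal{T}\left(S^2,A\right)$, which forces ${\rm{D}}\sigma_f(\tau)$ to be surjective at every $\tau$. Since a holomorphic map is a submersion precisely when its derivative is everywhere surjective, and since ${\rm{T}}_{\sigma_f(\tau)}\mathcal{T}\left(S^2,A\right)$ has dimension $|A|-3$, it is enough to establish ${\rm{rank}}\,{\rm{D}}\sigma_f(\tau)\geq |A|-3$; by \eqref{inequality1} this in turn follows from $\ell_1\geq|A|-3$, because then $\min\{\ell_1+\ell_2,|A|-3\}=|A|-3$.

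To bound $\ell_1$ from below I would produce an injection from $A_0:=\{a\in A\mid f(a)\notin{\rm{V}}_f\}$ into the set counted by $\ell_1$ in \eqref{ell1}, namely $\{b\in B\mid b\text{ is not a critical value of }f\text{ and }|A\cap f^{-1}(b)|=1\}$, by sending $a\mapsto f(a)$. This lands where claimed: $f(a)\in B$ since $f(A)\subseteq B$; $f(a)$ is not a critical value by the definition of $A_0$; and $|A\cap f^{-1}(f(a))|=1$ because $f\restriction_A$ is injective, so the only point of $A$ over $f(a)$ is $a$ itself. That same injectivity of $f\restriction_A$ makes the assignment $a\mapsto f(a)$ injective on $A_0$. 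Hence $\ell_1\geq|A_0|$. Now the hypothesis that $A$ meets the critical fibers of $f$ in at most three points is precisely the statement $|A\setminus A_0|\leq 3$, so $|A_0|\geq|A|-3$ and therefore $\ell_1\geq|A|-3$, which completes the argument.

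I do not anticipate any real difficulty here: all the work is packaged in Theorem~\ref{rank bound}, and what remains is a short counting argument whose only subtlety is that injectivity of $f\restriction_A$ gets invoked twice — once so that the fibers of $A$-points are singletons, once so that distinct $A$-points have distinct $f$-images. For bookkeeping one should note that when $|A|=3$ the statement is vacuous, as $\mathcal{T}\left(S^2,A\right)$ is then a point. Finally, it is worth observing that the same estimate actually yields the stronger equality ${\rm{rank}}\,{\rm{D}}\sigma_f(\tau)=|A|-3$ at every point, and that the contribution $\ell_2$ played no role at all.
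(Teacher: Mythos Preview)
Your proposal is correct and follows exactly the paper's approach: the paper's proof simply asserts $\ell_1\geq|A|-3$ and applies Theorem~\ref{rank bound}, and you have spelled out the counting argument behind that inequality in full. There is nothing to add.
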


\begin{proof}[Proofs of Corollaries \ref{immersion} and \ref{submersion}]
They both follow from ${\rm{rank}}\, {\rm{D}}\sigma_f\geq\min\{\ell_1,|A|-3\}$ with $\ell_1$ as in Theorem \ref{rank bound} (see \eqref{ell1}). In the case of the former corollary, $\ell_1=|B|-3$ and $|A|\geq |B|$; thus,  
${\rm{rank}}\, {\rm{D}}\sigma_f\geq |B|-3=\dim\mathcal{T}\left(S^2,B\right)$ at all points.
In the case of the latter, $\ell_1\geq |A|-3$; thus,  
${\rm{rank}}\, {\rm{D}}\sigma_f\geq |A|-3=\dim\mathcal{T}\left(S^2,A\right)$ at all points.  
\end{proof}

We now turn to the dynamical setting which is concerned with a Thurston map
$f:\left(S^2,{\rm{P}}_f\right)\rightarrow\left(S^2,{\rm{P}}_f\right)$
and its associated pullback 
$\sigma_f:\mathcal{T}\left(S^2,{\rm{P}}_f\right)\rightarrow\mathcal{T}\left(S^2,{\rm{P}}_f\right)$. 
\begin{corollary}\label{periodic bound}
Let $f:S^2\rightarrow S^2$ be a Thurston map  whose critical values are all periodic, and suppose $f$ has at least three critical values. Then, at any point $\tau$ of Teichm\"uller space, one has
$$
{\rm{rank}}\,{\rm{D}}\sigma_f(\tau)\geq |{\rm{P}}_f|-|{\rm{V}}_f|.
$$ 
\end{corollary}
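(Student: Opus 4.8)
The plan is to reduce the statement directly to Theorem \ref{rank bound} applied with $A=B={\rm{P}}_f$, after observing that the hypothesis ``all critical values periodic'' pins down the combinatorial quantity $\ell_1$ to its simplest possible value.

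First I would record the structural consequence of periodicity: if every $v\in{\rm{V}}_f$ is periodic, then its forward orbit under $f$ is the finite cycle through $v$, so ${\rm{P}}_f=\bigcup_{n\ge 0}f^{\circ n}({\rm{V}}_f)$ is a disjoint union of these periodic cycles (two periodic orbits sharing a point coincide). Consequently $f\restriction_{{\rm{P}}_f}:{\rm{P}}_f\to{\rm{P}}_f$ is a bijection, acting as a cyclic shift on each constituent cycle; in particular, for every $b\in{\rm{P}}_f$ the fiber $f^{-1}(b)$ meets $A={\rm{P}}_f$ in exactly one point, namely the predecessor of $b$ in its cycle. Equivalently, $\mathcal{G}_f$ is a disjoint union of directed cycles.

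Next I would unwind the definition \eqref{ell1} of $\ell_1$ for the cover $f:\left(S^2,{\rm{P}}_f\right)\rightarrow\left(S^2,{\rm{P}}_f\right)$. Since, by the previous step, the condition $|A\cap f^{-1}(b)|=1$ holds for \emph{every} $b\in B={\rm{P}}_f$, the set in \eqref{ell1} is simply $\{b\in{\rm{P}}_f\mid b\notin{\rm{V}}_f\}$, so, using ${\rm{V}}_f\subseteq{\rm{P}}_f$,
$$
\ell_1=|{\rm{P}}_f\setminus{\rm{V}}_f|=|{\rm{P}}_f|-|{\rm{V}}_f|.
$$
Theorem \ref{rank bound} then gives, at any Teichm\"uller point $\tau$,
$$
{\rm{rank}}\,{\rm{D}}\sigma_f(\tau)\ \ge\ \min\{\ell_1+\ell_2,\ |{\rm{P}}_f|-3\}\ \ge\ \min\{\ell_1,\ |{\rm{P}}_f|-3\}.
$$
Finally, the hypothesis $|{\rm{V}}_f|\ge 3$ yields $\ell_1=|{\rm{P}}_f|-|{\rm{V}}_f|\le|{\rm{P}}_f|-3$, so the minimum above equals $\ell_1=|{\rm{P}}_f|-|{\rm{V}}_f|$, which is the asserted bound. (When $|{\rm{P}}_f|=3$ one has $|{\rm{V}}_f|=3$ and both sides vanish, consistently with $\dim\mathcal{T}\left(S^2,{\rm{P}}_f\right)=0$.)

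I do not expect a genuine obstacle here: the whole argument is a specialization of Theorem \ref{rank bound}, and the only point demanding a moment's care is the structural claim that periodicity of the critical values forces $f\restriction_{{\rm{P}}_f}$ to be a bijection (so that every postcritical fiber meets ${\rm{P}}_f$ exactly once). Once that is in place, discarding the nonnegative term $\ell_2$ and using $|{\rm{V}}_f|\ge 3$ to resolve the minimum finishes the proof.
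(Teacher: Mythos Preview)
Your proof is correct and follows essentially the same route as the paper: both observe that periodicity of all critical values makes $f\restriction_{{\rm{P}}_f}$ a bijection, so $\ell_1=|{\rm{P}}_f|-|{\rm{V}}_f|$, and then invoke Theorem~\ref{rank bound} together with $|{\rm{V}}_f|\ge 3$ to resolve the minimum. The only difference is cosmetic---you spell out the bijectivity argument in slightly more detail.
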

\begin{proof}
All critical values of $f$ are periodic if and only if $f\restriction_{{\rm{P}}_f}:{\rm{P}}_f\rightarrow{\rm{P}}_f$ is a bijection. For any $p\in{\rm{P}}_f-{\rm{V}}_f$, the intersection $f^{-1}(p)\cap{\rm{P}}_f$ is of size one. Therefore, $\ell_1$ from \eqref{ell1} is equal to $|{\rm{P}}_f|-|{\rm{V}}_f|$. Theorem \ref{rank bound} now implies that 
$$
{\rm{rank}}\,{\rm{D}}\sigma_f(\tau)\geq\min\{|{\rm{P}}_f|-|{\rm{V}}_f|,|{\rm{P}}_f|-3\}=|{\rm{P}}_f|-|{\rm{V}}_f|.
$$
\end{proof}

\begin{table}[hbt!]
\begin{tabular}{|p{12cm}|}
\hline
\vspace{1mm}\hfil
\xymatrix
{\underset{v}{\bullet}\ar[r] & \cdots\ar[r]  & \bullet\ar@/^1pc/[ll]& \underset{\infty}{\bullet}\ar@(dr,ur) }\vspace{1mm}\\
\hline
\vspace{1mm}\hfil
\xymatrixcolsep{2.5pc}\xymatrix
{\underset{v}{\bullet}\ar[r] & \cdots \ar[r]& \bullet\ar[r]& \cdots\ar[r]  & \bullet\ar@/^1pc/[ll]& \underset{\infty}{\bullet}\ar@(dr,ur) }\vspace{1mm}
\\
\hline
\end{tabular}
\vspace{2mm}
\caption{Possible postcritical dynamics for a unicritical polynomial $f$ (cf. Corollary \ref{unicritical}). The totally ramified fixed point is shown by $\infty$ and the other critical value by $v$. At the top, $v$ is periodic while it is strictly preperiodic at the bottom.}
\label{Tab:unicritical}
\end{table}

The last corollary can be applied to \textit{topological polynomials}; that is, Thurston maps $f:S^2\rightarrow S^2$ that admit a fixed point $p$ with $f^{-1}(p)=p$. 
\begin{corollary}\label{unicritical}
Let $f$ be a unicritical topological polynomial with  $|{\rm{P}}_f|\geq 3$. The pullback map 
$\sigma_f:\mathcal{T}\left(S^2,{\rm{P}}_f\right)\rightarrow\mathcal{T}\left(S^2,{\rm{P}}_f\right)$ 
is a local isomorphism.
\end{corollary}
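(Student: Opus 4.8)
The plan is to derive this from Theorem~\ref{rank bound} by showing that, for a unicritical topological polynomial, the first quantity $\ell_1$ from \eqref{ell1} is by itself already large enough to force ${\rm{D}}\sigma_f$ to have full rank at every point.

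I would begin by normalizing: place the fixed point with full preimage at $\infty$, so that $f^{-1}(\infty)=\{\infty\}$ and ${\rm{mult}}_\infty(f)=d$. Since $f$ has exactly one further critical point $c_0$, the Riemann--Hurwitz identity $\sum_p\bigl({\rm{mult}}_p(f)-1\bigr)=2d-2$ forces ${\rm{mult}}_{c_0}(f)=d$ as well; hence, writing $v:=f(c_0)$, the fiber $f^{-1}(v)$ is the single point $\{c_0\}$. Thus ${\rm{V}}_f=\{v,\infty\}\subseteq {\rm{P}}_f$, and ${\rm{P}}_f\setminus {\rm{V}}_f$ consists exactly of the non-critical-value elements of the postcritical set, of which there are $|{\rm{P}}_f|-2$.

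The core step is a global counting estimate for $\ell_1$. Because $f({\rm{P}}_f)\subseteq {\rm{P}}_f$, summing fiber sizes over ${\rm{P}}_f$ gives $\sum_{b\in {\rm{P}}_f}\bigl|{\rm{P}}_f\cap f^{-1}(b)\bigr|=|{\rm{P}}_f|$. Discarding the term $b=\infty$ (which contributes $1$) and the term $b=v$ (which contributes $\bigl|{\rm{P}}_f\cap f^{-1}(v)\bigr|\le 1$, by the total ramification at $c_0$), one obtains
\[
\sum_{b\in {\rm{P}}_f\setminus {\rm{V}}_f}\Bigl(\bigl|{\rm{P}}_f\cap f^{-1}(b)\bigr|-1\Bigr)\;=\;1-\bigl|{\rm{P}}_f\cap f^{-1}(v)\bigr|\;\le\;1 .
\]
Every $b\in {\rm{P}}_f\setminus {\rm{V}}_f$ lies in $f({\rm{P}}_f)$ and so has at least one ${\rm{P}}_f$-preimage, so each summand is nonnegative; hence at most one $b\in {\rm{P}}_f\setminus {\rm{V}}_f$ can have two or more ${\rm{P}}_f$-preimages. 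As these $b$ are precisely the non-critical values appearing in \eqref{ell1}, this yields $\ell_1\ge\bigl|{\rm{P}}_f\setminus {\rm{V}}_f\bigr|-1=|{\rm{P}}_f|-3$. The point of phrasing the estimate globally is that it is insensitive to whether $v$ is periodic or strictly preperiodic (the two rows of Table~\ref{Tab:unicritical}), avoiding any case split on the functional graph $\mathcal{G}_f$.

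It then remains to invoke Theorem~\ref{rank bound}: at every $\tau\in\mathcal{T}\left(S^2,{\rm{P}}_f\right)$,
\[
{\rm{rank}}\,{\rm{D}}\sigma_f(\tau)\;\ge\;\min\{\ell_1+\ell_2,\,|{\rm{P}}_f|-3\}\;=\;|{\rm{P}}_f|-3\;=\;\dim\mathcal{T}\left(S^2,{\rm{P}}_f\right),
\]
using $\ell_2\ge 0$. Thus ${\rm{D}}\sigma_f(\tau)$ is an isomorphism between tangent spaces of equal finite dimension at every point, and since $\sigma_f$ is holomorphic, the inverse function theorem shows it is a local isomorphism. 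I do not expect a genuine obstacle beyond Theorem~\ref{rank bound} itself; the only steps needing care are the Riemann--Hurwitz computation pinning down that \emph{both} critical points are totally ramified (which is what makes $\bigl|{\rm{P}}_f\cap f^{-1}(v)\bigr|\le 1$), making sure the interpretation of "unicritical'' places $c_0$ and the polynomial fixed point $\infty$ genuinely apart, and checking consistency in the degenerate range $|{\rm{P}}_f|=3$, where $\dim\mathcal{T}\left(S^2,{\rm{P}}_f\right)=0$ and the assertion is vacuous.
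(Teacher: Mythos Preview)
Your proof is correct and follows essentially the same approach as the paper: bound $\ell_1$ from below by $|{\rm{P}}_f|-3$ and then invoke Theorem~\ref{rank bound}. The one difference is presentational: the paper splits into the two cases of Table~\ref{Tab:unicritical} (periodic versus strictly preperiodic $v$) and reads off $\ell_1$ from the explicit orbit list in each case, whereas your global identity $\sum_{b\in {\rm{P}}_f\setminus {\rm{V}}_f}\bigl(|{\rm{P}}_f\cap f^{-1}(b)|-1\bigr)=1-|{\rm{P}}_f\cap f^{-1}(v)|$ handles both cases at once and makes transparent exactly why at most one non-critical postcritical value can have two ${\rm{P}}_f$-preimages. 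This is a mild streamlining rather than a different idea; the paper's case split has the compensating virtue of exhibiting the postcritical portraits explicitly.
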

\begin{proof}
The Thurston map $f$ admits a totally ramified fixed point, which (in analogy with complex polynomials) we denote by $\infty$, and another critical value $v$ which is eventually periodic. The orbit of $v$ along with $\infty$ comprise the postcritical set ${\rm{P}}_f$. As demonstrated in  Table \ref{Tab:unicritical}, there are two possibilities for $v$: it is either periodic or strictly preperiodic. In the former case $f^{\circ n}(v)=v$ whereas in the latter $f^{\circ n}(v)=f^{\circ m}(v)$ where $m,n$ are appropriate positive integers with $m>n$. The postcritical set is 
${\rm{P}}_f=\left\{v,f(v),\dots,f^{\circ (n-1)}(v),\infty\right\}$    
or 
${\rm{P}}_f=\left\{v,f(v),\dots,f^{\circ n}(v),\dots,f^{\circ (m-1)}(v),\infty\right\}$    
respectively. Thus the number $\ell_1$ (see \eqref{ell1}) which counts  regular values in ${\rm{P}}_f$ 
that have precisely one preimage in 
${\rm{P}}_f$ is at least $|{\rm{P}}_f|-3$; postcritical points without this property are $v,\infty$ in the first situation and $v,f^{\circ n}(v),\infty$ in the second (cf. the portraits from Table \ref{Tab:unicritical}). In each of these cases inequality \eqref{inequality1} from Theorem \ref{rank bound} implies that the rank of $\sigma_f$ is equal to 
$\dim\mathcal{T}\left(S^2,{\rm{P}}_f\right)=|{\rm{P}}_f|-3$ at every Teichm\"uller point. 
\end{proof}
\noindent
The corollary above can also be obtained from the stronger result \cite[Corollary 5.3]{MR3107522} which indicates that for a topological polynomial $f$ the pullback $\sigma_f$ induces a map in the opposite direction on the level of moduli spaces. 

We next focus on Thurston maps $f:\left(S^2,{\rm{P}}_f\right)\rightarrow\left(S^2,{\rm{P}}_f\right)$
whose associated pullback maps 
$\sigma_f:\mathcal{T}\left(S^2,{\rm{P}}_f\right)\rightarrow\mathcal{T}\left(S^2,{\rm{P}}_f\right)$ 
are constant or, more generally, of rank zero at a point.  The first corollary we prove follows instantly from Theorem \ref{rank bound} whereas the second one requires a combinatorial analysis of 
$f\restriction_{{\rm{P}}_f}:{\rm{P}}_f\rightarrow {\rm{P}}_f$
which is carried out in Proposition \ref{combinatorics}.

\begin{proof}[Proof of Corollary \ref{ell2 corollary}]
By the hypotheses, $f^{-1}(v)$ does not intersect ${\rm{P}}_f$ and contains exactly one critical point, which is simple. The number $\ell_2$ associated with the branched cover $f:\left(S^2,{\rm{P}}_f\right)\rightarrow\left(S^2,{\rm{P}}_f\right)$
as in \eqref{ell2} is thus positive. Theorem \ref{rank bound} then implies that 
${\rm{rank}}\, {\rm{D}}\sigma_f\geq\min\{\ell_2,|{\rm{P}}_f|-3\}\geq 1$ globally. 
\end{proof}

\begin{proof}[Proof of Corollary \ref{main corollary}]
The inequality $|{\rm{P}}_f|\leq d+2$ already appears Theorem \ref{degree bound}. If $p\in {\rm{P}}_f$ is not a critical value, then, being a postcritical point, it must be an iterate of a critical value. Thus it can be written as $f(a)$ for a suitable $a\in {\rm{P}}_f$. 
Applying Corollary \ref{ell1 corollary} to $f:\left(S^2,{\rm{P}}_f\right)\rightarrow\left(S^2,{\rm{P}}_f\right)$ then indicates that $p$ can also be realized as $f(a')$ where $a'\in {\rm{P}}_f-\{a\}$. As such, one needs to show  that $|f^{-1}(p)\cap {\rm{P}}_f|\geq 2$ for all
$p\in {\rm{P}}_f-{\rm{V}}_f$  results in $|{\rm{P}}_f|\leq 2|{\rm{V}}_f|$. This follows from  Proposition \ref{combinatorics} below.
\end{proof}
\noindent
The preceding proof relies on the next proposition which illuminates the dynamics on ${\rm{P}}_f$. 
\begin{proposition}\label{combinatorics}
Let $f:S^2\rightarrow S^2$ be a Thurston map. 
Suppose for any $p\in {\rm{P}}_f- {\rm{V}}_f$ one has $|f^{-1}(p)\cap {\rm{P}}_f|\geq 2$. 
Then 
\begin{equation}\label{inequality2'}
|{\rm{P}}_f|\leq 2\,|{\rm{V}}_f|.
\end{equation}
The equality is achieved if and only if for every critical value $v$ of $f$
\begin{itemize}
\item $f^{\circ n}(v)$ is not a critical value of $f$ for any $n\geq 1$;
\item $\left|f^{-1}\left(f^{\circ n}(v)\right)\cap {\rm{P}}_f\right|=2$  for any $n\geq 1$.
\end{itemize} 
\end{proposition}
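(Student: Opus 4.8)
The plan is to reduce the whole statement to an elementary double count on the functional graph $\mathcal{G}_f$ attached to $f\restriction_{{\rm{P}}_f}\colon{\rm{P}}_f\rightarrow{\rm{P}}_f$. First I would record the structural facts: since ${\rm{V}}_f=f^{\circ 0}({\rm{V}}_f)\subseteq{\rm{P}}_f$ and $f({\rm{P}}_f)\subseteq{\rm{P}}_f$, the map $f$ restricts to a self-map of the finite set ${\rm{P}}_f$, so in $\mathcal{G}_f$ every vertex has outdegree exactly $1$ and hence the number of edges equals $|{\rm{P}}_f|$. Summing indegrees over all vertices gives $\sum_{p\in{\rm{P}}_f}{\rm{deg}}^-(p)=|{\rm{P}}_f|$, where ${\rm{deg}}^-(p)=\left|f^{-1}(p)\cap{\rm{P}}_f\right|$.

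Next I would split the vertex set as ${\rm{P}}_f={\rm{V}}_f\sqcup({\rm{P}}_f-{\rm{V}}_f)$ and feed in the hypothesis. On ${\rm{P}}_f-{\rm{V}}_f$ we have ${\rm{deg}}^-(p)\geq 2$, while on ${\rm{V}}_f$ we use only ${\rm{deg}}^-(v)\geq 0$. Hence
\[
|{\rm{P}}_f|=\sum_{v\in{\rm{V}}_f}{\rm{deg}}^-(v)+\sum_{p\in{\rm{P}}_f-{\rm{V}}_f}{\rm{deg}}^-(p)\geq 2\,|{\rm{P}}_f-{\rm{V}}_f|=2\bigl(|{\rm{P}}_f|-|{\rm{V}}_f|\bigr),
\]
which rearranges to \eqref{inequality2'}.

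For the equality case I would observe that \eqref{inequality2'} is tight precisely when both inequalities above are, i.e. when ${\rm{deg}}^-(v)=0$ for every $v\in{\rm{V}}_f$ and ${\rm{deg}}^-(p)=2$ for every $p\in{\rm{P}}_f-{\rm{V}}_f$; the remaining task is to rephrase these two conditions. A point $w\in{\rm{P}}_f$ has ${\rm{deg}}^-(w)\geq 1$ iff $w=f(q)$ for some $q\in{\rm{P}}_f$, and writing $q=f^{\circ(n-1)}(v)$ with $v\in{\rm{V}}_f$, $n\geq 1$, this holds iff $w=f^{\circ n}(v)$ for some critical value $v$ and some $n\geq 1$. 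Thus ``${\rm{deg}}^-(v)=0$ for all $v\in{\rm{V}}_f$'' is exactly the assertion that no $f^{\circ n}(v)$ $(v\in{\rm{V}}_f,\ n\geq 1)$ is a critical value, which is the first bulleted condition. Granting that condition, the iterates $f^{\circ n}(v)$ $(n\geq 1)$ avoid ${\rm{V}}_f$, so ${\rm{P}}_f-{\rm{V}}_f=\{f^{\circ n}(v)\mid v\in{\rm{V}}_f,\ n\geq 1\}$, and ``${\rm{deg}}^-(p)=2$ for all $p\in{\rm{P}}_f-{\rm{V}}_f$'' becomes $\left|f^{-1}\bigl(f^{\circ n}(v)\bigr)\cap{\rm{P}}_f\right|=2$ for all $v\in{\rm{V}}_f$, $n\geq 1$ — the second bulleted condition. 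Conversely, the two bulleted conditions force ${\rm{deg}}^-(v)=0$ on ${\rm{V}}_f$ and ${\rm{deg}}^-(p)=2$ on ${\rm{P}}_f-{\rm{V}}_f$, hence equality in \eqref{inequality2'}.

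The inequality is a one-line handshake argument, so the only genuine work — and the step most prone to slips — is the bookkeeping in the equality case: checking that, once the first bulleted condition is imposed, ${\rm{P}}_f-{\rm{V}}_f$ really is the set of forward iterates $f^{\circ n}(v)$ with $n\geq 1$, and verifying that the characterization of indegree-zero vertices of $\mathcal{G}_f$ correctly captures the $n=1$ case, where a critical value would arise as the $f$-image of another critical value.
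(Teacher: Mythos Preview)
Your argument is correct and considerably more efficient than the paper's. The paper proceeds component by component: each connected component of $\mathcal{G}_f$ is a directed cycle $\mathcal{C}$ with rooted trees $\mathcal{T}_1,\dots,\mathcal{T}_s$ hanging off it, and one bounds the number of $v$-marked vertices in each $\mathcal{T}_i$ separately via \eqref{auxiliary1} before reassembling in \eqref{auxiliary2}. You bypass this decomposition entirely with a single global handshake: $\sum_{p\in{\rm{P}}_f}{\rm{deg}}^-(p)=|{\rm{P}}_f|$ combined with ${\rm{deg}}^-\geq 2$ on ${\rm{P}}_f-{\rm{V}}_f$ and ${\rm{deg}}^-\geq 0$ on ${\rm{V}}_f$ gives \eqref{inequality2'} in one line, and the equality case falls out immediately as ``${\rm{deg}}^-=0$ on ${\rm{V}}_f$ and ${\rm{deg}}^-=2$ elsewhere'', which you then correctly translate into the two bulleted conditions using $f({\rm{P}}_f)=\bigcup_{n\geq 1}f^{\circ n}({\rm{V}}_f)$.

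What the paper's longer route buys is a finer structural picture in the equality case --- it shows that $\mathcal{G}_f$ then consists of cycles with full binary trees attached, the $v$-marked vertices being exactly the leaves --- but that description is not part of the statement of Proposition~\ref{combinatorics}, so for the proposition as stated your proof is complete. Your closing caveats about bookkeeping are well placed but the verifications go through: every $q\in{\rm{P}}_f$ is indeed some $f^{\circ(n-1)}(v)$ with $n\geq 1$, and once the first bullet holds the set $\{f^{\circ n}(v):v\in{\rm{V}}_f,\ n\geq 1\}$ is disjoint from ${\rm{V}}_f$ and hence equals ${\rm{P}}_f-{\rm{V}}_f$.
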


\begin{figure}[ht!]
\center
\includegraphics[width=12cm,height=6cm]{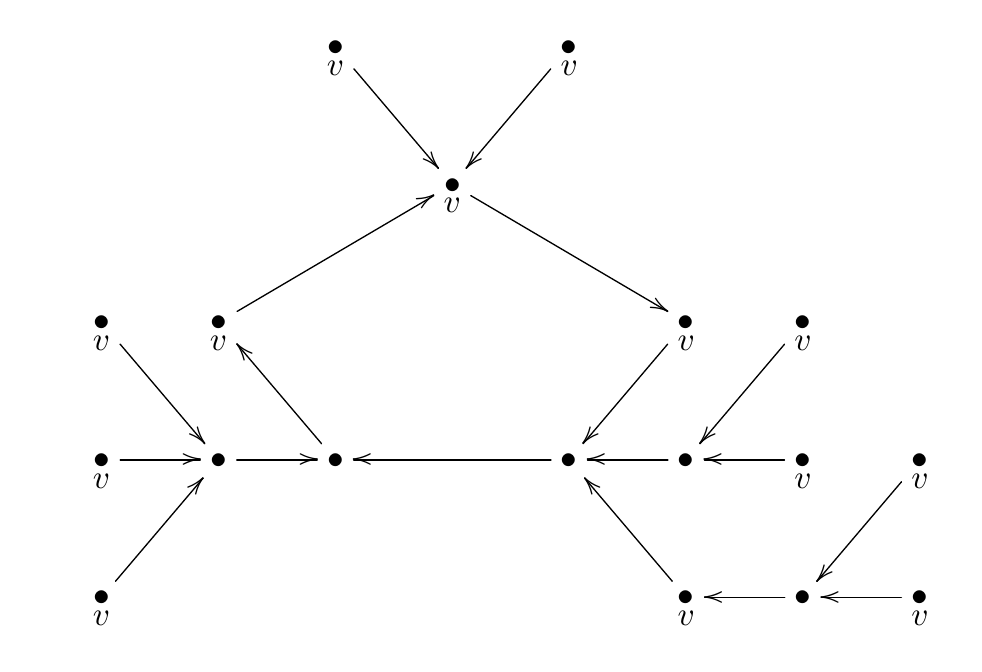}
\caption{An illustration of  a connected component $\mathcal{G}'$ of the functional graph $\mathcal{G}_f$ for the postcritical dynamics  $f\restriction_{{\rm{P}}_f}:{\rm{P}}_f\rightarrow {\rm{P}}_f$.
It is the union of a directed cycle 
 (i.e. a periodic orbit of $f\restriction_{{\rm{P}}_f}:{\rm{P}}_f\rightarrow {\rm{P}}_f$) and possibly some rooted trees    whose roots lie on the cycle (they capture those postcritical points that eventually land on the aforementioned periodic orbit after repeated applications of $f$).
The vertices marked by $v$ correspond to critical values; that is, elements of ${\rm{V}}_f\subseteq{\rm{P}}_f$. 
The assumption here is that ${\rm{D}}\sigma_f$ vanishes somewhere, so the indegree of any vertex not marked by $v$ is at least two  due to Corollary \ref{ell1 corollary}. }
\label{fig:component}
\end{figure}

\begin{proof}
The proof is combinatorial and is based on analyzing the functional graph of 
$f\restriction_{{\rm{P}}_f}:{\rm{P}}_f\rightarrow {\rm{P}}_f$ which we denote by $\mathcal{G}_f$. 
This is a finite directed graph whose vertices are points of ${\rm{P}}_f=\bigcup_{n\geq 0}f^{\circ n}({\rm{V}}_f)$ with an arrow from $p$ to $q$ if and only if $f(p)=q$. We mark those vertices that correspond to critical values of $f$ (i.e. elements of ${\rm{V}}_f$) by $v$. The graph $\mathcal{G}_f$ should satisfy the following properties:
\begin{itemize}
\item The outdegree of each vertex is exactly $1$;
\item vertices with indegree $0$ are marked by $v$;
\item if a vertex is not marked by $v$, then its indegree should be larger than $1$.
\end{itemize}
To prove \eqref{inequality2'}, it suffices to show that in each connected component of $\mathcal{G}_f$, the number of vertices marked by $v$ is at least half the number of vertices in the component. 
Let $\mathcal{G}'$ be a connected component of $\mathcal{G}_f$. It consists of a periodic orbit of  $f\restriction_{{\rm{P}}_f}:{\rm{P}}_f\rightarrow {\rm{P}}_f$ and all the points of ${\rm{P}}_f$ that eventually land on it under iteration; see Figure \ref{fig:component}. \\
\indent
We have
\begin{equation}\label{auxiliary1}
\begin{split}
\#\text{vertices of } \mathcal{G}' \text{ not marked by }v 
&\leq\#\text{vertices of } \mathcal{G}' \text{ with indegree larger than }1\\ 
&\leq\frac{1}{2}\left(\#\text{vertices of }\mathcal{G}'\right)
\end{split}
\end{equation}
where the first inequality follows from the properties mentioned above while the second one is because the sum of all indegrees, which is equal to the sum of all outdegrees, is the same as the number of vertices in hand.
We have thus established  
$$
\#\text{vertices of }\mathcal{G}'\leq 2\left(\#\text{vertices of } \mathcal{G}' \text{ marked by }v \right)
$$
which results in the desired inequality $|{\rm{P}}_f|\leq 2\,|{\rm{V}}_f|$. The equality is achieved if and only if one has equality in \eqref{auxiliary1} for all connected components $\mathcal{G}'$ of $\mathcal{G}_f$. This happens precisely when vertices marked by $v$ are of indegree $0$ and the rest are of indegree $2$; namely, the conditions described in the proposition. 
In terms of the functional graph, notice that this means each component  consists a directed cycle each vertex of which has an incoming edge from a directed full binary tree; and the leaves of those trees are the only vertices  marked by $v$.
\end{proof}

\begin{remark}\label{Parry}
Corollary \ref{main corollary} provides upper bounds for the size of the postcritical set of a Thurston map with constant pullback in terms of both its degree and the number of its critical values. 
Sharper bounds are available from unpublished results of Floyd, Parry and Saenz. The arguments therein are of a geometric flavor as opposed to the algebraic/combinatorial nature of techniques employed here.  
To state those results, suppose $f$ is a Thurston map of degree $d$ whose pullback $\sigma_f$ is constant. 
Then \eqref{inequality2} yields $|P_f|\leq 2|V_f|$. Indeed, it is true that $|P_f|\leq |V_f|+1$ \cite{MRParry1}.
Moreover, $|P_f|=O(d)$, also implied by \eqref{inequality2}, can be replaced with  $|P_f|=O\left(\sqrt{d}\right)$ if $f$ is a topological polynomial \cite{MRParry2}.  
\end{remark}

\begin{remark}\label{polynomial}
In the case of topological polynomials, the inequality can be improved to
\begin{equation}\label{inequality2''}
|{\rm{P}}_f|\leq 2\left(\#\textit{finite critical values of } f\right) -1.
\end{equation}
\end{remark}

\section{Applications to maps with constant pullback}

\subsection{Branched covers with three critical values}
In this subsection, we discuss when for an admissible cover $f:\left(S^2,A\right)\rightarrow\left(S^2,B\right)$ with precisely three critical values the pullback map $\sigma_f$ can be constant. 
Rational maps with  three critical values have been studied extensively because of the celebrated Belyi Theorem  \cite{MR697314,MR1913596}. We shall utilize the following feature of such rational maps: once the three critical values on the Riemann sphere are prescribed, then, up to precomposition with holomorphic automorphisms, there are only finitely many rational maps with those critical values in any given degree. 
Indeed, \textit{Belyi maps} -- holomorphic maps $\Sigma\rightarrow\hat{\Bbb{C}}$ defined on a compact Riemann surface $\Sigma$ with critical values in $\{0,1,\infty\}$  -- can be encoded by discrete objects called \textit{dessins d'enfants}; see \cite{MR2895884,MR2036721} for a background on this rich theory. 

\begin{proposition}\label{Belyi}
Let $f:(S^2,A)\rightarrow (S^2,B)$ be an admissible cover with exactly three critical values satisfying $|A|\geq 4$ and $f(A)\nsubseteq{\rm{V}}_f$, and suppose the associated pullback map $\sigma_f$ is constant. 
Then $|A\cap f^{-1}({\rm{V}}_f)|<3$. Moreover, 
if $|A\cap f^{-1}({\rm{V}}_f)|=2$, the branched cover 
$f:S^2\rightarrow S^2$ admits a nontrivial deck transformation.
Finally, if the branched cover $f$ is a rational map  $(\hat{\Bbb{C}},A)\rightarrow (\hat{\Bbb{C}},B)$,
then for any injection $j:B\hookrightarrow\hat{\Bbb{C}}$, there exists a Möbius  transformation $\nu$ that for any $b\in B$ injects $A\cap f^{-1}(b)$ into the fiber $f^{-1}(j(b))$.  
\end{proposition}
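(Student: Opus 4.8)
The plan is to exploit the rigidity of rational maps with three critical values (the Belyi phenomenon alluded to before the proposition), together with the hypothesis that $\sigma_f$ is constant, which via the correspondence diagram \eqref{correspondence} forces $\rho_A\colon\mathcal{W}_f\to\mathcal{M}_{0,A}$ to be constant. First I would recall that constancy of $\sigma_f$ is equivalent to $\rho_A$ being a constant map, so the image of $\rho_A$ is a single point of $\mathcal{M}_{0,A}$; meanwhile $\rho_B\colon\mathcal{W}_f\to\mathcal{M}_{0,B}$ is a finite surjective covering. Thus for \emph{every} marking $j\colon B\hookrightarrow\hat{\Bbb{C}}$ there is a point $w\in\mathcal{W}_f$ over $[j]\in\mathcal{M}_{0,B}$, i.e. a rational map $g\colon\hat{\Bbb{C}}\to\hat{\Bbb{C}}$ Hurwitz equivalent to $f$ realizing the bottom row of \eqref{diagram} with $\phi(B)=j(B)$ (after normalizing the Möbius ambiguity in the codomain so that $\phi$ restricts to $j$ on $B$); and the class $\rho_A(w)$ of the induced injection $A\hookrightarrow\hat{\Bbb{C}}$ is the \emph{same} point $m_0\in\mathcal{M}_{0,A}$ regardless of $j$. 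Fixing once and for all a representative injection $\iota\colon A\hookrightarrow\hat{\Bbb{C}}$ of $m_0$, this says precisely: for each $j$ there is a rational map $g_j$ and a Möbius $\mu_j$ with $g_j\circ\iota = \mu_j\circ j\circ (f\restriction_A)$ up to the commutative diagram, so the configuration $g_j(\iota(a))$ is Möbius-equivalent to $j(f(a))$. I would then unwind this to produce the desired $\nu$: since $\iota(A)$ is a fixed configuration and $g_j$ maps it into $g_j^{-1}$-fibers compatibly with $f$, the map $b\mapsto$ (the point of $\hat{\Bbb{C}}$ to which $g_j$ sends $\iota(a)$ for $a\in A\cap f^{-1}(b)$) agrees with $j$ up to a single Möbius transformation, and that Möbius transformation, post-composed appropriately, is the $\nu$ sending $A\cap f^{-1}(b)\hookrightarrow f^{-1}(j(b))$ for all $b\in B$ simultaneously.

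For the first two assertions I would argue by contradiction using Theorem~\ref{rank bound} and the deck-transformation structure. If $|A\cap f^{-1}({\rm{V}}_f)|\ge 3$, I expect to combine this with $f(A)\nsubseteq{\rm{V}}_f$ (so there is at least one $a\in A$ with $f(a)\notin{\rm{V}}_f$) and count: the points of $A$ not over critical values contribute to $\ell_1$ unless they share a fiber, and one shows that constancy of $\sigma_f$ forces every such $a$ to share its fiber with another point of $A$, which together with the three points over ${\rm{V}}_f$ overfills $f^{-1}(B)$ relative to the Riemann–Hurwitz count $|f^{-1}(B)|=d+2$ in a way contradicting $|A|\le d+2$ from Theorem~\ref{degree bound} — or more directly, it contradicts the finiteness/rigidity statement proved in the last part, because three prescribed points over the three critical values leave no room to vary $j$ freely while keeping $\rho_A$ constant. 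For the case $|A\cap f^{-1}({\rm{V}}_f)|=2$: having exactly two marked points in the critical fibers, the induced map on moduli spaces being constant while $\rho_B$ is a nontrivial finite cover should force the Hurwitz space $\mathcal{W}_f$ to carry a symmetry; concretely, two distinct points $w,w'\in\mathcal{W}_f$ over the same generic $[j]$ give rational maps $g,g'$ with the same configuration $\iota(A)$ upstairs and the same three critical values downstairs, and by Belyi rigidity (finitely many maps of given degree with given three critical values) the monodromy permuting these sheets, transported through the diagram, yields a nontrivial automorphism of the branched cover $f\colon S^2\to S^2$ commuting with $f$, i.e. a nontrivial element of ${\rm{Deck}}(f)$.

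The main obstacle, I expect, is the bookkeeping in passing between the three incarnations of the data — the topological cover $f\restriction_A$, the Hurwitz space $\mathcal{W}_f$ with its two projections, and an actual normalized rational representative $g_j$ — and in particular pinning down the Möbius ambiguities so that the \emph{single} transformation $\nu$ works \emph{simultaneously} for all $b\in B$ rather than one $\nu_b$ per fiber. The resolution is that $\rho_A$ being constant gives one global class $m_0$, not a class per fiber, so the rigid configuration $\iota(A)$ already encodes all fibers at once; carefully, one must check that the normalization chosen to kill the codomain Möbius ambiguity of $g_j$ (using that $|B|\ge 3$, so $j$ is rigid up to nothing once three of its points are fixed) is compatible with the domain normalization fixing $\iota$, and that the leftover freedom is exactly one Möbius transformation, which is $\nu$. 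The deck-transformation claim's obstacle is ensuring the sheet-permutation of $\mathcal{W}_f\to\mathcal{M}_{0,B}$ really descends to an automorphism of the \emph{topological} cover and not merely to an abstract identification; here one uses that Hurwitz equivalence classes are rigid (Belyi) so the two representatives differ by a genuine conformal — hence topological — automorphism respecting $f$.
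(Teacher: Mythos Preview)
Your overall framework is right: constancy of $\sigma_f$ is equivalent to constancy of $\rho_A$, and combining this with surjectivity of $\rho_B$ and the Belyi finiteness phenomenon is exactly how the paper proceeds. But two of your three arguments have genuine gaps, and you are missing the one technical reduction that makes everything work cleanly.

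\textbf{The missing reduction.} After normalizing so that the three critical values are fixed at $j_0(b_1),j_0(b_2),j_0(b_3)$, Belyi finiteness says there are only finitely many choices for $g$ up to precomposition. The paper then uses \emph{connectedness of $\mathcal{W}_f$} to conclude that this class is constant: every point of $\mathcal{W}_f$ has a representative with $g=g_0$. This lets one describe $\mathcal{W}_f$ concretely as (a component of) pairs $(i,j)$ with $j\circ f\restriction_A=g_0\circ i$, modulo ${\rm Deck}(g_0)$. You gesture at Belyi rigidity but never make this reduction; without it your bookkeeping for part 3 stays muddled (your displayed relation $g_j\circ\iota=\mu_j\circ j\circ(f\restriction_A)$ is not quite the right one), and parts 1 and 2 cannot be carried out.

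\textbf{Part 1.} Your Riemann--Hurwitz/Theorem~\ref{degree bound} counting idea does not lead anywhere. Your second idea (``no room to vary $j$'') is the correct one, but the mechanism is missing: once $g=g_0$ is fixed, any $a\in A\cap f^{-1}({\rm V}_f)$ has $i(a)$ lying in the \emph{finite} set $g_0^{-1}(j_0(f(a)))$, which is independent of $(i,j)$; by connectedness, $i(a)=i_0(a)$ is frozen. Three frozen values together with $[i]=[i_0]$ force $i=i_0$ identically, and then for any $a\in A\setminus f^{-1}({\rm V}_f)$ (which exists since $f(A)\nsubseteq{\rm V}_f$) the value $j(f(a))=g_0(i_0(a))$ is pinned, contradicting surjectivity of $\rho_B$.

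\textbf{Part 2.} This is where your proposal has a real gap. You want to extract a deck transformation from the monodromy of the cover $\rho_B$, comparing two sheets $w,w'$ over a generic $[j]$. But with $g=g_0$ fixed, two such sheets give $(i,j)$ and $(i',j)$ with $i'=\mu\circ i$ for some M\"obius $\mu$; the compatibility $g_0\circ i=g_0\circ i'$ only yields $g_0\circ\mu=g_0$ on the finite set $i(A)$, not globally, and $|A|\ge 4$ is far too small to force equality of degree-$d$ rational maps. (There is also no guarantee that $\rho_B$ has degree $>1$.) The paper's argument is completely different and uses the hypothesis $f(A)\nsubseteq{\rm V}_f$ together with Corollary~\ref{ell1 corollary} in an essential way: with the two points $a_1,a_2\in A\cap f^{-1}({\rm V}_f)$ frozen as above, pick a regular value $b_4\in f(A)\setminus{\rm V}_f$; constancy of $\sigma_f$ forces $|A\cap f^{-1}(b_4)|\ge 2$, so choose $a_3,a_4$ there. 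Now perturb $j_0(b_4)\mapsto u$ and lift to branches $p_1(u),p_2(u)\in g_0^{-1}(u)$ through $i_0(a_3),i_0(a_4)$. Constancy of $[i]$ forces the cross-ratio $T(i_0(a_1),i_0(a_2),p_1(u),p_2(u))$ to be constant in $u$, hence $p_2(u)=\nu(p_1(u))$ for a single M\"obius $\nu\neq{\rm id}$ independent of $u$. Then $g_0\circ\nu=g_0$ on an open set of values of $p_1$, hence everywhere by analyticity, and $\nu$ pulls back to a nontrivial element of ${\rm Deck}(f)$. Your sketch contains none of this cross-ratio mechanism.
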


\begin{proof}
Suppose $\sigma_f$ is constant. In terms of its Hurwitz correspondence (as discussed in \S2.1), this means that in Diagram \eqref{correspondence}
the map $\rho_A$ from $\mathcal{W}_f=\omega_f\left(\mathcal{T}\left(S^2,B\right)\right)$ to the moduli space $\mathcal{M}_{0,A}$ is constant. 
Recall that the Hurwitz space $\mathcal{W}_f$ is the space of orbits of marked rational maps equivalent to $f$ under the pre and postcomposition action of Möbius transformations \cite[\S2]{MR3107522}. 
In other words, $\mathcal{W}_f$ is the quotient of the space of triples 
$(g,i,j)$ formed by a rational map $g:\hat{\Bbb{C}}\rightarrow\hat{\Bbb{C}}$ of degree $d:=\deg f$ and injections $i:A\hookrightarrow\hat{\Bbb{C}}$, $j:B\hookrightarrow\hat{\Bbb{C}}$  modulo the action 
\begin{equation}\label{action}
(\alpha,\beta)\cdot(g,i,j):=(\beta\circ g\circ\alpha^{-1},\alpha\circ i,\beta\circ j)
\end{equation}
of ${\rm{PSL}}_2(\Bbb{C})\times{\rm{PSL}}_2(\Bbb{C})$
where $g$ fits in a diagram such as 
\begin{equation}\label{diagram'}
\xymatrixcolsep{4pc}\xymatrix{\left(S^2,A\right)\ar[r]^{\phi'}\ar[d]_f &\left(\hat{\Bbb{C}},\phi'(A)\right)\ar[d]^g\\ 
\left(S^2,B\right)\ar[r]_\phi &\left(\hat{\Bbb{C}},\phi(B)\right).}
\end{equation}
Then the point $\left[\left(g,i:=\phi'\restriction_A,j:=\phi\restriction_B\right)\right]$ of $\mathcal{W}_f$  is the image of the Teichmüller point $[\phi]\in\mathcal{T}\left(S^2,B\right)$ under $\omega_f$, and $\rho_A$ sends $[(g,i,j)]$ to the moduli point $[i:A\hookrightarrow\hat{\Bbb{C}}]\in\mathcal{M}_{0,A}$.
Notice that 
$j\circ f\restriction_A=g\circ i$
due to the commutative diagram. 
Denoting the critical points of $f$ by $b_1,b_2,b_3\in B$, 
one can forgo the postcomposition action by normalizing $j=\phi\restriction_B$ so that 
\begin{equation}\label{normalization}
j(b_1)=0, \quad j(b_2)=1, \quad j(b_3)=\infty.
\end{equation}
In that case, the critical values of $g$ from Diagram \eqref{diagram'} are always $0,1, \infty$.
Up to precomposition with Möbius transformations, there are only finitely many possibilities for a rational map of degree $d$ which is ramified only above $0,1, \infty$ because there are only finitely many choices for its monodromy representation.
Therefore, the map $[(g,i,j)]\mapsto [g]$ that sends an element of $\mathcal{W}_f$ with a  representative $(g,i,j)$ in which $j$ is normalized as \eqref{normalization}  to the class of $g$ modulo precomposition takes values in a finite set. Thus, by arbitrarily picking a triple $(g_0,i_0,j_0)$ coming from a commutative diagram such as  \eqref{diagram'} with $j_0$ normalized,  for any other normalized representative $(g,i,j)$ of a point of $\mathcal{W}_f$, the rational map $g$ can be written as $g_0\circ\alpha$ for a suitable Möbius transformation $\alpha$. 
Notice that $(g=g_0\circ\alpha,i,j)$ and $(g_0,\alpha\circ i,j)$ are in the same orbit of action \eqref{action}.
Consequently, every orbit has a representative of the form $(g_0,i,j)$ with $j$ satisfying  \eqref{normalization}. 
This amounts to identifying $\mathcal{W}_f$ with the quotient of the connected component around $(i_0,j_0)$ of 
\begin{equation}\label{normalized}
\left\{(i,j)\mid i:A\hookrightarrow\hat{\Bbb{C}} \text{ and } j:B\hookrightarrow\hat{\Bbb{C}} \text{ satisfy }
j\circ f\restriction_A=g_0\circ i, \text{ and }  
\left(j(b_1), j(b_2), j(b_3)\right)=(0,1,\infty)
\right\}
\end{equation}
by an action of ${\rm{Deck}}(g_0)$ given by $\nu\cdot(i,j):=(\nu\circ i, j)$. We should now investigate the constancy of 
$\rho_A:\mathcal{W}_f\rightarrow\mathcal{M}_{0,A}$. It means that for different pairs $(i,j)$ appearing in \eqref{normalized} the moduli points $[i]\in\mathcal{M}_{0,A}$ are the same. 
As for the projection to $\mathcal{M}_{0,B}$, it is known that $\rho_B:\mathcal{W}_f\rightarrow\mathcal{M}_{0,B}$ is a finite covering map \cite[Theorem 2.6]{MR3107522}. The surjectivity of $\rho_B$ implies that all possible normalized injections $j:B\hookrightarrow\hat{\Bbb{C}}$ occur in the connected component of \eqref{normalized} which contains $(i_0,j_0)$. 
\\
\indent
We now prove the first assertion. 
Notice that for any $a\in A\cap f^{-1}({\rm{V}}_f)=A\cap f^{-1}(\{b_1,b_2,b_3\})$, 
given any $(i,j)$ from \eqref{normalized}, 
the point $i(a)$ lies in the finite set  $g_0^{-1}(\{0,1,\infty\})$ --
which is independent of $i$.
So in the connected component under consideration, $i$ and $i_0$ coincide on the subset $A\cap f^{-1}({\rm{V}}_f)$.  But then if $|A\cap f^{-1}({\rm{V}}_f)|\geq 3$, they must coincide as injections since their classes in $\mathcal{M}_{0,A}$ are the same. This is impossible because there exists a point $a\in A-f^{-1}({\rm{V}}_f)$, and if $i(a)=i_0(a)$, then $j(f(a))=g_0(i_0(a))$. Hence in \eqref{normalized}, normalized injections $j$ that take $f(a)\in B-\{b_1,b_2,b_3\}$ to a point of $\hat{\Bbb{C}}-\{0,1,\infty,g_0(i_0(a))\}$ cannot occur contradicting the surjectivity of 
$\rho_B:\mathcal{W}_f\rightarrow\mathcal{M}_{0,B}$.  \\
\indent
Next, suppose the cardinality of $A\cap f^{-1}({\rm{V}}_f)$ is two. Denote its elements by $a_1,a_2$. As mentioned above, 
$i(a_1)=i_0(a_1)$ and $i(a_2)=i_0(a_2)$. Due to the hypothesis $f(A)\nsubseteq{\rm{V}}_f$, there is a regular value 
$b_4\in B$ whose fiber intersects $A$. The intersection must have at least two points since otherwise $\sigma_f$ cannot be constant; cf. Corollary \ref{ell1 corollary}. Let $a_3$ and $a_4$ be two elements of $A\cap f^{-1}(b_4)$. Then $i_0(a_3)$ and $i_0(a_4)$ lie in the fiber of $g_0$ above $j_0(b_4)$. By the Implicit Function Theorem, if $U$ is a sufficiently small neighborhood of $j_0(b_4)$ in $\hat{\Bbb{C}}$, $i_0(a_3)$ and $i_0(a_4)$ can be continued analytically to points $p_1(u)\neq p_2(u)$ that belong to 
$g_0^{-1}(u)$ for any $u\in U$. We can now continuously perturb $i_0:A\hookrightarrow\hat{\Bbb{C}}$ and 
$j_0:B\hookrightarrow\hat{\Bbb{C}}$ to two other injections
$i:A\hookrightarrow\hat{\Bbb{C}}$ and $j:B\hookrightarrow\hat{\Bbb{C}}$
to obtain another element of $\mathcal{W}_f$: change $j(b_4)$ to a point $u\in U$ near $j_0(b_4)$, and change 
$i_0(a_3)$ and $i_0(a_4)$ to elements $p_1(u)$ and $p_2(u)$ that belong to $g_0^{-1}(u)$; $i$ and $j$ agree with respectively $i_0$ and $j_0$ elsewhere. The conditions desired in \eqref{normalized} are clearly satisfied and thus $(i,j)$ determines a point of the Hurwitz space $\mathcal{W}_f$. But then, due to the constancy of 
$\rho_A:\mathcal{W}_f\rightarrow\mathcal{M}_{0,A}$,  the injections $i_0$ and $i$ differ by a Möbius transformation. In particular, in terms of cross-ratios:
$$T\left(i_0(a_1),i_0(a_2),i_0(a_3),i_0(a_4)\right)
=T\left(i(a_1),i(a_2),i(a_3),i(a_4)\right)=T\left(i_0(a_1),i_0(a_2),p_1(u),p_2(u)\right).$$
There exists a Möbius transformation $\nu\neq{\rm{id}}$ dependent only on $i_0(a_1),i_0(a_2),i_0(a_3),i_0(a_4)\in\hat{\Bbb{C}}$ for which
$$T\left(i_0(a_1),i_0(a_2),i_0(a_3),i_0(a_4)\right)=T\left(i_0(a_1),i_0(a_2),x,y\right)\Leftrightarrow y=\nu(x).$$
Therefore, for any $u$ from the nonempty open set $U$, the distinct points $p_1(u)$ and $p_2(u)$ of $g_0^{-1}(u)$
are related through $p_2(u)=\nu(p_1(u))$. But $g_0$ is analytic, so we should have $g_0=g_0\circ\nu$, i.e. $\nu$ is a nontrivial deck transformation of $g_0:\hat{\Bbb{C}}\rightarrow\hat{\Bbb{C}}$. This can be pulled back to a nontrivial deck transformation  of $f:S^2\rightarrow S^2$ via the commutative diagram.\\
\indent
Finally, we shall prove the third assertion.
There is no loss of generality in assuming that elements $b_1,b_2,b_3$ of $B\subset\hat{\Bbb{C}}$ are $0,1,\infty$ respectively. 
Considering $f$ to be a holomorphic branched cover 
$(\hat{\Bbb{C}},A)\rightarrow (\hat{\Bbb{C}},B)$, in the commutative diagram \eqref{diagram'}, one can take $\phi$ and $\phi'$ to be identity and the right column to be $f$.  
Now consider the distinguished triple 
$$
\left(g_0:=f,i_0:=A\stackrel{inc}{\hookrightarrow}\hat{\Bbb{C}},j_0:=B\stackrel{inc}{\hookrightarrow}\hat{\Bbb{C}}\right)
$$ 
and the previous description of $\mathcal{W}_f$ as the connected component of \eqref{normalized} around $(i_0,j_0)$.
It suffices to prove the last assertion for injections $j:B\hookrightarrow\hat{\Bbb{C}}$
that are normalized in the sense of \eqref{normalization}. The moduli point $[j]\in\mathcal{M}_{0,B}$ can be lifted 
to an element of $\mathcal{W}_f$ via $\rho_B$.  Representing this element by a pair $(i,j)$ as in \eqref{normalized}, one has 
$[i]=[i_0]$ in $\mathcal{M}_{0,A}$ by  invoking the constancy of $\rho_A$ again. Thus $i:A\hookrightarrow\hat{\Bbb{C}}$
should be the composition of the inclusion map with a Möbius map $\nu$. 
The identity $j\circ f\restriction_A=g_0\circ i$ now amounts to $j\circ f\restriction_A=f\circ\nu\restriction_A$. For any arbitrary $b\in B$, taking the preimage of $j(b)$ by both sides implies that
$A\cap f^{-1}(b)$
is included in $\nu^{-1}\left(f^{-1}(j(b))\right)$; this finishes the proof.
\end{proof}

\begin{example}\label{Arturo}
A  remarkable example of a rational map with constant pullback which is not a composition of maps of lower degree appears in the thesis of  A. Saenz  \cite[Appendix D]{MRArturo}.
That example is 
\begin{equation}\label{Arturo example}
f(z):=-\sqrt[3]{2}\,\frac{z(z^3+2)}{2z^3+1};
\end{equation}
the critical points  are the third roots of unity $1,\omega,\omega^2\,(\omega:={\rm{e}}^{\frac{2\pi{\rm{i}}}{3}})$
at any of which the local degree of $f$ is three; they are respectively mapped to 
$-\sqrt[3]{2},-\sqrt[3]{2}\,\omega,-\sqrt[3]{2}\,\omega^2$
via $f$. All these critical values belong to the fiber above the fixed point $z=0$, hence the map is PCF with 
${\rm{P}}_f=\{-\sqrt[3]{2},-\sqrt[3]{2}\,\omega,-\sqrt[3]{2}\,\omega^2,0\}$.
The proof of constancy of $\sigma_f$ in \cite{MRArturo} is geometric: it is based on a  \textit{core arc} argument
that shows that connected components of the preimage of a simple closed curve in $\hat{\Bbb{C}}-{\rm{P}}_f$  are peripheral (which includes the case of the preimages being nullhomotopic too). This implies that $\sigma_f$ is constant due to \cite[Theorem 5.1]{MR3456156}.
A similar argument will later appear in the proof of Proposition \ref{generalization?}.\\
\indent According to Proposition \ref{Belyi}, the regular fibers of the map \eqref{Arturo example} should be conformally the same. This can be also checked directly: $f$ is obtained from composing 
$$
g(z):=-\frac{z(z^3+2)}{2z^3+1}
$$
with the Möbius transformation $z\mapsto\sqrt[3]{2}\,z$. By putting the critical values $-1,-\omega,-\omega^2$
of $g$ at $0,1,\infty$ via a change of coordinates, one obtains the Belyi map 
\begin{equation}\label{Kamalinezhad}
h(z):=-\frac{z(z-2)^3}{(2z-1)^3}
\end{equation} 
whose dessin is demonstrated in Figure \ref{fig:dessin}.
The goal is to verify that the regular fibers of $h$ are isomorphic as subsets of size four of $\hat{\Bbb{C}}$. 
This Belyi map is studied in \cite{MR3431861} for a process called the \textit{flat refinement} of dessins d'enfants. Algebraically, the process amounts to postcomposing a Belyi map with $h$; the result is Belyi again since $\{0,1,\infty\}$ is preserved by $h$ (i.e. $h$ is \textit{Belyi-extending} in the sense of \cite{MR2266994}). Another important observation is that $h$ is \textit{rigid Lattès}: it is induced by a multiplication map on the elliptic curve  $y^2=x^3+1$.
This is all summarized in the commutative diagram below: 
\begin{equation}\label{conjugacies}
\xymatrixcolsep{9pc}\xymatrix{\left\{y^2=x^3+1\right\}\ar[d]_{(x,y)\mapsto\frac{y+1}{2}}\ar[r]^{[-2]} & \left\{y^2=x^3+1\right\} \ar[d]^{(x,y)\mapsto\frac{y+1}{2}}\\
\hat{\Bbb{C}}\ar[r]^{h:z\mapsto-\frac{z(z-2)^3}{(2z-1)^3}} \ar[d]_{z\mapsto\frac{-\omega z-1}{\omega^2 z+1}}& \hat{\Bbb{C}} \ar[d]^{z\mapsto\frac{-\omega z-1}{\omega^2 z+1}}\\
\hat{\Bbb{C}}  \ar[r]^{g:z\mapsto-\frac{z(z^3+2)}{2z^3+1}}& \hat{\Bbb{C}}.}
\end{equation}    
Here, the multiplication map $[-2]$ sends a point $(x,y)$ of the elliptic curve to 
$$
(x,-y)\oplus(x,-y)=\left(\frac{x(x^3-8)}{4(x^3+1)}, -\frac{y^4+18y^2-27}{8y^3}\right).
$$
The commutative diagram above indicates that a generic fiber of 
$h(z)=-\frac{z(z-2)^3}{(2z-1)^3}$
is in the form of 
$$
\left\{\frac{b'+1}{2}\,\Big|\, (a',b')=(a,b)\oplus\text{a 2-torsion point}\right\}.
$$
The non-identity $2$-torsion points of $y^2=x^3+1$ are in the form of $(-\lambda,0)$ where $\lambda\in\{1,\omega,\omega^2\}$ is a $3^{\rm{rd}}$ root of unity. We have
$$
(a,b)\oplus(-\lambda,0)=\left(\left(\frac{b}{a+\lambda}\right)^2-a+\lambda,-\frac{3\lambda^2b}{(a+\lambda)^2}\right);
$$
so the $y$-coordinates of these four points are 
$$
b,-\frac{3b}{(a+1)^2},-\frac{3\,\omega^2\,b}{(a+\omega)^2},-\frac{3\,\omega\,b}{(a+\omega^2)^2}. 
$$
In computing their cross-ratio, the terms miraculously simplify and the cross-ratio turns out to be independent of $(a,b)$:
$$
T\left(b,-\frac{3b}{(a+1)^2},-\frac{3\,\omega^2\,b}{(a+\omega)^2},-\frac{3\,\omega\,b}{(a+\omega^2)^2}\right)
=\frac{1}{1+\omega}.
$$
The details of this computation are left to the reader. For an analytic explanation of why the regular fibers of rational maps such as \eqref{Arturo example} or \eqref{Kamalinezhad} are the same, see \cite[\S 11.3.2]{MR4472056}.
\end{example}

\begin{figure}[ht!]
\includegraphics[width=9cm]{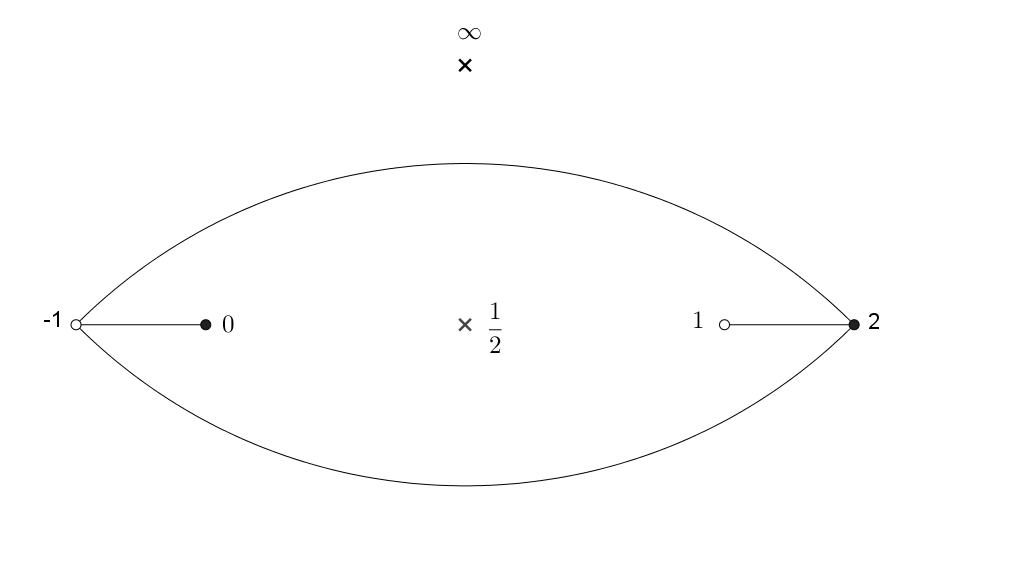}
\caption{The dessin of the Belyi function $h(z)=-\frac{z(z-2)^3}{(2z-1)^3}$ from Example \ref{Arturo example}. 
The vertices of the dessin marked by $\bullet$, $\circ$ and $\times$ are elements of $f^{-1}(0)$, $f^{-1}(1)$ and $f^{-1}(\infty)$ respectively.}
\label{fig:dessin}
\end{figure}

\begin{figure}[ht!]
\includegraphics[width=13cm]{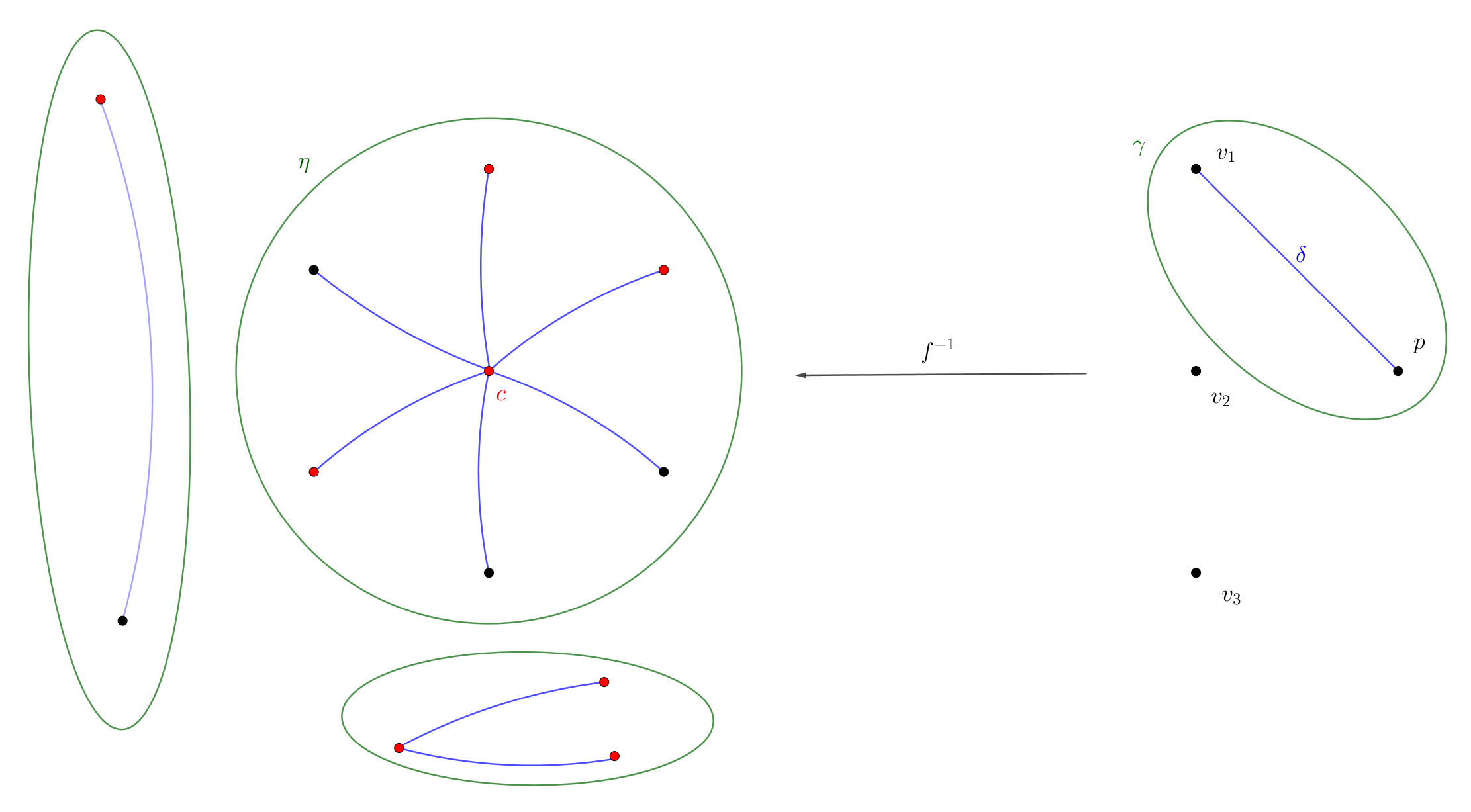}
\caption{A schematic picture illustrating the constancy of $\sigma_f$ in the proof of Proposition \ref{generalization?}.
The black points are the elements of the postcritical set ${\rm{P}}_f=\{v_1,v_2,v_3,p\}$ while the red points are not postcritical.  The simple closed curve $\gamma$ (in green) surrounds the fixed point $p$ and one of the critical values, say $v_1$; and $\delta$ (in blue) is an arc inside $\gamma$ connecting $v_1$ to $p$. Each connected component of 
$f^{-1}(\gamma)$ encloses a connected component of $f^{-1}(\delta)$. Each of the latter components is an arc or a ``star'' centered at a critical point above $v_1$ with its ends belonging to $f^{-1}(p)$. Due to the assumption on monodromy, there is a star component of $f^{-1}(\delta)$ (centered at the critical point $c$ in the picture) which is enclosed by a component of $\eta$ of $f^{-1}(\gamma)$ and has three of the postcritical points at its ends. There is only one other postcritical point, thus all components of $f^{-1}(\gamma)$ are peripheral  in  $\hat{\Bbb{C}}-{\rm{P}}_f$.
} 
\label{fig:tripod}
\end{figure}

\begin{remark}
The map \eqref{Kamalinezhad} is simultaneously Belyi and Lattès, and is induced by an endomorphism of the elliptic curve 
$y^2=x^3+1$. Indeed, if a Lattès map $f$ induced by a self-morphism (an affine transformation) of an elliptic curve $E$ is Belyi, then $E$ must have nontrivial automorphisms and hence is isomorphic to either $y^2=x^3+1$ or $y^2=x^3+x$. 
This is because $f:\hat{\Bbb{C}}\rightarrow\hat{\Bbb{C}}$ is semiconjugate to a morphism $E\rightarrow E$ via the action of a cyclic group $\Gamma$ of rotations around a base point for which $E\big/\Gamma\cong\hat{\Bbb{C}}$
\cite[Lemma 3.5]{MR2348953}. The number of critical values of $f$ is less than four only if $|\Gamma|>2$; hence $E$ admits an automorphism of order larger than two.  
\end{remark}

The final proposition of this section tries to mimic the treatment above of the example from  \cite[Appendix D]{MRArturo} 
in order to construct new examples of rational maps with constant pullback. 
\begin{proposition}\label{generalization?}
Let $h:\hat{\Bbb{C}}\rightarrow\hat{\Bbb{C}}$ be a Belyi map of degree $d$, $z_0$ be a regular value and 
$\rho:\pi_1\left(\hat{\Bbb{C}}-\{0,1,\infty\}\right)\rightarrow{\rm{Sym}}(h^{-1}(z_0))$ be the monodromy representation.
Denote the images of generators of  the fundamental group (determined by small positive loops around $0,1,\infty$) by $\theta_0,\theta_1,\theta_\infty$ (they satisfy $\theta_0\circ\theta_1\circ\theta_\infty={\rm{id}}$). 
Suppose there exist points $p,v_1,v_2,v_3$ of the fiber $h^{-1}(z_0)$ such that
\begin{itemize}
\item for any generator $\theta\in\{\theta_0,\theta_1,\theta_\infty\}$, at least three of $p,v_1,v_2,v_3$ are in the same cycle of the permutation $\rho(\theta)$;  
\item the cross-ratio of $p,v_1,v_2,v_3$ (in some ordering) is equal to $z_0$.
\end{itemize}
Then there exists a Möbius transformation $\alpha$ with the property that $f:=\alpha\circ h$ is a  PCF map with 
$|{\rm{P}}_f|=4$ whose pullback is constant. 
\end{proposition}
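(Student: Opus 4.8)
The plan is to produce the Möbius map $\alpha$ from the cross-ratio condition, read off the postcritical combinatorics of $f=\alpha\circ h$, and then establish constancy of $\sigma_f$ via the topological criterion \cite[Theorem 5.1]{MR3456156}, using a ``core arc'' (preimage-of-a-tripod) argument in which the monodromy hypothesis is exactly what is needed — a variant of the core arc reasoning mentioned in Example \ref{Arturo}.

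First I would fix $\alpha$. Reorder $v_1,v_2,v_3,p$ so that their cross-ratio, computed in the ordering corresponding to $(0,1,\infty,z_0)$, equals $z_0$; this is possible by the second bullet of the hypothesis. Let $\alpha\in{\rm{PSL}}_2(\Bbb{C})$ be the Möbius transformation with $\alpha(0)=v_1$, $\alpha(1)=v_2$, $\alpha(\infty)=v_3$; since Möbius maps preserve cross-ratios, the matching of the two cross-ratios forces $\alpha(z_0)=p$. Set $f:=\alpha\circ h$. As $\alpha$ is biholomorphic, ${\rm{C}}_f={\rm{C}}_h$ and ${\rm{V}}_f=\alpha({\rm{V}}_h)\subseteq\{v_1,v_2,v_3\}$; moreover the monodromy hypothesis forces each of $\sigma_0,\sigma_1,\sigma_\infty$ to have a cycle of length $\geq 3$, hence to be nontrivial, so $0,1,\infty$ are genuine critical values of $h$ and ${\rm{V}}_f=\{v_1,v_2,v_3\}$. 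Because $v_1,v_2,v_3,p$ all lie in $h^{-1}(z_0)$, applying $f$ to any of them gives $\alpha(h(\cdot))=\alpha(z_0)=p$; thus $p$ is a fixed point of $f$, $f(\{v_1,v_2,v_3\})=\{p\}$, and ${\rm{P}}_f=\bigcup_{n\geq 0}f^{\circ n}({\rm{V}}_f)=\{v_1,v_2,v_3,p\}$, a set of exactly four (pairwise distinct) points. So $f$ is PCF with $|{\rm{P}}_f|=4$. Note also that $p$ is a \emph{regular} value of $f$, since $f^{-1}(p)=h^{-1}(z_0)$ consists of $d$ distinct points at which $h$, hence $f$, is locally injective.

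For constancy I would invoke \cite[Theorem 5.1]{MR3456156}: it suffices that every component of $f^{-1}(\gamma)$ be nullhomotopic or peripheral for each simple closed curve $\gamma\subset\hat{\Bbb{C}}-{\rm{P}}_f$, and only essential $\gamma$ need attention. Such a $\gamma$ separates the four postcritical points into two pairs, one of which contains $p$; say it is $\{p,v_i\}$. Take a core arc $\delta$ for $\gamma$: an embedded arc joining $p$ to $v_i$, lying in the disk cut off by $\gamma$ that contains $\{p,v_i\}$, and meeting ${\rm{P}}_f$ only at its endpoints, so that $\gamma$ is isotopic in $\hat{\Bbb{C}}-{\rm{P}}_f$ to the boundary of a regular neighbourhood of $\delta$ and the components of $f^{-1}(\gamma)$ are, up to isotopy, the boundaries of regular neighbourhoods of the components of $f^{-1}(\delta)$. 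Since $\delta$ runs from the regular value $p$ to the critical value $v_i$ and avoids the other two critical values in its interior, $f^{-1}(\delta)$ is a bipartite graph in which every vertex over $p$ has valence one and all higher-valence vertices lie over $v_i$; it therefore has no cycles and is a disjoint union of trees — single arcs and ``stars'' centred at critical points of $f$ over $v_i$ — whose leaves over $p$ are points of $f^{-1}(p)=h^{-1}(z_0)$. As $f({\rm{P}}_f)=\{p\}\subset\delta$, all four postcritical points occur among these leaves. For a component $T$, the boundary of its neighbourhood is nullhomotopic when $|T\cap{\rm{P}}_f|\in\{0,4\}$, peripheral when $|T\cap{\rm{P}}_f|\in\{1,3\}$, and essential when $|T\cap{\rm{P}}_f|=2$; hence the crux is to show that \emph{no component of $f^{-1}(\delta)$ carries exactly two postcritical points}.

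Pulling back through $\alpha$, write $\tilde\delta:=\alpha^{-1}(\delta)$, an arc from $z_0$ to $\alpha^{-1}(v_i)\in\{0,1,\infty\}$; then $f^{-1}(\delta)=h^{-1}(\tilde\delta)$, and the components of $h^{-1}(\tilde\delta)$ induce a partition of $h^{-1}(z_0)$ whose blocks correspond to the points of $h^{-1}(\alpha^{-1}(v_i))$, with blocks of size $>1$ at the critical points. When $\tilde\delta$ is the standard tentacle defining the monodromy generator $\sigma\in\{\sigma_0,\sigma_1,\sigma_\infty\}$ around $\alpha^{-1}(v_i)$, this is exactly the cycle partition of $\sigma$ on $h^{-1}(z_0)$; by the monodromy hypothesis one cycle of $\sigma$ contains at least three of $v_1,v_2,v_3,p$, so the corresponding star carries $\geq 3$ postcritical points and every other component carries $\leq 1$ — in particular none carries exactly two. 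This settles the essential $\gamma$ whose core arc is the standard tentacle, in each of the three separation types, using $\sigma_0,\sigma_1,\sigma_\infty$ respectively. For a general essential $\gamma$ one must repeat the argument after re-routing $\delta$; re-routing replaces the cycle partition of $\sigma$ by its image under an element of the monodromy group assembled from loops around the other two critical values, and imposing the hypothesis simultaneously for \emph{all three} generators is precisely what keeps a block of size $\geq 3$ present after every such re-routing. Securing this uniform control over the infinitely many isotopy classes of essential curves — rather than only the three ``tripod'' curves — is the step I expect to be the main obstacle; it is transparent when $\deg h=4$ (then $h^{-1}(z_0)={\rm{P}}_f$ and the monodromy group permutes ${\rm{P}}_f$, which is the situation of \cite[Appendix D]{MRArturo}) and in general requires the bookkeeping just indicated. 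Granting it, every component of every $f^{-1}(\gamma)$ is nullhomotopic or peripheral, so $\sigma_f$ is constant.
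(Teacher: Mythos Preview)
Your approach is the paper's: pick $\alpha$ from the cross-ratio condition so that $\alpha(z_0)=p$ and $\alpha(\{0,1,\infty\})=\{v_1,v_2,v_3\}$, read off ${\rm{P}}_f=\{p,v_1,v_2,v_3\}$, and then run the core-arc argument with \cite[Theorem~5.1]{MR3456156}. The paper carries this out in exactly the same way --- it chooses an arc $\delta$ from $v_i$ to $p$ inside $\gamma$, decomposes $f^{-1}(\delta)$ into stars whose leaves lie in $f^{-1}(p)=h^{-1}(z_0)$, and asserts that ``due to the assumption on monodromy'' one star carries at least three of the four postcritical points, forcing every component of $f^{-1}(\gamma)$ to be peripheral or nullhomotopic.

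Where you diverge from the paper is in being \emph{more} careful, not less. The paper does not distinguish the ``standard tentacle'' arc from a re-routed one: it makes the star-with-three-leaves assertion for an arbitrary $\delta$ inside an arbitrary essential $\gamma$, without commenting on the fact that different isotopy classes of $\gamma$ (same separation type) give arcs $\delta$ whose induced partition of $h^{-1}(z_0)$ is the cycle partition of a \emph{conjugate} of the relevant generator rather than of the generator itself. You correctly isolate this as the crux, note that it is transparent when $\deg h=4$ (then $h^{-1}(z_0)={\rm{P}}_f$ and every conjugate of a $3$-cycle on four letters is again a $3$-cycle), and gesture at the bookkeeping needed in general. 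That bookkeeping is not spelled out in your proposal --- and it is not spelled out in the paper's proof either. So on this point your write-up is at least as complete as the published argument; the step you call ``the main obstacle'' is one the paper simply does not address.
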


\begin{proof}
Due to the assumption on cross-ratios, after relabeling $p,v_1,v_2,v_3$ if necessary, 
there exists a Möbius transformation $\alpha$ with
$$
\alpha(z_0)=p,\quad\alpha(0)=v_1,\quad\alpha(\infty)=v_2,\quad\alpha(1)=v_3.
$$
(Notice that $T(z_0,0,\infty,1)=z_0$.)
The critical values of $\alpha\circ h$  are $v_1=\alpha(0),v_2=\alpha(\infty),v_3=\alpha(1)$. Since 
$p,v_1,v_2,v_3\in h^{-1}(z_0)$, 
the critical values $v_1,v_2,v_3$ of $f=\alpha\circ h$ are mapped $p=\alpha(z_0)$ which is a fixed point.
We see that $f$ is PCF with ${\rm{P}}_f=\{p,v_1,v_2,v_3\}$. 
Invoking \cite[Theorem 5.1]{MR3456156}, to prove the constancy of $\sigma_f$, it suffices to show that for any simple closed curve  $\gamma$ that encloses precisely two points of ${\rm{P}}_f$ the connected components of $f^{-1}(\gamma)$
are peripheral in $\hat{\Bbb{C}}-{\rm{P}}_f$.
The claim follows from the  argument presented in \cite[Appendix D]{MRArturo}: without any loss of generality, we may assume that $p,v_1$ are inside $\gamma$ and the other two postcritical points $v_2,v_3$ are outside it.  Consider an arc $\delta$ connecting $v_1$ to $p$ in the interior of $\gamma$. Its preimage $f^{-1}(\delta)$ 
consists of certain ``stars'' centered at critical points in 
$f^{-1}(v_1)$ with their ends being at points of $f^{-1}(p)$, or certain arcs that connect a regular point in $f^{-1}(v_1)$ to a point from $f^{-1}(p)$. Each of them is surrounded by a connected component of $f^{-1}(\gamma)$; see Figure \ref{fig:tripod}. 
That connected component is peripheral in $\hat{\Bbb{C}}-{\rm{P}}_f$ unless the component of 
$f^{-1}(\delta)$ inside it has exactly two points from ${\rm{P}}_f$ at its ends. 
But due to the assumption on monodromy, there is a critical point $c$ above $v_1$ whose corresponding connected component of $f^{-1}(\delta)$ has three members of ${\rm{P}}_f=\{p,v_1,v_2,v_3\}$ at its ends. So the connected component of $f^{-1}(\gamma)$ enclosing it, denoted by $\eta$ in Figure \ref{fig:tripod}, is peripheral. As for the rest of connected components of $f^{-1}(\gamma)$, the number of points of ${\rm{P}}_f$ inside them is zero or one, so they are peripheral.
\end{proof}

\begin{remark}
It is worthwhile to point out the power of the geometric argument presented in \cite[Appendix D]{MRArturo}. 
As in the proof above, that argument shows that the pullback is constant. But then, due to Proposition \ref{Belyi}, the fibers of the rational map are in a sense conformally related. Checking this directly can be tedious as observed in Example \ref{Arturo} where we algebraically verified that the fibers of \eqref{Arturo example} are conformally equivalent. 
\end{remark}

\begin{remark}
The hypothesis of Proposition \ref{Belyi} on the monodromy holds for the Belyi map \eqref{Kamalinezhad}. 
In a numbering of the edges of its dessin from Figure \ref{fig:dessin}, the permutation representation is  
$$
\theta_0=\left(1\quad 2\quad 3\right),\quad\quad\theta_1=\left(1\quad 3\quad 4\right)\quad\quad 
\theta_\infty=\left(2\quad 4\quad 3\right);
$$
observe that each of the permutations has three of $1,2,3,4$ in a cycle.  
\end{remark}

\subsection{Bicritical Thurston maps}
\begin{table}[hbt!]
\begin{tabular}{|p{8cm}|}
\hline
\vspace{1mm}\hfil
\xymatrixcolsep{3pc}\xymatrix
{\underset{v_1}{\bullet}\ar[r] & \bullet\ar@(dr,ur) & \underset{v_2}{\bullet}\ar[r] & \bullet\ar@(dr,ur)}\\
\hline
\vspace{1mm}\hfil
\xymatrixcolsep{3pc}\xymatrix
{\underset{v_1}{\bullet}\ar[r] & \bullet \ar@/^/[r] &\bullet \ar@/^/[l]& \ar[l]\underset{v_2}{\bullet}}\vspace{1mm}
\\
\hline
\end{tabular}
\vspace{2mm}
\caption{Possible functional graphs for the postcritical dynamics of a bicritical map with constant pullback; see the proof of Proposition \ref{bicritical}.}
\label{Tab:bicritical}
\end{table}
After discussing Thurston maps with three critical values in the previous subsection, we now focus on Thurston maps 
$f:S^2\rightarrow S^2$ with exactly two critical values. A simple application of Riemann-Hurwitz implies that the number of critical points should be two as well, i.e. $f$ is bicritical. 
\begin{proposition}\label{bicritical}
For bicritical Thurston maps with at least four postcritical points, the pullback is never constant. 
\end{proposition}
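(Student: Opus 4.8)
The plan is to use the rank results of \S3 to force the postcritical dynamics of a bicritical $f$ into a very rigid shape, and then to contradict constancy of $\sigma_f$ through the Hurwitz correspondence \eqref{correspondence}.

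First I would record that a degree-$d$ branched cover of $S^2$ with only the two critical points $c_1,c_2$ must, by Riemann--Hurwitz, satisfy ${\rm{mult}}_{c_1}(f)={\rm{mult}}_{c_2}(f)=d$; so each $c_i$ is totally ramified, $f^{-1}(f(c_i))=\{c_i\}$, the critical values $v_1:=f(c_1)$ and $v_2:=f(c_2)$ are distinct, and $|{\rm{V}}_f|=2$. Assuming $\sigma_f$ is constant, ${\rm{D}}\sigma_f$ vanishes identically, so Corollary \ref{main corollary} gives $|{\rm{P}}_f|\le\min\{2|{\rm{V}}_f|,d+2\}=4$, hence $|{\rm{P}}_f|=4$, together with $|f^{-1}(p)\cap{\rm{P}}_f|\ge 2$ for every $p\in{\rm{P}}_f-{\rm{V}}_f$. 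Writing ${\rm{P}}_f=\{v_1,v_2\}\sqcup\{a,b\}$: the four indegrees of the functional graph of $f\restriction_{{\rm{P}}_f}$ sum to $4$, and those of $a,b$ are $\ge 2$, so they equal $2$ and $v_1,v_2$ have indegree $0$; hence $f({\rm{P}}_f)\subseteq\{a,b\}$ and ${\rm{P}}_f$ splits into two pairs $\{p_1,p_2\}:=f^{-1}(a)\cap{\rm{P}}_f$ and $\{q_1,q_2\}:=f^{-1}(b)\cap{\rm{P}}_f$. (When $d=2$ each $c_i$ is in fact simple and Corollary \ref{ell2 corollary} already applies; the point of what follows is to handle all $d$ uniformly.)

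Next I would exploit that a bicritical rational map is essentially unique. Any $g$ Hurwitz equivalent to $f$ is a degree-$d$ cover with exactly two totally ramified critical points, and a short zero--pole count shows that, after pre- and post-composition with Möbius transformations, $g(z)=z^d$. As in the proof of Proposition \ref{Belyi}, I would normalize a representative $(g,i,j)$ of a point of $\mathcal{W}_f$ so that $g=z^d$ and $j(v_1)=0$, $j(v_2)=\infty$ (the critical values of $g$), and then use the residual symmetry of \eqref{action} preserving these normalizations --- namely $\alpha(z)=\lambda z$, $\beta(w)=\lambda^d w$ --- to arrange $i(p_1)=1$. Commutativity of \eqref{diagram1} reads $j(f(p))=i(p)^d$ on ${\rm{P}}_f$; since $f(p_1)=f(p_2)=a$ and $f(q_1)=f(q_2)=b$ this forces $i(p_2)^d=i(p_1)^d$ and $i(q_2)^d=i(q_1)^d$, so $i(p_2)=\zeta$ and $i(q_2)=\zeta'\,i(q_1)$ for $d$-th roots of unity $\zeta,\zeta'$, necessarily nontrivial because $i$ is injective, and constant along the connected space $\mathcal{W}_f$. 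With $r:=i(q_1)$ the marking becomes $i\colon(p_1,p_2,q_1,q_2)\mapsto(1,\zeta,r,\zeta' r)$ and $j\colon(v_1,v_2,a,b)\mapsto(0,\infty,1,r^d)$, so $r$ is a coordinate on $\mathcal{W}_f$, and it is non-constant because $\rho_B\colon\mathcal{W}_f\to\mathcal{M}_{0,{\rm{P}}_f}$ is surjective onto the one-dimensional moduli space while the moduli class of the $j$-marking depends only on $r^d$.

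To finish, constancy of $\sigma_f$ would force $\rho_A\colon\mathcal{W}_f\to\mathcal{M}_{0,{\rm{P}}_f}$ to be constant. But $\rho_A$ sends the point with parameter $r$ to the moduli class of the $i$-marking, i.e. (for a fixed ordering of ${\rm{P}}_f$) to the cross-ratio $T(1,\zeta,r,\zeta' r)=\frac{(1-\zeta)(1-\zeta')\,r}{(1-r)(\zeta-\zeta' r)}$; as $\zeta,\zeta'$ are nontrivial $d$-th roots of unity, its numerator and denominator have different degrees in $r$, so it is a non-constant rational function of $r$ --- contradiction, and so $\sigma_f$ is not constant. I expect the genuinely delicate part to be the bookkeeping in the third paragraph: verifying that after the normalizations a point of $\mathcal{W}_f$ is faithfully recorded by the single scalar $r$, that the root-of-unity data $\zeta,\zeta'$ is locally (hence globally) constant, and that surjectivity of $\rho_B$ really does make $r$ range over a one-parameter family. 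Everything else, in particular the closing cross-ratio computation, is routine.
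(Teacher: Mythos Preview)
Your proposal is correct and follows the same overall strategy as the paper: use Corollary~\ref{main corollary} to force $|{\rm{P}}_f|=4$ and pin down the functional graph of $f\restriction_{{\rm{P}}_f}$, then analyze the Hurwitz correspondence \eqref{correspondence} for a bicritical normal form to see that $\rho_A$ cannot be constant.

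The implementation differs in two pleasant ways. First, the paper invokes Proposition~\ref{combinatorics} to list two explicit portraits (Table~\ref{Tab:bicritical}) and then treats each separately, whereas your indegree count yields the single structural statement ``${\rm{P}}_f$ is partitioned into two $f$-fibres of size two over $a$ and $b$'' and proceeds uniformly; this in fact also covers the portrait $v_1,v_2\mapsto a\mapsto b\circlearrowleft$, which satisfies all the hypotheses but is not among the two cases displayed in the paper. Second, the paper normalizes the rational map as $g(z)=\bigl(\tfrac{z+x}{z+y}\bigr)^d$ (placing the critical \emph{values} at $0,\infty$) and exhibits the correspondence as an explicit affine curve $C_{d,\lambda,\lambda'}$ on which $\rho_1$ is generically two-to-one; you instead normalize to $g(z)=z^d$ (placing the critical \emph{points} at $0,\infty$), record the point of $\mathcal{W}_f$ by the scalar $r=i(q_1)$, and finish with a single cross-ratio calculation. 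The ``delicate bookkeeping'' you flag --- that $i({\rm{P}}_f)\subset\Bbb{C}^*$ (since $c_1,c_2\notin{\rm{P}}_f$ once the $v_i$ have indegree~$0$), that the residual symmetry is exactly $\alpha(z)=\lambda z$, $\beta(w)=\lambda^d w$, and that $\zeta,\zeta'$ are locally constant on the connected $\mathcal{W}_f$ --- all checks out. Your route is slightly slicker; the paper's has the virtue of producing the correspondence curve explicitly.
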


\begin{proof}
Assume the contrary: $f$ is a Thurston map of degree $d$ with $|{\rm{P}}_f|\geq 4$ and exactly two critical points for which $\sigma_f$ is constant. 
By Corollary \ref{main corollary}, one should have $|{\rm{P}}_f|\leq 2|{\rm{V}}_f|=4$; thus the equality is achieved. 
Proposition \ref{combinatorics} then implies that each critical value is strictly preperiodic and after one iteration lands on a periodic cycle. There are two possibilities:
\begin{itemize}
\item the images of critical values are two distinct fixed points;
\item the images of critical values form a $2$-cycle;
\end{itemize}
see Table \ref{Tab:bicritical}. 
One may directly check that $\sigma_f$ is not constant in either of these cases by a geometric curve lifting argument or by invoking \cite{MRParry1}. Instead, we shall employ an algebraic approach which involves an explicit computation of 
the associated Hurwitz correspondence $\mathcal{R}_f:\mathcal{M}_{0,{\rm{P}}_f}\rightrightarrows\mathcal{M}_{0,{\rm{P}}_f}$ (cf. \S2.1) from the diagram below 
\begin{equation}\label{correspondence'}
\xymatrix{\mathcal{T}\left(S^2,{\rm{P}}_f\right)\ar[dd]_{\pi} \ar[rd]^{\omega_f}  \ar[rr]^{\sigma_f}& &\mathcal{T}\left(S^2,{\rm{P}}_f\right)\ar[dd]^{\pi}\\
&\mathcal{W}_f \ar[ld]^{\rho_2}\ar[rd]_{\rho_1}&\\
\mathcal{M}_{0,{\rm{P}}_f}  & &\mathcal{M}_{0,{\rm{P}}_f}.}
\end{equation}
Since ${\rm{P}}_f$ is of size four, the moduli space $\mathcal{M}_{0,{\rm{P}}_f}$ can be identified with $\Bbb{C}-\{0,1\}$, and the Teichm\"uller space $\mathcal{T}\left(S^2,{\rm{P}}_f\right)$ with its universal cover, the upper half-plane $\Bbb{H}$. Suppose $\sigma_f([\phi])=[\phi']$. We pick representatives 
$\phi:\left(S^2,{\rm{P}}_f\right)\rightarrow \left(\hat{\Bbb{C}},\phi\left({\rm{P}}_f\right)\right)$
and 
$\phi':\left(S^2,{\rm{P}}_f\right)\rightarrow \left(\hat{\Bbb{C}},\phi'\left({\rm{P}}_f\right)\right)$
that take $v_1,v_2,f(v_1)$ to respectively $\infty,0,1$. Then $t:=\phi(f(v_2))$ and $t':=\phi'(f(v_2))$ provide coordinates for the versions of  $\mathcal{M}_{0,{\rm{P}}_f}\cong\Bbb{C}-\{0,1\}$ that appear on the left and right column of \eqref{correspondence'} respectively. 
Points of $\mathcal{W}_f$ are determined by rational maps $g$ that are Hurwitz equivalent to $f$: 
\begin{equation}\label{diagram'''}
\xymatrixcolsep{4pc}\xymatrix{\left(S^2,{\rm{P}}_f\right)\ar[r]^{\phi'}\ar[d]_f &\left(\hat{\Bbb{C}},\phi'\left({\rm{P}}_f\right)\right)\ar[d]^g\\ 
\left(S^2,{\rm{P}}_f\right)\ar[r]_\phi &\left(\hat{\Bbb{C}},\phi\left({\rm{P}}_f\right)\right).}
\end{equation}
The critical values of $g$ are $\infty,0$; and $g(\infty)=1$, $g(0)=t$. 
A normal form for such rational maps is
\begin{equation}\label{auxiliary5}
g(z)=\left(\frac{z+x}{z+y}\right)^d
\end{equation}
where $x,y\in\Bbb{C}$ satisfy $t=\left(\frac{x}{y}\right)^d$.  
In the case of the first portrait in Table \ref{Tab:bicritical}, $f(v_1)$ and $f(v_2)$ should be fixed points whereas in the other case $f(v_1)$ is mapped to $f(v_2)$ and vice versa. 
Chasing Diagram \eqref{diagram'''}, we are dealing with one of the following:
\begin{itemize}
\item $g(1)=1$ and $g(t')=t=\left(\frac{x}{y}\right)^d$; that is, $\frac{1+x}{1+y}$ and 
$\frac{\frac{t'+x}{t'+y}}{\frac{x}{y}}$ are $d^{\rm{th}}$ roots of unity;
\item $g(t')=1$ and $g(1)=t=\left(\frac{x}{y}\right)^d$; that is,   $\frac{t'+x}{t'+y}$ and 
$\frac{\frac{1+x}{1+y}}{\frac{x}{y}}$ are $d^{\rm{th}}$  roots of unity.
\end{itemize}
We denote the $d^{\rm{th}}$ roots of unity in each of the cases above by $\lambda$ and $\lambda'$; they are invariants of the Hurwitz class of $f$ (compare with \cite[Example 2.10]{MR3107522}).
In view of the above and $t,t'\in\Bbb{C}-\{0,1\}$, we conclude that $\mathcal{W}_f$ is an affine curve $C_{d,\lambda,\lambda'}$ which is defined as follows in the cases above:
\footnotesize
\begin{itemize}
\item $C_{d,\lambda,\lambda'}:=\left\{(x,y,t')\in\Bbb{C}^3\,\big|\, 1+x=\lambda(1+y),y(t'+x)=\lambda'x(t'+y)
\text{ where }t'\neq 0,1\text{ and }x,y\neq 0\text{ and }\left(\frac{x}{y}\right)^d\neq 1\right\}$;
\item $C_{d,\lambda,\lambda'}:=\left\{(x,y,t')\in\Bbb{C}^3\,\big|\, t'+x=\lambda(t'+y),y(1+x)=\lambda'x(1+y)
\text{ where }t'\neq 0,1\text{ and }x,y\neq 0\text{ and }\left(\frac{x}{y}\right)^d\neq 1\right\}$.
\end{itemize}
\normalsize
In conclusion, the correspondence associated with $f$ can be written as  
\begin{equation}\label{bicritical correspondence}
\xymatrixcolsep{7pc}\xymatrix{\Bbb{H}\cong\mathcal{T}\left(S^2,{\rm{P}}_f\right)\ar[dd]_\pi \ar[rd]^{\omega_f}  \ar[rr]^{\sigma_f}& &\mathcal{T}\left(S^2,{\rm{P}}_f\right)\cong\Bbb{H}\ar[dd]^\pi\\
&C_{d,\lambda,\lambda'} \ar[ld]^{ \rho_2: (x,y,t')\mapsto t=\left(\frac{x}{y}\right)^d}\ar[rd]_{\rho_1:(x,y,t')\mapsto t'\phantom{ssssss}}&\\
\Bbb{C}-\{0,1\}\cong\mathcal{M}_{0,{\rm{P}}_f} & &\mathcal{M}_{0,{\rm{P}}_f}\cong\Bbb{C}-\{0,1\}.}
\end{equation}
Notice that in both situations one should have $\lambda,\lambda'\neq 1$ for the map \eqref{auxiliary5}  to be of degree $d$
as otherwise the defining equations of $C_{d,\lambda,\lambda'}$  imply $x=y$. 
For  $\lambda,\lambda'\neq 1$, one can easily use the description of $C_{d,\lambda,\lambda'}$ provided before
to  check that in the commutative diagram \eqref{bicritical correspondence} $\rho_1$ is generically two-to-one; hence $\sigma_f$ is not constant. 
\end{proof}

\subsection{Cubic Thurston maps}
\begin{table}[hbt!]
\begin{tabular}{|p{6.5cm}|p{6cm}|}
\hline
\vspace{1mm}
\xymatrixcolsep{3pc}\xymatrix
{\underset{v_3}{\bullet}\ar[r] & \underset{t}{\bullet}\ar@(dr,ur)& \underset{v_1}{\bullet}\ar@(dr,ur) & \underset{v_2}{\bullet}\ar@(dr,ur) }
&\vspace{1mm}
\xymatrixcolsep{3pc}\xymatrix
{\underset{v_3}{\bullet}\ar[r] & \underset{t}{\bullet}\ar@(dr,ur)& \underset{v_1}{\bullet}\ar@/^/[r]  & \underset{v_2}{\bullet}\ar@/^/[l] }
\\
\hline
\vspace{1mm}
\xymatrixcolsep{3pc}\xymatrix
{\underset{v_3}{\bullet}\ar[r] & \underset{t}{\bullet}\ar@/^/[r]& \underset{v_1}{\bullet}\ar@/^/[l] & \underset{v_2}{\bullet}\ar@(dr,ur) }
&\vspace{1mm}\xymatrixcolsep{3pc}\xymatrix
{\underset{v_3}{\bullet}\ar[r] & \underset{t}{\bullet}\ar@/^/[r]& \underset{v_1}{\bullet}\ar@/^/[r] & \underset{v_2}{\bullet}\ar@/^1pc/[ll] }
\\
\hline
\vspace{1mm}
\xymatrixcolsep{3pc}\xymatrix
{\underset{v_3}{\bullet}\ar[r] & \underset{v_1}{\bullet}\ar[r]& \underset{t}{\bullet}\ar@(dr,ur) & \underset{v_2}{\bullet}\ar@(dr,ur) }
&\vspace{1mm}\xymatrixcolsep{3pc}\xymatrix
{\underset{v_3}{\bullet}\ar[r] & \underset{v_1}{\bullet}\ar[r]& \underset{t}{\bullet}\ar@/^/[r]& \underset{v_2}{\bullet}\ar@/^/[l]}
\\
\hline
\multicolumn{2}{|c|}{\xymatrixcolsep{3pc}\xymatrix
{\underset{v_3}{\bullet}\ar[r] & \underset{v_1}{\bullet}\ar[r]& \underset{v_2}{\bullet}\ar[r]&\underset{t}{\bullet}\ar@(dr,ur)}
}
\rule{0pt}{3ex}\\
\hline
\end{tabular}
\vspace{2mm}
\caption{In the proof of Proposition \ref{cubic}, we first narrow down possibilities for the postcritical dynamics  of a Thurston map $f$ with $\deg f=3$ and constant $\sigma_f$ to the portraits shown in this table. There are four postcritical points: the critical values $v_1,v_2,v_3$ and another point denoted by $t$ here. The critical point above $v_3$ is totally ramified while those above $v_1,v_2$ are nondegenerate. As illustrated here, there are seven possible postcritical portraits up to swapping $v_1$ and $v_2$.}
\label{Tab:cubic}
\end{table}
Utilizing the results obtained so far, we shall prove Theorem \ref{bicritical or cubic} in this final subsection. 
\begin{proposition}\label{cubic}
For Thurston maps of degree three with at least four postcritical points, the pullback is never constant.
\end{proposition}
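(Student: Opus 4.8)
The plan is to argue by contradiction, first pinning down the ramification and the postcritical dynamics of $f$ using the results of \S3.3, and then eliminating the surviving configurations with Proposition \ref{Belyi} together with the explicit Hurwitz-correspondence computation of Proposition \ref{bicritical}. So suppose $f$ is a Thurston map of degree $d=3$ with $|{\rm{P}}_f|\geq 4$ and $\sigma_f$ constant. Since a fiber of a degree-$3$ map contains at most one critical point, $f$ has exactly $|{\rm{V}}_f|$ critical points, and Riemann-Hurwitz ($\sum_{c}({\rm{mult}}_c(f)-1)=2(d-1)=4$) leaves three possibilities: two totally ramified critical points ($|{\rm{V}}_f|=2$), one totally ramified and two simple ($|{\rm{V}}_f|=3$), or four simple critical points ($|{\rm{V}}_f|=4$). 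The case $|{\rm{V}}_f|=2$ is precisely that of a bicritical map and is excluded by Proposition \ref{bicritical}, so $|{\rm{V}}_f|\in\{3,4\}$.

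The next step is to read off constraints on the functional graph $\mathcal{G}_f$ of $f\restriction_{{\rm{P}}_f}$, in which every vertex has outdegree $1$, so the total indegree equals $|{\rm{P}}_f|$. By Corollary \ref{main corollary}, $|{\rm{P}}_f|\leq\min\{2|{\rm{V}}_f|,d+2\}=\min\{2|{\rm{V}}_f|,5\}$; by the refined form of Theorem \ref{degree bound} (cf.\ Corollary \ref{ell2 corollary}) a \emph{simple} critical value cannot have indegree $0$ in $\mathcal{G}_f$, since otherwise its fiber would be disjoint from ${\rm{P}}_f$ while carrying exactly one critical point, itself simple, whence $|{\rm{P}}_f|\leq 1+2=3$; and by Corollary \ref{main corollary} every postcritical point that is not a critical value has indegree $\geq 2$. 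A short count with these three facts rules out $|{\rm{P}}_f|=5$ in both cases. When $|{\rm{V}}_f|=3$ one is left with $|{\rm{P}}_f|=4$; writing ${\rm{P}}_f=\{v_1,v_2,v_3,t\}$ with $v_3$ the totally ramified critical value, $v_1,v_2$ the simple ones and $t$ the unique non-critical-value postcritical point, the indegree sequence is forced to be $(1,1,0,2)$. In particular exactly two of the four vertices map into ${\rm{V}}_f$, so $|{\rm{P}}_f\cap f^{-1}({\rm{V}}_f)|=2$ (enumerating the admissible graphs up to the symmetry $v_1\leftrightarrow v_2$ yields precisely the seven portraits of Table \ref{Tab:cubic}). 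When $|{\rm{V}}_f|=4$ the same count forces $|{\rm{P}}_f|=4$, ${\rm{P}}_f={\rm{V}}_f$, and $f\restriction_{{\rm{P}}_f}$ a permutation; I defer this case.

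For $|{\rm{V}}_f|=3$ I would now apply Proposition \ref{Belyi} to $f:(S^2,{\rm{P}}_f)\rightarrow(S^2,{\rm{P}}_f)$: it has exactly three critical values, $|{\rm{P}}_f|=4\geq 4$, and $f({\rm{P}}_f)\nsubseteq{\rm{V}}_f$ because $t\in f({\rm{P}}_f)\setminus{\rm{V}}_f$. Since $|{\rm{P}}_f\cap f^{-1}({\rm{V}}_f)|=2$, the proposition produces a nontrivial deck transformation of the branched cover $f$. But ${\rm{Deck}}(f)$ has order dividing $d=3$, so a nontrivial deck transformation forces $f$ to be a regular $\Bbb{Z}/3$-cover, hence (topologically) equivalent, up to pre- and post-composition, to the power map $z\mapsto z^3$, which has only two critical values --- contradicting $|{\rm{V}}_f|=3$. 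This eliminates all seven portraits simultaneously.

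It remains to dispose of the case $|{\rm{V}}_f|=4$, where ${\rm{P}}_f={\rm{V}}_f$, the four critical points are simple, and $f$ permutes ${\rm{P}}_f$ --- so $f$ is a cubic NET map with all critical values periodic and Proposition \ref{Belyi} no longer applies. Here I would mimic Proposition \ref{bicritical}: for each cycle type of $f\restriction_{{\rm{P}}_f}$, put the rational maps $g$ that are Hurwitz-equivalent to $f$ in a normal form (placing one critical point of $g$ at $\infty$ and normalizing three of the marked points), describe the Hurwitz space $\mathcal{W}_f$ as an explicit affine curve, and compute the projection $\rho_A:\mathcal{W}_f\rightarrow\mathcal{M}_{0,{\rm{P}}_f}\cong\Bbb{C}-\{0,1\}$, which records the cross-ratio of the four postcritical preimage-positions in the source of $g$; one then checks that it is non-constant (in fact generically finite). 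Since $\sigma_f$ is constant if and only if $\rho_A$ is, this is the desired contradiction. I expect this last step to be the main obstacle: unlike the bicritical situation there is no single uniform normal form, so the computation must be carried out cycle type by cycle type, and verifying non-constancy of $\rho_A$ in each instance --- essentially showing that the cross-ratio of the preimage positions genuinely moves as $g$ varies over the one-dimensional Hurwitz family --- is where the real work lies.
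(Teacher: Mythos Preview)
Your argument for the cases $|{\rm{V}}_f|\in\{2,3\}$ is correct and follows the paper almost verbatim: the same Riemann--Hurwitz trichotomy, the same indegree count forcing $|{\rm{P}}_f|=4$ and the indegree pattern $(1,1,0,2)$ when $|{\rm{V}}_f|=3$, the same appeal to Proposition~\ref{Belyi} to produce a nontrivial deck transformation, and the same observation that a degree-$3$ cover with three critical values has trivial deck group.

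The only substantive divergence is your treatment of the case $|{\rm{V}}_f|=4$ (four simple critical points, ${\rm{P}}_f={\rm{V}}_f$, $f\restriction_{{\rm{P}}_f}$ a permutation). You correctly identify this as a cubic NET map and propose a cycle-type-by-cycle-type Hurwitz-space computation, flagging it as ``the main obstacle.'' In the paper this case is not computed at all: it is disposed of in one line by citing \cite[Theorem~10.12]{MR2958932}, which states that no NET map of odd squarefree degree has constant pullback. So what you anticipate as the hard residual case is already a known theorem, and your proposal, while workable in principle, reinvents a wheel whose existence the paper has already recorded in \S2.2. Once you replace your deferred computation by that citation, your proof and the paper's are the same.
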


\begin{proof}
Let $f$ be a Thurston map of degree three with $|{\rm{P}}_f|\geq 4$ for which $\sigma_f$ is constant.
First, suppose all critical points of $f$ are simple. If not all critical values are periodic, then one may pick a critical value $v$ whose preimage is disjoint from ${\rm{P}}_f$. The critical point above $v$ is simple and now Corollary \ref{ell2 corollary} implies that $\sigma_f$ is not constant. Thus all four critical values are periodic. Invoking Corollary \ref{periodic bound},  $\sigma_f$ is constant only if ${\rm{P}}_f={\rm{V}}_f$. But then $|{\rm{P}}_f|=4$ and $f$ is a NET map of degree three. It is known that the pullback for such a map cannot be constant \cite[Theorem 10.12]{MR2958932}, hence a contradiction.\\
\indent
Next, assume that $f$ admits a critical point at which the local degree is three. If there are two such points, then $f$ is bicritical and we have already seen in Proposition \ref{bicritical} that the pullback cannot be constant for such maps. So suppose the rest of the critical points are simple. Thus $f$ has a point of multiplicity three and two points of multiplicity two. We denote the critical values for the latter by $v_1$ and $v_2$ and the critical value for the former by $v_3$. Applying Corollary \ref{periodic bound} again, it is not possible for all of them to be periodic. Thus at least one of them is outside $f({\rm{P}}_f)$ as otherwise ${\rm{P}}_f=f({\rm{P}}_f)$ and every postcritical point is periodic. 
On the other hand, $v_1$ and $v_2$ must lie in $f({\rm{P}}_f)$ due to Corollary \ref{ell2 corollary}. 
We conclude that $v_3\notin f({\rm{P}}_f)$ while $v_1,v_2\in f({\rm{P}}_f)$. 
So if, as in the proof of Proposition \ref{combinatorics}, we consider the functional graph $\mathcal{G}_f$ of $f\restriction_{{\rm{P}}_f}:{\rm{P}}_f\rightarrow {\rm{P}}_f$, the indegrees of $v_1$ and $v_2$ are positive while that of $v_3$ is zero; and any other vertex is in ${\rm{P}}_f-{\rm{V}}_f$ so its indegree is at least two. We thus have:
\begin{equation*}
\begin{split}
|{\rm{P}}_f|&=\sum_{p\in {\rm{P}}_f}\deg^-(p)=\deg^-(v_1)+\deg^-(v_2)+\deg^-(v_3)+\sum_{p\in {\rm{P}}_f-{\rm{V}}_f}\deg^-(p)\\
&\geq 1+1+0+2(|{\rm{P}}_f|-3)=2|{\rm{P}}_f|-4.
\end{split}
\end{equation*}
But $|{\rm{P}}_f|\geq 4$; so we should have equality: aside from $v_1,v_2,v_3$, there is only one other postcritical point which we denote by $t$. As vertices of $\mathcal{G}_f$, their indegrees are $1,1,0$ and $2$ respectively. Possible functional graphs $\mathcal{G}_f$ (up to interchanging $v_1$ and $v_2$) are illustrated in Table \ref{Tab:cubic}.
One can directly check that the pullback is not constant in any of these cases by writing down the Hurwitz correspondence. A more elegant method is to apply Proposition \ref{Belyi} to the branched cover 
$f:\left(S^2,{\rm{P}}_f\right)\rightarrow\left(S^2,{\rm{P}}_f\right)$
which has precisely three critical values and satisfies $|{\rm{P}}_f\cap f^{-1}({\rm{V}}_f)|=2$: the pullback map $\sigma_f$ is constant only if $f$ admits a nontrivial deck transformation. But clearly no branched covering $f:S^2\rightarrow S^2$ of degree three with three critical values can admit a nontrivial deck transformation.
\end{proof}

\begin{proof}[Proof of Theorem \ref{bicritical or cubic}]
The result follows from Propositions \ref{bicritical} and \ref{cubic}.
\end{proof}

\noindent
\textbf{Acknowledgment.} The  author is grateful to Sarah Koch for many insightful conversations. I also would like to thank Bill Floyd, Walter Parry and Arturo Saenz for helpful comments and informing me of \cite{MRParry1} and \cite{MRParry2}.

\bibliographystyle{alpha}
\bibliography{bibcomprehensive}

\end{document}